\newtheorem{theo}{Theorem}
\newtheorem{defi}[theo]{Definition}
\newtheorem{lem}[theo]{Lemma}
\newcommand{\R}{\mathbb{R}} 
\newcommand{\C}{\mathbb{C}} 
\newcommand{\Z}{\mathbb{Z}} 
\newcommand{\N}{\mathbb{N}} 
\newcommand{\Sone}{{\mathbb{S}^1}} 
\newcommand{\U}{\mathcal{U}}   
\newcommand{\Proj}{\mathbb{P}} 
\newcommand{\Hil}{\mathcal{H}} 
\newcommand{\Opr}{{\mathbb{O}_\Theta}} 
\newcommand{\ol}[1]{\overline{#1}}
\newcommand{\wt}[1]{\widetilde{#1}}
\newcommand{\wh}[1]{\widehat{#1}}
\newcommand{\one}{\mathbf{1}}
\newcommand{\supp}{\textnormal{supp}}
\newcommand{\ad}{\dagger}
\newcommand{\Ran}{\textnormal{Ran}}
\newcommand{\Ker}{\textnormal{Ker}}
\title{Reconstructing group wavelet transform from feature maps\\ with a reproducing kernel iteration}
\author{Davide Barbieri}
\begin{document}

\maketitle

\begin{abstract}

In this paper we consider the problem of reconstructing an image that is downsampled in the space of its $SE(2)$ wavelet transform, which is motivated by classical models of simple cells receptive fields and feature preference maps in primary visual cortex. We prove that, whenever the problem is solvable, the reconstruction can be obtained by an elementary project and replace iterative scheme based on the reproducing kernel arising from the group structure, and show numerical results on real images.
\end{abstract}

\section{Introduction}\label{s:intro}

This paper introduces an iterative scheme for solving a problem of image reconstruction, motivated by the classical behavior of primary visual cortex (V1), in the setting of group wavelet analysis. The mathematical formulation of the problem is that of the reconstruction of a function on the plane which, once represented as a function on the group $SE(2) = \R^2 \rtimes \Sone$ of rotations and translations of the Euclidean plane via the group wavelet transform, is known only on a certain two dimensional subset of this three dimensional group. This problem is equivalent to that of filling in missing information related to a large subset of the $SE(2)$ group, and ultimately inquires about the completenss of the image representation provided by feature maps observed in V1.

One of the main motivations for the present study comes indeed from neuroscience, and the modeling of classical receptive fields of simple cells in terms of group actions, restricted to feature maps such as the orientation preference maps. The attempts of modeling mathematically the measured behavior of brain's primary visual cortex (V1) have led to the introduction of the linear-nonlinear-Poisson (LNP) model \cite{Carandini05}, which define what is sometimes called the classical behavior. It describes the activity of a neuron in response to a visual stimulus as a Poisson spiking process driven by a linear operation on the visual stimulus, modeled by the receptive field of the neuron, passed under a sigmoidal nonlinearity. A series of thorough studies of single cell behavior could find a rather accurate description of the receptive fields of a large amount of V1 cells, called simple cells, within the LNP model, in terms of integrals against Gabor functions located in a given position of the visual field \cite{Marcelja80, Ringach02}. This description formally reduces the variability in the classical behavior of these cells to a few parameters, regulating the position on the visual field, the size, the shape and the local orientation of a two dimensional modulated Gaussian. A slightly simplified description of a receptive field activity $F$ in response to a visual stimulus defined by a real function $f$ on the plane, representing light intensity, is the following: denoting by $x = \binom{x_1}{x_2}, y = \binom{y_1}{y_2} \in \R^2$, 
\begin{equation}\label{eq:receptive}
F = s \sqrt{2 \pi} \int_{\R^2} f(y) e^{-2 \pi i p \big( (x_1 - y_1) \cos \theta + (x_2 - y_2) \sin\theta \big)} e^{-2\pi^2 s^2 |x - y|^2} dy
\end{equation}
where the parameters $s, p \in \R^+$ define the local scale and spatial frequency, the angle $\theta \in [0,2\pi)$ defines the local direction and $x \in \R^2$ define the local position of the receptive field in the visual field, while the complex formulation can be formally justified by the prevalence of so-called even and odd cells \cite{Ringach02}. We will focus on on the sole parameters $x,\theta$. This can be considered restrictive \cite{HubelWiesel62, SCP08, Barbieri15}, but it is nevertheless interesting since angles represent a relevant local feature detected by V1 \cite{HubelWiesel62} whose identification has given rise to effective geometrical models of perception \cite{PT99, CS06, CS15}. In this case one can model the linear part of the classical behavior of simple cells in terms of a well-known object in harmonic analysis: by rephrasing \eqref{eq:receptive} as
$$
F = F(x,\theta)
$$
for fixed values of $p,s$ we obtain a $SE(2)$ group wavelet transform of $f$ (see \S \ref{s:continuous}). On the other hand, classical experiments with optical imaging have shown that not all parameters $\theta \in \Sone$ are available for these cortical operations \cite{BonhofferGrinvald91, Welicky96, BoskingFitzpatrick97, White07}: in many mammals, a pinwheel-shaped function $\Theta : \R^2 \to \Sone$ determines the available angles. This feature map can be introduced in the model \eqref{eq:receptive} by saying that the receptive fields in V1 record the data
\begin{equation}\label{eq:data}
\{F(x,\Theta(x)) : x \in \R^2\}.
\end{equation}
Within this setting, a natural question is thus whether this data actually contains all the sufficient information to reconstruct the original image $f$, and how can we reobtain $f$ from these data. This is the main problem we aim to address.

Before proceding, we recall that a severe limitation of the purely spatial model \eqref{eq:receptive} is that of disregarding temporal behaviors \cite{DeAngelis95, Cocci12}. Moreover, the classical behavior described by the LNP model is well-known to be effective only up to a limited extent: several other mechanisms are present that describe a substantial amount of the neural activity in V1, such as Carandini-Heeger normalization \cite{CarandiniHeeger12}, the so-called non-classical behaviors \cite{Fitzpatrick00, Carandini05}, as well as the neural connectivity \cite{Angelucci02}. However, spatial receptive fields and the LNP provide relevant insights on the functioning of V1 \cite{Ringach02}. Moreover, the ideas behind the LNP model have been a main source of inspiration in other disciplines, notably for the design of relevant mathematical and computational theories, such as wavelets and convolutional neural networks \cite{Marr, LeCun10}. We also point out that the use of groups and invariances to describe the variability of the neural activity has proved to be a solid idea to build effective models \cite{PetitotBook, CS06, PA14}, whose influence on the design of artificial learning architectures is still very strong \cite{AERP19, MBCSx, Anselmi20, Duits21},

Another motivation for studying the completeness of the data collected with \eqref{eq:data} comes from the relationship of this problem with that of sampling in reproducing kernel Hilbert spaces (RKHS). The RKHS structure in this case is that of the range the group wavelet transform, and will be discussed in detail, together with the basics on the $SE(2)$ wavelet transform, in \S \ref{s:continuous}. Reducing the number of parameters in a wavelet transform is a common operation, that one performs e.g. for discretization purposes \cite{Daubechies}, or because it is useful to achieve square-integrability \cite{AAG91}. The main issue is that these operations are typically constrained nontrivially in order to retain the sufficient information that allows one to distinguish all interesting signals. For example, when discretizing the short-time Fourier transform to obtain a discrete Gabor Transform, the well-known density theorem \cite{Heil07} imposes a minimum density of points in phase space where the values of the continuous transform must be known in order to retain injectivity for signals of finite energy. Much is known on this class of problems in the context of sampling, from classical Shannon's theorem and uncertainty-related signal recovery results \cite{DonohoStark89}, to general sampling results in RKHS \cite{FGHKR17, GRS18}. However, the kind of restriction implemented by feature maps does not seem to fit into these settings, even if some similarities may be found between the pinwheel-shaped orientation preference maps \cite{BonhofferGrinvald91} and Meyer's quasicrystals, which have been recently used for extending compressed sensing results \cite{CandesTao06, MateiMeyer2010, AACM19}.

The plan of the paper is the following. In \S \ref{s:continuous}, after a brief overview of the $SE(2)$ transform and its RKHS structure, we will formalize in a precise way the problem of functional reconstruction after the restriction \eqref{eq:data} has been performed. In \S \ref{s:algorithm} we will introduce a technique to tackle with this problem, given by an iterative kernel method based on projecting the restricted $SE(2)$ wavelet transform onto the RKHS defined by the group representation. Moreover, we will consider a discretization of the problem and, in the setting of finite dimensional vector spaces, we will give a proof that the proposed iteration converges to the desired solution. Finally, in \S \ref{s:numerics} we will show and discuss numerical results on real images.

\newpage
\section{Overview of the $SE(2)$ transform}\label{s:continuous}

The purpose of this section is to review the fundamental notions of harmonic analysis needed to provide a formal statement of the problem. We will focus on the group wavelet transform defined by the action on $L^2(\R^2)$ of the group of rotations and translation of the Euclidean plane, expressed as a convolutional integral transform.

We will denote the Fourier transform of $f \in L^1(\R^2) \cap L^2(\R^2)$ by
$$
\wh{f}(\xi) = \int_{\R^2} e^{-2\pi i x.\xi} f(x) dx
$$
and, as customary, we will use the same notation for its extension by density to the whole $L^2(\R^2)$. We will also denote by $\ast$ the convolution on $\R^2$:
$$
f \ast g (x) = \int_{\R^2} f(y) g(x - y) dy.
$$

Let $\Sone$ be the abelian group of angles of the unit circle, which is isomorphic either to the one dimensional torus $\mathbb{T} = [0,2\pi)$ or to the group $SO(2)$ of rotations of the plane $\R^2$. The group $SE(2) = \R^2 \rtimes \Sone$ is the semidirect product group with elements $(x,\theta) \in \R^2 \times \Sone$ and composition law
$$
(x,\theta) \cdot (x',\theta') = (x + r_\theta x', \theta + \theta').
$$
Its Haar measure, that is the (Radon) measure on the group that is invariant under group operations, is the Lebesgue measure on $\R^2 \rtimes \Sone$.

A standard way to perform a wavelet analysis with respect to the $SE(2)$ group on two dimensional signals is given by the operator defined as follows.

\begin{defi}\label{def:se2trans}
Let us denote by $R : \Sone \to \U(L^2(\R^2))$ the unitary action by rotations of $\Sone$ on $L^2(\R^2)$:
$$
R(\theta)f(x) = f(r_\theta^{-1}x) \, , \ f \in L^2(\R^2) \, , \ \theta \in \Sone \, , \ \textnormal{ where }
r_\theta = \left(\begin{array}{cc}
\cos\theta & -\sin\theta\\
\sin\theta & \cos\theta
\end{array}\right).
$$
Let $\psi \in L^2(\R^2)$, and denote by $\psi_\theta = R(\theta)\psi$. The $SE(2)$ wavelet transform on $L^2(\R^2)$ with mother wavelet $\psi$ is
\begin{equation}\label{eq:W}
W_\psi f (x,\theta) = f \ast \psi_\theta (x) \, , \ f \in L^2(\R^2). 
\end{equation}
\end{defi}

In terms of this definition, we can then see that if we choose $s, p \in \R^+$, and let $\psi_{s,p} \in L^2(\R^2)$ be
\begin{equation}\label{eq:psi}
\psi_{s,p}(x) = s \sqrt{2 \pi} e^{-2 \pi i p x_1} e^{-2\pi^2 s^2 |x|^2},
\end{equation}
we can write \eqref{eq:receptive} as $F = W_{\psi_{s,p}} f (x, \theta)$.

Moreover, by making use of the quasiregular representation of the $SE(2)$ group
\begin{equation}\label{eq:quasiregular}
\Pi(x,\theta) f(y) = f(r_\theta^{-1}(y - x)) \, , \ f \in L^2(\R^2)
\end{equation}
and denoting by $\psi^\ad(x) = \ol{\psi(-x)}$, we can rewrite \eqref{eq:W} as follows:
$$
W_\psi f(x,\theta) = \langle f, \Pi(x,\theta)\psi^\ad \rangle_{L^2(\R^2)}
$$
which is a standard form to write the so-called group wavelet transform (see e.g. \cite{Fuhr, DeitmarEchterhoff}). Note that, in the interesting case \eqref{eq:psi}, we have $\psi_{s,p}^\ad = \psi_{s,p}$.

The $SE(2)$ transform \eqref{eq:W}, together with the notion of group wavelet transform, has been studied in multiple contexts (see e.g. \cite{Weiss01, Antoine2D, Fuhr, Duits07, DeitmarEchterhoff, DDGL} and references therein), and several of its properties are well-known. In particular, if $W_\psi f$ is a bounded operator from $L^2(\R^2)$ to $L^2(\R^2 \times \Sone)$, which happens e.g. for $\psi \in L^1(\R^2) \cap L^2(\R^2)$ by Young's convolution inequality and the compactness of $\Sone$, it is easy to see that its adjoint reads
\begin{equation}\label{eq:adjoint}
W_\psi^* F (x) = \int_\Sone F(\cdot,\theta) \ast \psi_\theta^\ad\, (x) \,d\theta.
\end{equation}

It is also well-known \cite{Weiss01} that $W_\psi f$ can not be injective on the whole $L^2(\R^2)$, that is, we can not retrieve an arbitrary element of $L^2(\R^2)$ by knowing its $SE(2)$ transform. However, as shown in \cite{Duits04, Duits07}, and applied successfully in a large subsequent series of works (e.g. \cite{Duits10, Duits16, Duits21}), it is possible to obtain a bounded invertible transform by extending the notion of $SE(2)$ transform to mother wavelets $\psi$ that do not belong to $L^2(\R^2)$, or by simplifying the problem and reduce the wavelet analysis to the space of bandlimited functions, that are those functions whose Fourier transform is supported on a bounded set, whenever a Calder\'on's admissibility condition holds. Since our main point in this paper is not the reconstruction of the whole $L^2(\R^2)$, we will consider the $SE(2)$ transform with this second, simplified, approach. In this case, the image of the $SE(2)$ transform is a reproducing kernel Hilbert subspace of $L^2(\R^2 \times \Sone)$ whose kernel will play an important role in the next section. For convenience, we formalize these statements with the next two theorems, and provide a sketch of the proof, even if they can be considered standard material.

\begin{theo}\label{th:SE2inv}
For $R > 0$, let $B_R = \{\xi \in \R^2 : |\xi| < R\}$ and let
\begin{equation}\label{eq:PaleyWiener}
\Hil_{R} = \{f \in L^2(\R^2) : \supp \wh{f} \subset B_R\}.
\end{equation}
The $SE(2)$ wavelet transform \eqref{eq:W} for a mother wavelet $\psi \in L^2(\R^2)$ is a bounded injective operator from $\Hil_{R}$ to $L^2(\R^2 \times \Sone)$ if and only if there exist two constants $0 < A \leq B < \infty$ such that
\begin{equation}\label{eq:CalderonSE2}
A \leq \int_{\Sone} |\wh{\psi}(r_\theta^{-1}\xi)|^2 d\theta \leq B 
\end{equation}
holds for almost every $\xi \in B_R$.
\end{theo}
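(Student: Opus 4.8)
The plan is to diagonalize $W_\psi$ with the Fourier transform in the spatial variable, which turns it into a multiplication operator whose symbol is exactly the left‑hand side of \eqref{eq:CalderonSE2}. First I would record two elementary facts: the Fourier transform intertwines the rotation action, $\widehat{R(\theta)\psi}(\xi) = \wh\psi(r_\theta^{-1}\xi)$ (change of variables, using that $r_\theta$ is orthogonal with unit Jacobian), and convolution becomes multiplication, so that from \eqref{eq:W}
$$
\widehat{W_\psi f(\cdot,\theta)}(\xi) \;=\; \wh f(\xi)\,\wh\psi(r_\theta^{-1}\xi)\qquad\text{for a.e. }(\xi,\theta).
$$
Applying Plancherel in $x$ for each fixed $\theta$, integrating in $\theta$, and exchanging the two integrations --- legitimate by Tonelli since the integrand is nonnegative, so no hypothesis on $\psi$ beyond $\psi\in L^2(\R^2)$ is needed --- gives the energy identity
$$
\|W_\psi f\|_{L^2(\R^2\times\Sone)}^2 \;=\; \int_{\R^2}|\wh f(\xi)|^2\, m(\xi)\,d\xi,\qquad m(\xi)\defeq\int_\Sone|\wh\psi(r_\theta^{-1}\xi)|^2\,d\theta\in[0,\infty].
$$
Restricting to $f\in\Hil_R$ the integral runs over $B_R$ only, and since $\wh f$ ranges over all of $L^2(B_R)$ as $f$ ranges over $\Hil_R$, with $\|f\|_{L^2}^2=\int_{B_R}|\wh f|^2$, the restricted transform $W_\psi|_{\Hil_R}$ is unitarily equivalent to multiplication by $\sqrt{m}$ from $L^2(B_R)$ into $L^2(B_R\times\Sone)$; equivalently, $W_\psi^*W_\psi$ acts on $\Hil_R$ as the Fourier multiplier with (rotation‑invariant) symbol $m$.

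Granting this reduction, the theorem becomes the standard characterization of when a multiplication operator is a topological embedding. The ``if'' direction is immediate: $A\le m\le B$ a.e. on $B_R$ plugged into the energy identity gives $A\|f\|^2\le\|W_\psi f\|^2\le B\|f\|^2$ for all $f\in\Hil_R$, so $W_\psi|_{\Hil_R}$ is bounded and bounded below, hence injective with closed range. For the converse I would argue by contraposition with concentrated test functions: if $m$ is not essentially bounded on $B_R$, the super‑level sets $E_n=\{\xi\in B_R: m(\xi)>n\}$ have positive measure, and $\wh f_n=|E_n|^{-1/2}\one_{E_n}$ gives $\|f_n\|=1$, $\|W_\psi f_n\|^2\ge n$, contradicting boundedness; symmetrically, if the essential infimum of $m$ over $B_R$ is $0$, the sub‑level sets $\{\xi\in B_R: m(\xi)<1/n\}$ have positive measure and the associated normalized test functions satisfy $\|W_\psi f_n\|\to0$, which contradicts $W_\psi|_{\Hil_R}$ being bounded below. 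Hence finite constants $0<A\le B<\infty$ with $A\le m\le B$ a.e. on $B_R$ must exist.

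The main point requiring care is purely book‑keeping: the interchange of integrals for an arbitrary $\psi\in L^2(\R^2)$ (resolved by carrying $m$ as a $[0,\infty]$‑valued measurable, rotation‑invariant function and invoking Tonelli, which incidentally shows $W_\psi$ already fails to be a bounded operator from $\Hil_R$ into $L^2$ when $m\notin L^\infty(B_R)$), and the elementary measure theory behind the concentration step, namely that the level sets of $m$ relative to $B_R$ have positive Lebesgue measure whenever the corresponding essential bound is violated. I would also be explicit that ``bounded injective'' is to be understood here as ``bounded with bounded inverse on its range'' (equivalently, bounded below): bare injectivity of $W_\psi|_{\Hil_R}$ corresponds only to $m>0$ a.e. on $B_R$, which is strictly weaker than the uniform lower bound in \eqref{eq:CalderonSE2}, and it is the embedding property that is needed for the reproducing kernel structure exploited in \S\ref{s:algorithm}.
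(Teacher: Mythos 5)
Your argument is correct and follows essentially the same route as the paper: Fourier--Plancherel diagonalization turns $W_\psi$ on $\Hil_R$ into multiplication by the Calder\'on symbol $m$, and the two-sided bounds on $m$ are then equivalent to the frame inequalities \eqref{eq:frames} (the paper delegates this last equivalence to a citation, whereas you prove it directly with super- and sub-level-set test functions). Your closing caveat --- that bare injectivity only forces $m>0$ a.e.\ on $B_R$, so ``bounded injective'' must be read as ``bounded with bounded inverse on its range'' --- matches the reading the paper itself adopts when it identifies the condition with $W_\psi$ being ``bounded and invertible on $\Hil_R$''.
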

\begin{proof}[Sketch of the proof]
By Fourier convolution theorem and the definition of $\Hil_R$, we have
$$
\int_\Sone \int_{\R^2} |W_\Psi f(x,\theta)|^2 dx d\theta = \int_\Theta \int_{\R^2} |\wh{\psi_\theta}(\xi)|^2 |\wh{f}(\xi)|^2 d\xi d\theta = \int_{B_R} \left(\int_\Sone |\wh{\psi_\theta}(\xi)|^2 d\theta \right) |\wh{f}(\xi)|^2 d\xi.
$$
Thus, by Plancherel's theorem, \eqref{eq:CalderonSE2} is equivalent to the so-called frame condition
\begin{equation}\label{eq:frames}
A \|f\|^2_{L^2(\R^2)} \leq \|W_\psi f\|^2_{L^2(\R^2 \times \Sone)} \leq B \|f\|^2_{L^2(\R^2)}
\end{equation}
for all $f \in \Hil_R$, which is equivalent (see e.g. \cite{AAG93}) to $W_\Psi$ being bounded and invertible on $\Hil_R$.
\end{proof}

\vspace{-2ex}

Before stating the next result we repeat an observation of \cite{Weiss01} and show, using this theorem, that the $SE(2)$ transform can not be a bounded injective operator on the whole $L^2(\R^2)$. Indeed, using that
$$
\wh{\psi_\theta}(\xi) = \int_{\R^2} e^{-2\pi i x.\xi} \psi(r_{\theta}^{-1}x) dx = \int_{\R^2} e^{-2\pi i x.(r_{\theta}^{-1}\xi)} \psi(x) dx = \wh{\psi}(r_{\theta}^{-1}\xi).
$$
we can see that the Calder\'on's function in condition \eqref{eq:CalderonSE2} is actually a radial function
\begin{equation}\label{eq:polarCalderonfunction}
C_\psi(\xi) = \int_{\Sone} |\wh{\psi}(r_\theta^{-1}\xi)|^2 d\theta = \int_\Sone |\wh{\psi}(|\xi|\cos\varphi,|\xi|\sin\varphi)|^2 d\varphi = C_\psi(|\xi|)
\end{equation}
which, by Plancherel's theorem, satisfies
$
\int_0^\infty C_\psi(\rho) \rho d\rho = \|\psi\|_{L^2(\R^2)}^2.
$
Hence, the lower bound in condition \eqref{eq:CalderonSE2} can not be satisfied on the whole $\R^2$ by any $\psi \in L^2(\R^2)$.

On the other hand, for the mother wavelet \eqref{eq:psi}, since $\displaystyle \wh{\psi}(\xi) = \frac{1}{s \sqrt{2 \pi}} e^{-|\xi + \binom{p}{0}|^2/2 s^2}$, we have
$$
C_\psi(\xi) = \int_\Sone |\wh{\psi_\theta}(r_\theta^{-1}\xi)|^2 d\theta = \frac{1}{2 \pi s^2} \int_\Sone e^{-|\xi + \binom{p\cos\theta}{p\sin\theta}|^2/s^2} d\theta = \frac{e^{-\frac{|\xi|^2 + p^2}{s^2}}}{2 \pi s^2} \int_\Sone e^{-\frac{|\xi| p}{s^2}\cos\alpha} d\alpha.
$$
From here we can see that $C_\psi(\xi) > 0$ for all $\xi \in \R^2$, so, even if $C_\psi(\xi) \to 0$ as $|\xi| \to \infty$, we have that the lower bound in \eqref{eq:CalderonSE2}
is larger than zero for any finite $R$.

The next theorem shows how to construct the inverse of the $SE(2)$ wavelet transform on bandlimited functions, and what is the structure of the closed subspace defined by its image.

\begin{theo}\label{th:SE2proj}
Let $\psi \in L^2(\R^2)$ and $R > 0$ be such that \eqref{eq:CalderonSE2} holds. Let also $\gamma \in L^2(\R^2)$ be defined by
\begin{equation}\label{eq:gamma}
\wh{\gamma}(\xi) = \frac{\chi_{B_R}(\xi)}{C_\psi(\xi)} \, \wh{\psi}(\xi) 
\end{equation}
where $\displaystyle \chi_{B_R}(\xi) = \left\{\begin{array}{cl}1 & \xi \in B_R\\ 0 & \textnormal{otherwise} \end{array}\right.$, and let $\Hil_R$ be as in \eqref{eq:PaleyWiener}. Then
\begin{itemize}
\item[(i)] For all $f \in \Hil_R$ it holds $W_\gamma^* W_\psi f = f$.
\item[(ii)] The space $W_\psi(\Hil_R)$ is a reproducing kernel Hilbert subspace of continuous functions of $L^2(\R^2 \times \Sone)$, and the orthogonal projection $\Proj$ of $L^2(\R^2 \times \Sone)$ onto $W_\psi(\Hil_R)$ is
\begin{equation}\label{eq:RKHSproj}
\Proj F (x,\theta) = W_\psi W_\gamma^* F (x,\theta) = \int_\Sone F(\cdot,\theta') \ast \psi_\theta \ast \gamma_{\theta'}^\ad (x) \, d\theta' \ , \quad F \in L^2(\R^2 \times \Sone).
\end{equation}
\end{itemize}
\end{theo}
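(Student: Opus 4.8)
The plan is to carry out both parts on the Fourier side, where $W_\psi$, $W_\gamma$ and their adjoints become fibered multiplication operators, and to exploit that $C_\psi$ is radial, hence $r_\theta$-invariant, as recorded in \eqref{eq:polarCalderonfunction}.

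\textbf{Part (i).} First I would record the two preliminary facts that keep the manipulations legitimate despite $\psi,\gamma$ being merely $L^2$: from \eqref{eq:gamma} one has $|\wh\gamma|\le A^{-1}|\wh\psi|\,\chi_{B_R}$, so $\gamma\in L^2(\R^2)$, and moreover $C_\gamma(\xi)=\chi_{B_R}(\xi)/C_\psi(\xi)\le A^{-1}$ for a.e.\ $\xi\in\R^2$; by the computation in the proof of Theorem~\ref{th:SE2inv} this makes $W_\gamma$ a bounded operator $L^2(\R^2)\to L^2(\R^2\times\Sone)$, so $W_\gamma^*$ is bounded and given by \eqref{eq:adjoint}. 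Then, using $\wh{\psi_\theta}(\xi)=\wh\psi(r_\theta^{-1}\xi)$, $\wh{\gamma_\theta^\ad}(\xi)=\overline{\wh\gamma(r_\theta^{-1}\xi)}$ and the Fourier convolution theorem,
\[
\wh{W_\gamma^*W_\psi f}(\xi)=\wh f(\xi)\int_\Sone \wh\psi(r_\theta^{-1}\xi)\,\overline{\wh\gamma(r_\theta^{-1}\xi)}\,d\theta .
\]
Since $\chi_{B_R}$ and $C_\psi$ are radial, $\overline{\wh\gamma(r_\theta^{-1}\xi)}=\big(\chi_{B_R}(\xi)/C_\psi(\xi)\big)\,\overline{\wh\psi(r_\theta^{-1}\xi)}$, so the integral equals $\chi_{B_R}(\xi)\,C_\psi(\xi)/C_\psi(\xi)=\chi_{B_R}(\xi)$; as $\supp\wh f\subset B_R$ this is $\wh f(\xi)$, whence $W_\gamma^*W_\psi f=f$.

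\textbf{Part (ii).} That $W_\psi(\Hil_R)$ is closed follows either from the lower bound in \eqref{eq:frames} (which makes $W_\psi$ bounded below on the closed space $\Hil_R$) or from the bounded left inverse produced in (i). To see it is a reproducing kernel subspace I would write $W_\psi f(x,\theta)=\langle f,\Pi(x,\theta)\psi^\ad\rangle_{L^2(\R^2)}$: strong continuity of the quasiregular representation gives a continuous representative for every element, and Cauchy--Schwarz together with the lower bound in \eqref{eq:frames} yields $|W_\psi f(x,\theta)|\le\|\psi\|_{L^2(\R^2)}\|f\|_{L^2(\R^2)}\le A^{-1/2}\|\psi\|_{L^2(\R^2)}\|W_\psi f\|_{L^2(\R^2\times\Sone)}$, i.e.\ point evaluations are bounded on $W_\psi(\Hil_R)$ --- exactly the RKHS property. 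For the projection, put $P=W_\psi W_\gamma^*$. Because $\supp\wh\gamma\subset B_R$ forces $W_\gamma^*F\in\Hil_R$, the composition is well defined, $\Ran P\subseteq W_\psi(\Hil_R)$, and by (i) $P\,W_\psi f=W_\psi f$, so $P$ is idempotent with range exactly $W_\psi(\Hil_R)$. Running the multiplier computation of (i) in reverse shows $W_\gamma f=W_\psi g$ with $\wh g=\wh f/C_\psi$ for $f\in\Hil_R$, and $g$ varies over all of $\Hil_R$; hence $\Ran W_\gamma=W_\psi(\Hil_R)$ and, $W_\psi$ being injective on $\Hil_R$, $\Ker P=\Ker W_\gamma^*=(\Ran W_\gamma)^\perp=W_\psi(\Hil_R)^\perp$. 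A bounded projection whose range and kernel are mutually orthogonal is the orthogonal projection. (Equivalently one checks $P=P^*$ directly, since on the Fourier side both $W_\psi W_\gamma^*$ and $W_\gamma W_\psi^*$ act as the fibered multiplier with symbol $\chi_{B_R}(\xi)\,\wh\psi(r_\theta^{-1}\xi)\,\overline{\wh\psi(r_{\theta'}^{-1}\xi)}/C_\psi(\xi)$.) Finally, \eqref{eq:RKHSproj} is obtained by inserting \eqref{eq:adjoint} for $W_\gamma^*F$ into $W_\psi(W_\gamma^*F)(x,\theta)=(W_\gamma^*F)\ast\psi_\theta(x)$ and using Fubini to pull the $\theta'$-integral outside the convolution in $x$, together with commutativity of convolution.

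\textbf{Main obstacle.} The genuine mathematical content --- the collapse of the angular integral through radiality of $C_\psi$, and the identification of the range and kernel of $P$ --- is short; the delicate part is the functional-analytic bookkeeping. Since $W_\psi$ need not be bounded on all of $L^2(\R^2)$ (only on $\Hil_R$) and $\gamma$ need not lie in $L^1(\R^2)$, one must keep careful track of which space each operator and each adjoint is taken on, and justify the validity of \eqref{eq:adjoint} for $\gamma$ and of the Fubini exchanges; the cleanest route is to establish the multiplier identities first, where boundedness of $C_\gamma$ does all the work, and then read off the operator-level statements from them.
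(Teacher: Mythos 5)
Your proposal is correct and follows essentially the same route as the paper's sketch: the Fourier multiplier computation with the radiality of $C_\psi$ collapsing the angular integral for (i), and idempotency from (i) plus selfadjointness of $W_\psi W_\gamma^*$ (equivalently your range/kernel orthogonality argument) together with continuity of the quasiregular representation for (ii). You additionally supply the bookkeeping the sketch omits --- boundedness of $W_\gamma$ via $C_\gamma=\chi_{B_R}/C_\psi\le A^{-1}$, the explicit point-evaluation bound, and the observation that $W_\gamma^*$ maps into $\Hil_R$ --- all of which is accurate.
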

\begin{proof}[Sketch of the proof]
Observe first that
$\displaystyle
\wh{\gamma_\theta^\ad} = \ol{\wh{\gamma_\theta}} = \frac{\chi_{B_R}(\xi)}{C_\psi(\xi)} \ol{\wh{\psi_\theta}(\xi)}.
$
Thus, recalling \eqref{eq:adjoint} and using Fourier convolution Theorem, we can compute
$$
\wh{W_\gamma^*W_\psi f}(\xi) = \int_\Sone \wh{f}(\xi) \frac{\chi_{B_R}(\xi)}{C_\psi(\xi)} |\wh{\psi_\theta}(\xi)|^2 d\theta = \wh{W_\psi^*W_\gamma f}(\xi) = \wh{f}(\xi)\chi_{B_R}(\xi)
$$
which proves (i). To prove (ii), one needs to show that the elements of $W_\psi(\Hil_R)$ are continuous functions, that $W_\psi(\Hil_R)$ $W_\psi W_\gamma^*$ is selfadjoint and idempotent, and that \eqref{eq:RKHSproj}. The continuity can be obtained as a consequence of the continuity of the unitary representation \eqref{eq:quasiregular} and, by the same arguments used to prove (i) we can easily see that $W_\gamma W_\psi^* = W_\psi W_\gamma^*$. On the other hand, (i) implies that $W_\psi W_\gamma^* W_\psi f = W_\psi f$ for all $f \in \Hil_R$, hence $W_\psi W_\gamma^* F = F$ for all $F \in W_\psi(\Hil_R)$. Equation \eqref{eq:RKHSproj} can be obtained directly by \eqref{eq:adjoint} and the definition of $W_\psi$.
\end{proof}

Since $W_\psi(\Hil_R)$ is a Hilbert space of continuous functions, it makes sense to consider its values on the zero measure set provided by the graph of a function $\Theta : \R^2 \to \Sone$, as in \eqref{eq:data}. We can then provide a formal statement of the problem discussed in \S \ref{s:intro}:
\begin{equation}\label{eq:problem}
\textnormal{for } f \in \Hil_R \textnormal{ and } \Theta : \R^2 \to \Sone \textnormal{, reconstruct } f \textnormal{ using only the values } W_\psi f (x,\Theta(x)).
\end{equation}

In order for this problem to be solvable, the graph $\mathscr{G}_\Theta = \{(x,\theta) \in \R^2 \times \Sone : \theta = \Theta(x)\}$ must be a set of uniqueness for $W_\psi(\Hil_R)$. That is, the following condition must hold:
\begin{equation}\label{eq:uniqueness}
\textnormal{if } F \in W_\psi(\Hil_R) \textnormal{ and } F|_{\mathscr{G}_\Theta} = 0 \textnormal{ , then } F = 0.
\end{equation}
Indeed, if this was not the case, for any nonzero $F \in W_\psi(\Hil_R)$ that vanishes on $\mathscr{G}_\Theta$, the function $f_F = W_\gamma^* (W_\psi f + F) \in \Hil_R$ would be different from $f$ but $W_\psi f_F$ would coincide with $W_\psi f$ on $\mathscr{G}_\Theta$. That is, we could not solve problem \eqref{eq:problem}.

Condition \eqref{eq:uniqueness} is in general hard to be checked, and the formal characterization of the interplay between $\psi$ and $\Theta$ that make it hold true is out of the scopes of this paper. However, in the next section we will provide a technique for addressing \eqref{eq:problem} in a discrete setting, which will allow us to explore in \S \ref{s:numerics} the behavior of this problem for various functions $\Theta$ inspired by the feature maps measured in V1.

\section{A reconstruction algorithm}\label{s:algorithm}

In this section we describe the discretization of the problem \eqref{eq:problem} which is used in the next section. Then, we introduce a kernel based iterative algorithm, and prove its convergence to the solution whenever the solvability condition \eqref{eq:uniqueness} holds. \vspace{2ex}

\subsection{Discretization of the problem}

The setting described in \S \ref{s:continuous} can be discretized in a standard fashion by replacing $L^2(\R^2)$ with $\C^{N \times N}$, endowed with the usual Euclidean scalar product, circular convolution and discrete Fourier transform (FFT), which amounts to replacing $\R$ with $\Z_N$, the integers modulo $N$.  Explicitly, given $f, \psi \in \C^{N \times N}$, $x = \binom{x_1}{x_2}, y = \binom{y_1}{y_2}, \xi = \binom{y_1}{\xi_2} \in \Z_N \times \Z_N$, we have
$$
f \ast \psi (x) = \sum_{y_1 = 0}^{N-1} \sum_{y_2 = 0}^{N-1} f(y) g\big( (x - y) \!\!\!\!\!\mod N\big) \, , \textnormal{ and }
\wh{f}(\xi) = \sum_{x_1 = 0}^{N-1} \sum_{x_2 = 0}^{N-1} e^{-2\pi i \frac{x_1\xi_1 + x_2\xi_2}{N}} f(x).
$$
With a uniform discretization of angles, by replacing $\Sone$ with $\frac{2\pi}{M}\Z_M = \{0, \frac{2\pi}{M}, \frac{4\pi}{M}, \dots, 2\pi\frac{M-1}{M}\}$ we obtain the following discretization of the $SE(2)$ transform with the mother wavelet \eqref{eq:psi}:
$$
W_\psi f(x,j) = f \ast \psi_{\theta_j} (x) \, , \textnormal{ where } \wh{\psi_{\theta_j}}(\xi) = e^{-\big|\xi + \binom{p\cos\theta_j}{p\sin\theta_j}\big|^2/2 s^2}
$$
for $x, \xi \in \Z_N \times \Z_N$ and $\theta_j = \frac{2\pi}{M} j$, $j = 0,\dots,M$. Thus, in particular, $W_\psi f \in \C^{N \times N \times M}$. Note that here, for simplicity, we have removed the normalization used in \eqref{eq:psi}.

This allows us to to process $N \times N$ digital images while retaining the results of Theorems \ref{th:SE2inv} and \ref{th:SE2proj} as statements on finite frames (see e.g. \cite{CasazzaKutyniok}) since Plancherel's theorem and Fourier convolution theorem still hold. In particular, when computing numerically the inverse of $W_\psi$  using (i), Theorem \ref{th:SE2proj}, one has to choose an $R > 0$ so that Calder\'on's condition \eqref{eq:CalderonSE2} for
\begin{equation}\label{eq:CalderonDiscrete}
C_\psi(\xi) = \sum_{j = 0}^{M - 1}  e^{-\big|\xi + \binom{p\cos\theta_j}{p\sin\theta_j}\big|^2/2 s^2}
\end{equation}
holds with some $0 < A \leq B < \infty$ for all $\xi \in B_R = \{\xi \in \Z_N \times \Z_N : \xi_1^2 + \xi_2^2 < R^2 \}$. This is the injectivity condition on $\Hil_{R} = \{f \in \C^{N \times N} : \wh{f}(\xi) = 0 \, \forall \, \xi \notin B_R\}$ and, due to the finiteness of the space, now it can be achieved for all $R$, i.e. without bandlimiting. However, since this is equivalent to the frame inequalities \eqref{eq:frames}, the quantity $\sqrt{\frac{B}{A}}$ defines actually the condition number of $W_\psi $. Thus, in order to keep stability for the inversion, the ratio $\frac{B}{A}$ can not be arbitrarily large (see e.g. \cite{CasazzaKutyniok, Duits07}). Once the parameters $s,p,R$ are chosen in such a way that this ratio provides an acceptable numerical accuracy, one can then compute the projection $\Proj$ given by (ii), Theorem \ref{th:SE2proj}, on $F \in \C^{N \times N \times M}$, by making use of Fourier convolution theorem:
\begin{equation}\label{eq:RKdiscrete}
\wh{\Proj F} (\xi,j) = \frac{\chi_{B_R}(\xi)}{C_\psi(\xi)}\wh{\psi_{\theta_j}}(\xi) \sum_{\ell = 0}^{M - 1} \wh{F}(\xi,\ell) \ol{\wh{\psi_{\theta_\ell}}(\xi)}.
\end{equation}

We note at this point that this standard discretization, in general (for $M$ different from $2$ or $4$), retains all of the approach of \S \ref{s:continuous} but the overall semidirect group structure of $\R^2 \rtimes \Sone$.

Let us now consider the discretization of the problem \eqref{eq:problem} and denote the graph of $\Theta : \Z_N \times \Z_N \to \Z_M$ by $\mathscr{G}_\Theta = \{(x,j) \in \Z_N \times \Z_N \times \Z_M : j = \Theta(x)\}$. If we denote by $\Opr$ the selection operator that sets to zero all the components of an $F \in \C^{N \times N \times M}$ that do not belong to $\mathscr{G}_\Theta$, that is
$$
\Opr F(x,j) = \left\{
\begin{array}{cl}
F(x,j) & (x,j) \in \mathscr{G}_\Theta\\
0 & (x,j) \notin \mathscr{G}_\Theta
\end{array}
\right.
$$
we can see that this is now an orthogonal projection of $\C^{N \times N \times M}$. Hence, problem \eqref{eq:problem} can be rewritten in the present discrete setting as follows: given $f \in \Hil_R$, find $F \in \C^{N \times N \times M}$ that solves the linear problem
\begin{equation}\label{eq:SE2system}
\left\{
\begin{array}{rl}
\Proj F & = F\\
\Opr F & = \Opr W_\psi f.
\end{array}
\right.
\end{equation}
The meaning of \eqref{eq:SE2system} is indeed to recover $W_\psi f$ knowing only the values $W_\psi f (x,\Theta(x))$.

We propose to look for such a solution using the following iteration in $\C^{N \times N \times M}$: for $F_0 = \Opr W_\psi f$, compute
\begin{equation}\label{eq:iteration}
F_n = \Proj F_{n - 1} - \Opr \Proj F_{n - 1} + F_0 \ , \quad n = 1, 2, \dots
\end{equation}
The idea behind this iteration is elementary: we start with the values of $W_\psi f$ selected by $\Theta$, we project them on the RKHS defined by the image of $W_\psi$, and we replace the values on $\mathscr{G}_\Theta$ of the result with the known values of $W_\psi f$. The convergence of this iteration is discussed in the next section. Before that, we observe that \eqref{eq:iteration} can be seen as a linearized version of the Wilson, Cowan and Ermentrout equation \cite{WC72, EC80} since, denoting by $K = (\one - \Opr)\Proj$, it is a discretization of
$$
\frac{d}{dt} F_t = - F_t + K F_t + F_0.
$$
Apart from the absence of a sigmoid, this is indeed a classical model of population dynamics. Here, the ``kernel'' $K$ is not just the reproducing kernel $\Proj$, but it also contains the projection on a feature map $\Opr$. Returning to the model of V1, here the forcing term $F_0$ is the data collected by simple cells, while the stationary solution $F$, if it exists, is the full group representation of the visual stimulus defined as solution to the Volterra-type equation $F = K F + F_0$.\vspace{2ex}

\subsection{The project and replace iteration}

We show here that, whenever the problem \eqref{eq:SE2system} is solvable, the iteration \eqref{eq:iteration} converges to its solution. Since the argument is general, we will consider in this subsection the setting of an arbitrary finite dimensional vector space $V$ endowed with a scalar product and the induced norm, and two arbitrary orthogonal projections $P, Q$. For an orthogonal projection $P$ we will denote by $P^\bot = \one - P$ the complementary orthogonal projection.

We start with a basic observation, which is just a restatement of the solvability condition \eqref{eq:uniqueness} as that of a linear system defined by an orthogonal projection, in this case $Q$, on a subspace, in this case characterized as $\Ran(P)$. The simple proof is included for convenience.  
\begin{lem}
Let $P, Q$ be orthogonal projections of a finite dimensional vector space $V$. The system
\begin{equation}\label{eq:system}
\left\{
\begin{array}{rl}
P F & = F\\
Q F & = Q \wt{F}
\end{array}
\right.
\end{equation}
has a unique solution in $V$ for any $\wt{F} \in \Ran(P)$ if and only if
\begin{equation}\label{eq:condition}
\Ker(Q) \cap \Ran(P) = \{0\} .
\end{equation}
\end{lem}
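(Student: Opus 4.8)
The plan is to strip the system \eqref{eq:system} down to a statement about the subspace $\Ran(P)$ and the subspace $\Ker(Q)$, after which the lemma becomes almost immediate. The first step is the standard remark that, for an orthogonal (in fact, for any) projection $P$, the equation $PF = F$ is equivalent to $F \in \Ran(P)$. So a solution of \eqref{eq:system} is exactly an $F \in \Ran(P)$ with $QF = Q\wt{F}$, and the key realization is that the entire content of the lemma concerns \emph{uniqueness}: existence will come for free from the hypothesis $\wt{F} \in \Ran(P)$.

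Indeed, for the existence part I would simply note that $F = \wt{F}$ solves \eqref{eq:system} whenever $\wt{F} \in \Ran(P)$, since then $P\wt{F} = \wt{F}$ and trivially $Q\wt{F} = Q\wt{F}$. Hence \eqref{eq:system} always has at least one solution, and ``unique solution'' means ``at most one solution''. For the uniqueness part I would identify the solution set: if $F_1, F_2 \in \Ran(P)$ both satisfy $QF_i = Q\wt{F}$, then $F_1 - F_2 \in \Ran(P)$ (a linear subspace) and $Q(F_1 - F_2) = 0$, so $F_1 - F_2 \in \Ker(Q) \cap \Ran(P)$; conversely, adding to a solution any element of $\Ker(Q)\cap\Ran(P)$ yields another solution. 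Thus the solution set of \eqref{eq:system} is the coset $\wt{F} + \big(\Ker(Q) \cap \Ran(P)\big)$, which reduces to a single point precisely when $\Ker(Q) \cap \Ran(P) = \{0\}$.

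Since the condition \eqref{eq:condition} does not depend on $\wt{F}$, this settles both implications simultaneously: if \eqref{eq:condition} holds, the solution is unique for every $\wt{F} \in \Ran(P)$; if it fails, pick a nonzero $G \in \Ker(Q)\cap\Ran(P)$, and observe that for $\wt{F} = 0 \in \Ran(P)$ both $F = 0$ and $F = G$ solve \eqref{eq:system}. The only point requiring a little care — and it is a pitfall rather than a genuine difficulty — is to keep existence and uniqueness conceptually separate, using the restriction $\wt{F}\in\Ran(P)$ to dispose of the former and isolating \eqref{eq:condition} as the obstruction to the latter; finite-dimensionality plays no role in this particular lemma, although it will be used in the convergence analysis that follows.
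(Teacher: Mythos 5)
Your proof is correct and follows essentially the same route as the paper: note that $F=\wt{F}$ always solves the system, and observe that the difference of any two solutions lies in $\Ker(Q)\cap\Ran(P)$, so the solution set is the coset $\wt{F}+\bigl(\Ker(Q)\cap\Ran(P)\bigr)$. Your explicit construction of a second solution when the intersection is nontrivial makes the converse direction slightly more transparent than the paper's one-line version, but the argument is the same.
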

\begin{proof}
The system \eqref{eq:system} is of course solved by $F = \wt{F}$, so we only need to prove that, for all $\wt{F} \in \Ran(P)$, this solution is unique if and only if \eqref{eq:condition} holds. In order to see this, let $S \in V$ be a solution to \eqref{eq:system}, and denote by $E = \wt{F} - S$. Then $E \in \Ran(P) \cap \Ker(Q)$. Hence, \eqref{eq:condition} holds if and only if $E = 0$, that is, the only solution to \eqref{eq:system} is $F = \wt{F}$.
\end{proof}

The problem posed by the system \eqref{eq:system} is a problem of linear algebra: if we know that a vector $F$ belongs to a given subspace $\Ran(P) \subset V$, and we know the projection of $F$ on a different subspace $\Ran(O) \subset V$, can we recover $F$? The next theorem shows that, if the system \eqref{eq:system} has a unique solution, such a solution can be obtained by the project and replace iteration \eqref{eq:iteration}.

\begin{theo}\label{th:main}
Let $V$ be finite dimensional vector space, and let $P, Q$ be orthogonal projections of $V$. Given $\wt{F} \in \Ran(P)$, set $F_0 = Q\wt{F}$, and let $\{F_n\}_{n \in \N}, \{H_n\}_{n \in \N} \subset V$ be the sequences defined by the project and replace iteration
\begin{equation}\label{eq:PRiteration}
\left\{
\begin{array}{rl}
H_n & = P F_{n - 1}\vspace{4pt}\\
F_n & = Q^\bot H_n + F_0
\end{array}
\right. \ , \quad n = 1, 2, \dots
\end{equation}
If condition \eqref{eq:condition} holds, then
$$
\lim_{n \to \infty} H_n = \lim_{n \to \infty} F_n = \wt{F}
$$
and the errors $\|\wt{F} - H_n\|$, $\|\wt{F} - F_n\|$ decay exponentially with the number of iterations $n$.
\end{theo}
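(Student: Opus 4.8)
The plan is to recognize \eqref{eq:PRiteration} as an affine fixed-point recursion whose linear part becomes a strict contraction precisely when \eqref{eq:condition} holds. First I would eliminate $H_n$ by substituting $H_n = PF_{n-1}$ into the update for $F_n$, which collapses \eqref{eq:PRiteration} into the single recursion $F_n = Q^\bot P F_{n-1} + F_0$ for $n \geq 1$, started at $F_0 = Q\wt F$. Writing $T = Q^\bot P$, the point is then to identify $\wt F$ as the fixed point of the map $F \mapsto TF + F_0$: since $\wt F \in \Ran(P)$ we have $P\wt F = \wt F$, whence $T\wt F + F_0 = Q^\bot \wt F + Q\wt F = \wt F$. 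Subtracting this from the recursion gives $F_n - \wt F = T(F_{n-1} - \wt F)$, hence $F_n - \wt F = T^n(F_0 - \wt F)$, and likewise $H_n - \wt F = P(F_{n-1} - \wt F) = P\, T^{n-1}(F_0 - \wt F)$. So the whole statement reduces to bounding the powers of $T$.

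The heart of the proof — and the only place \eqref{eq:condition} is used — is the estimate $\|T\| = \|Q^\bot P\| < 1$. Since $V$ is finite dimensional, the operator norm is attained at some unit vector $v$; as $P$ and $Q^\bot$ are orthogonal projections, $\|Tv\| \leq \|Pv\| \leq \|v\| = 1$. If $\|Tv\| = 1$, then both inequalities are equalities, and using the Pythagorean identity $\|w\|^2 = \|Pw\|^2 + \|P^\bot w\|^2$ valid for an orthogonal projection, $\|Pv\| = \|v\|$ forces $Pv = v$, i.e. $v \in \Ran(P)$, and then $\|Q^\bot v\| = \|v\|$ forces $Q^\bot v = v$, i.e. $v \in \Ker(Q)$; this contradicts \eqref{eq:condition}. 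Hence $\|T\| < 1$. (Equivalently one could rule out eigenvalues of $T$ of modulus one by the same equality analysis and invoke the spectral radius, but the direct norm bound is shorter and immediately yields the rate.)

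It then remains only to assemble the conclusion: $\|F_n - \wt F\| \leq \|T\|^n \|F_0 - \wt F\|$ and $\|H_n - \wt F\| \leq \|T\|^{n-1}\|F_0 - \wt F\|$, so both sequences converge to $\wt F$ with errors decaying geometrically at rate $\|T\| < 1$, i.e. exponentially in $n$. I expect the main obstacle to be resisting the temptation to deduce the strict contraction from submultiplicativity alone, which only gives $\|T\| \leq 1$: the strict inequality genuinely requires the compactness of the unit sphere together with the rigidity of the equality case for orthogonal projections, and this is exactly the step where the uniqueness hypothesis \eqref{eq:condition} is consumed.
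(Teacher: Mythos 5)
Your proof is correct, and in substance it follows the same route as the paper: everything reduces to the strict contraction $\|Q^\bot P\|<1$, followed by geometric decay of the powers. The differences are worth recording, though. First, you work with the error recursion $F_n-\wt{F}=(Q^\bot P)^n(F_0-\wt{F})$ directly, whereas the paper unrolls the iteration into the partial sums of the Neumann series $\sum_k (Q^\bot P)^k F_0$, identifies the limit as the solution of $(\one-Q^\bot P)F=Q\wt{F}$, and then checks that $\wt{F}$ solves that equation; your fixed-point formulation gets to the same place with one fewer verification step. Second, and more importantly, you actually prove the inequality $\|Q^\bot P\|<1$ — attainment of the norm on the unit sphere, plus the rigidity of the equality case $\|Pv\|=\|v\|\Rightarrow Pv=v$ for orthogonal projections, forcing a nonzero vector of $\Ran(P)\cap\Ker(Q)$ — whereas the paper simply asserts it from condition \eqref{eq:condition}. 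This is the genuinely nontrivial step of the theorem, and your remark that it consumes the finite-dimensionality of $V$ is exactly right: for two closed subspaces of an infinite-dimensional Hilbert space, trivial intersection does not imply $\|Q^\bot P\|<1$, so the paper's unproved claim is precisely the point where the hypothesis on $V$ enters. The only minor omission is that you do not separately discuss the sequence $H_n$ beyond the bound $\|H_n-\wt{F}\|\le\|Q^\bot P\|^{n-1}\|F_0-\wt{F}\|$, but that bound already gives everything the statement asks for.
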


\begin{proof}
Denoting by $H_1 = P F_0$, we can rewrite the iteration \eqref{eq:PRiteration} as two separate iterations, generating the two sequences $\{F_n\}_{n \in \N}, \{H_n\}_{n \in \N}$ as follows:
\begin{equation}\label{eq:series}
\left\{
\begin{array}{ccccccl}
H_{n+1} & = &  P Q^\bot H_n + H_1 & = & \ldots &
= & \displaystyle \sum_{k = 0}^n (P Q^\bot)^k H_1\vspace{4pt}\\
F_n & = & Q^\bot P F_{n - 1} + F_0 & = & \ldots &
= & \displaystyle \sum_{k = 0}^n (Q^\bot P)^k F_0 .
\end{array}
\right.
\end{equation}

If \eqref{eq:condition} holds then $\|P Q^\bot\| < 1$ and $\|Q^\bot P\| < 1$, because $\Ran(\Proj) \cap \Ran(\Opr^\bot) = \Ran(\Proj) \cap \Ker(\Opr) = \{0\}$. Hence, the existence of the limits $H = \lim_{n \to \infty} H_n$ and $F = \lim_{n \to \infty} F_n$ is due to the convergence of the Neumann series (see e.g. \cite{RieszNagy}). Thus, getting rid of the notations $F_0 = Q \wt{F}$ and $H_1 = P F_0$, we have that the limits $H$ and $F$ are the solutions to the equations
$$
\left\{
\begin{array}{rcl}
(\one - P Q^\bot) H & = & P Q \wt{F} \vspace{4pt}\\
(\one - Q^\bot P) F & = & Q \wt{F}.
\end{array}
\right.
$$

We can now see that $F = \wt{F}$, because this equation reads
$$
(\one - Q^\bot P)^{-1} Q \wt{F} = \wt{F}
$$
and, using that $\wt{F} \in \Ran(P)$, we can rewrite it as
$$
Q \wt{F} = (\one - Q^\bot P) \wt{F} = \wt{F} - Q^\bot \wt{F}
$$
which is true by definition of $Q^\bot = \one - Q$. Moreover, by taking the limit in the first equation of \eqref{eq:PRiteration}, and using again that $\wt{F} \in \Ran(P)$, we obtain
$$
H = P F = P \wt{F} = \wt{F}.
$$

Finally, the convergence of \eqref{eq:iteration} is exponential because, using the series expression \eqref{eq:series} for $\wt{F} = F$,
\begin{align*}
\|\wt{F} - F_n\| & = \| \sum_{k = 0}^\infty (Q^\bot P)^k F_0 - \sum_{k = 0}^n (Q^\bot P)^k F_0 \| = \| \sum_{k = n+1}^\infty (Q^\bot P)^k F_0\|\\
& = \| (Q^\bot P)^{n+1} \sum_{k = 0}^\infty (Q^\bot P)^k F_0\| = \| (Q^\bot P)^{n+1} F \| \leq \|Q^\bot P\|^{n+1} \|F\|
\end{align*}
and a similar argument applies to $\|\wt{F} - H_n\|$.
\end{proof}

\section{Numerical results}\label{s:numerics}

We present here the reconstruction results of the project and replace iteration on the restriction to feature maps of the $SE(2)$ transform of real images. We have chosen eight $512 \times 512$ pixels, 8-bit grayscale digital images $\{f_i\}_{i = 1}^8 \subset \{0,\dots,255\}^{512 \times 512}$, which are shown in Figure \ref{fig:dataset} together with their Fourier spectra. Note that, for processing, they have been bandlimited in order to formally mantain the structure described in \S \ref{s:continuous}. However, this bandlimiting has minimal effects, not visible to the eye, since the spectra have a strong decay: for this reason, the bandlimited images are not shown.

For the discrete $SE(2)$ transform we have chosen a discretization of $\Sone$ with 12 angles, so that, with respect to the notation of the previous section, we have $N = 512$ and $M = 12$. We have also chosen the parameter values $s = 51, p = 170, R = 252$. The mother wavelet $\psi$ and dual wavelet $\gamma$ produced by these parameters are shown in Figure \ref{fig:SE2}, top and center, on a crop of the full $512 \times 512$ space for better visualization. We stress that the stability of the transform and the numerical accuracy of the projection \eqref{eq:RKdiscrete} depend only on the behavior of the Calder\'on's function, while the accuracy of image representation under bandlimiting with radius $R$ depends on the decay of the spectra of the images. The Calder\'on's function $C_\psi$, computed as in \eqref{eq:CalderonDiscrete}, is shown in Figure \ref{fig:SE2}, bottom left: the chosen parameters define a ratio $B/A \approx 6 \cdot 10^3$, corresponding to a condition number for $W_\psi$ of less than $10^2$. In Figure \ref{fig:SE2}, bottom right, we have shown in $\log_{10}$ scale the multiplier $\chi_{B_R}/C_\psi$ that defines the dual wavelet $\gamma$ as in \eqref{eq:gamma}, and in particular we can see the bandlimiting radius, to be compared with the spectra of Figure \ref{fig:dataset}. For visualization purposes, in Figure \ref{fig:RK} we have shown in spatial coordinates the integral kernel defining the projection \eqref{eq:RKdiscrete}, which is the reproducing kernel for the discrete $SE(2)$ transform. Its real and imaginary parts are shown on the same crop used to display the wavelet of Figure \ref{fig:SE2}.

We have implemented the iteration \eqref{eq:iteration} for the restriction of this discrete $SE(2)$ transform to different types of feature maps $\Theta$, shown in Figure \ref{fig:pins}. We will comment below each one of these cases. We have chosen to illustrate the effect of that iteration as follows. We have computed the sequence $\{H_n\}_{n = 1}^{\nu}$ as in \eqref{eq:PRiteration}, for a number $\nu$ of iterations, and applied the inverse $SE(2)$ transform $W_\gamma^*$ to each $H_n$. This allows us to obtain real images that are directly comparable with the original ones. We then have shown $W_\gamma^* H_1$, representing the first image that can be directly retrieved from the feature parameters, and $W_\gamma^* H_\nu$, that is the image obtained when we stopped the iteration. Moreover, as a measure of reconstruction error, we have considered the following rescaling of the Euclidean norm, at each step $n \in \{1,\dots,\nu\}$:
\begin{equation}\label{eq:DELTA}
\Delta_n = 100*\left(\frac{1}{N^2}\sum_{x \in \Z_N \times \Z_N}|f_i(x) - W_\gamma^* H_n[f_i](x)|^2\right)^\frac12/255 = 100*\frac{\|f_i - W_\gamma^* H_n[f_i]\|}{255*N}.
\end{equation}
This adimensional quantity measures a \% error obtained as the average square difference by pixel of an image $f_i$ in the dataset from its $n$-steps reconstruction $W_\gamma^* H_n[f_i]$, divided by the size of the admissible pixel range for 8 bit images, which is $\{0,\dots,255\}$.\vspace{1ex}

\paragraph*{First feature map: purely random selection of angles.} The first map that we have considered is a $\Theta : \Z_N \times \Z_N \to \Z_M$ that, for each $x \in \Z_N \times \Z_N$, simply choses one value in $\Z_M$ as a uniformly distributed random variable. This map is shown in the first line of Figure \ref{fig:pins}, left, and in the first line of Figure \ref{fig:pins}, right, we have shown an enlargement to the same crop at which the wavelets in Figure \ref{fig:SE2} and the reproducing kernel in Figure \ref{fig:RK} are shown. In Figure \ref{fig:S1} we have shown the images resulting from $W_\gamma^* H_1$ and $W_\gamma^* H_{1000}$, and the evolution of the error \eqref{eq:DELTA} in $\log_{10}$ scale, respectively in the left, center and right column, for the first four images of Figure \ref{fig:dataset}. In this case we can see that the error $\Delta_n$ goes beyond $1$\%, indicated by $0$ on the $y$ axis, in just about 500 iterations. As a remark, this kind of feature maps are commonly encountered in rodents (see e.g. \cite{HoAngelucci21} and references therein). \vspace{1ex}

\paragraph*{Pinwheel-shaped feature maps.} 
Next, we present the results for three selection maps $\Theta$ that are pinwheel-shaped, as it is commonly observed for orientation and direction preference maps of V1 in primates and other mammals. These maps can be constructed as follows \cite{PetitotBook}: for $\rho \in \R^+$, let $\phi_\rho : \Z_N \times \Z_N \to \C$ be given by
$$
\phi_\rho(x) = \int_0^{2\pi} e^{i \big(\rho (x_1 \cos(\alpha) + x_2 \sin(\alpha)) + \Gamma(\alpha) \big)} d\alpha
$$
where $\Gamma$ is a purely random process with values in $[0,2\pi)$. The maps $\Theta_\rho : \Z_N \times \Z_N \to \Z_M$ that we have considered are obtained by
\begin{equation}\label{eq:generatepinwheels}
\Theta_\rho(x) = \left\lfloor \frac{M}{2\pi}\textnormal{angle}(\phi_\rho(x))\right\rfloor
\end{equation}
where angle$(z)$ is the phase of a complex number $z \in \C$, and $\lfloor t \rfloor$ is the integer part of a real number $t$. The resulting maps are quasiperiodic, with a characteristic correlation length which corresponds to the fact that the spectrum of $\phi_\rho$ is concentrated on a ring of radius $\frac{\rho}{2\pi}$. The main feature of those maps $\Theta_\rho$ is that they possess points, called pinwheel points, around which all angles are mapped, and these points are spaced, on average, by a distance of $2\pi/\rho$ (see e.g. \cite{PetitotBook} and references therein).

In the second, third and fourth line of Figure \ref{fig:pins}, left, we have shown the resulting maps $\Theta_\rho$ with $\rho$ respectively given by $\rho = 0.8$, $\rho = 0.4$ and $\rho = 0.06$. On the right column we have shown an enlargement to the same crop used before.

The results of the iteration are presented, as described above, in Figures \ref{fig:S2}, \ref{fig:S3}, \ref{fig:S4}, whose structure is the same as in Figure \ref{fig:S1} with the exception of the number of iteration, that is now larger.

In order to discuss these results, we first recall that the correlation length of orientation preference maps has been often related to the \emph{size} of receptive fields, as a ``coverage'' constraint to obtain a faithful representation of the visual stimulus (see \cite{Bosking02, Keil11, BCS14} and references therein). However, no mathematical proof of this principle in terms of image reconstruction has been given so far, nor has the word \emph{size} received a more quantitative meaning within the models of type \eqref{eq:receptive}, and we have not tried to give any more specific meaning either. However, for the three cases that we present, by comparing the crops of the maps $\Theta_\rho$ in Figure \ref{fig:pins} with the wavelet $\psi$ in Figure \ref{fig:SE2}, we can see that for $\rho = 0.8$ the correlation length of the map $\Theta_\rho$ is approximately similar to what we could call effective support of the receptive field, while for $\rho = 0.4$ we have that the area of influence of the receptive field does not include two different pinwheel points, and for $\rho = 0.06$ the two scales are very different. Heuristically, one could then be led to think that the reconstruction properties in the three cases may present qualitative differences. For example, that condition \eqref{eq:uniqueness}, or its discrete counterpart \eqref{eq:condition}, may hold in the first case and may not hold in the third case.

What we can see by the numerical results of the proposed algorithm are that, actually, there is a difference in the behavior of the decay. For larger values of the parameter $\rho$, when the map $\Theta_\rho$ is more similar to the purely random selection described above, the decay of the error is faster, while for smaller values of $\rho$ the decay is slower, but nevertheless the error appears to be monotonically decreasing. In the presented cases, for $\rho = 0.8$, we can see in the right column of Figure \ref{fig:S2} that in about 2000 iterations the error decay appears to enter an exponential regime, which is rectilinear in the $\log_{10}$ scale. We see in Figure \ref{fig:S3} that it takes roughly twice as many iterations for $\rho = 0.4$ to enter the same regime. On the other hand, for $\rho = 0.06$ we can see in Figure \ref{fig:S4} that after a relatively small number of iterations the decay becomes very small, and does not seem to become exponential even after 10000 iterations. However, visual inspection of the results (which ``measures'' the error in a different way than the Euclidean norm) in this last case, show that the starting image appears to be qualitatively highly corrupted, while the image obtained after the iteration was stopped is remarkably true to the original one and does not display evident artifacts away from the boundaries. \vspace{1ex}

\paragraph*{Selection of a single orientation: deconvolution.}
The surprisingly good performance in the reconstruction problem for the las map $\Theta_{0.06}$ has motivated a performance test for an additional feature selection map, given by a function $\Theta$ that is independent of $x$, hence selecting just one angle in the $SE(2)$ transform. In the same fashion as seeing the purely random distribution as a limiting case for large $\rho$ of the pinwheel-shaped maps, this can be considered as a limiting case for small $\rho$. However, keeping only the values of $W_\psi f$ for a single angle $\theta$ concretely corresponds to performing a convolution with one function $\psi_\theta$, and aiming to reconstruct $f$ is actually a deconvolution problem. In this case, the frequency behavior of the convolution filter is that of a Gaussian centered away from the origin, and its shape can be observed in the Calder\'on's function of Figure \ref{fig:SE2}, which clearly shows the sum of 12 such Gaussians. We have shown the reconstruction results for this problem, using now 15000 iterations, in Figure \ref{fig:S5}. The error decay seems to share, qualitatively, much of the same properties of the case $\rho = 0.06$.

\section{Conclusions}
In this paper we have proposed an elementary iterative technique to address a problem inspired by visual perception and related to the reconstruction of images from a fixed reduced set of values of its $SE(2)$ group wavelet transform.

A possible interpretation of this iteration with a kernel defined by the $SE(2)$ group as a neural computation in V1 comes from the modeling of the neural connectivity as a kernel operation \cite{WC72, EC80, CS15, MCSx}, especially if considered in the framework of a neural system that aims to learn group invariant representations of visual stimuli \cite{PA14, Anselmi20}. A direct comparison of the proposed technique with kernel techniques recently introduced with radically different purposes in \cite{MBCSx, MCSx} shows however two main differences at the level of the kernel that is used: here we need the dual wavelet to build the projection kernel, and the iteration kernel effectively contains the feature maps. On the other hand, a possible application is the inclusion of a solvability condition such as \eqref{eq:uniqueness} as iterative steps within learning frameworks such as those of \cite{AERP19, Anselmi20}.

We would like to observe also that, since the proof of convergence of this technique is general, it could be applied to other problems with a similar structure. The computational cost essentially relies on the availability of efficient methods to implement the two projections that define the problem in the discrete setting, as it happens to be the case for the setting studied in this paper. In particular, similar arguments could be applied to other wavelet transforms based on semidirect product groups $\R^d \rtimes G$, with $G$ a subgroup of $GL_d(\R)$ that defines what in this paper are sometimes called local features, and to sampling projections obtained for example, but not only, from other types of feature maps $\Theta : \R^d \to G$. From the dimensional point of view, in the discrete setting the selection of local features with a feature map can be seen as a downsampling that allows one to mantain in the transformed space the same dimension of the input vector. This is often a desirable property and it is already commonly realized e.g. by the MRA decomposition algorith of classical wavelets or by the pooling operation in neural networks. Moreover, its apparent stability of convergence seem to suggest that this operation can be performed a priori, without the need of a previous study of solvability of the problem.

Several questions remain open after this study. Probably the most fundamental one is the characterization of those maps $\Theta$ that, for a given mother wavelet $\psi$, satisfy the solvability condition \eqref{eq:uniqueness}. In terms of the study of the convergence of the project and replace iteration, moreover, it is plausible that one could obtain convergence under weaker conditions than \eqref{eq:condition}, even if maybe to a different solution, as it appears to happen in some of the numerical simulations presented.

\newpage

\section*{Figures}

\begin{figure}[H]
\begin{center}
\includegraphics[width=.24\textwidth]{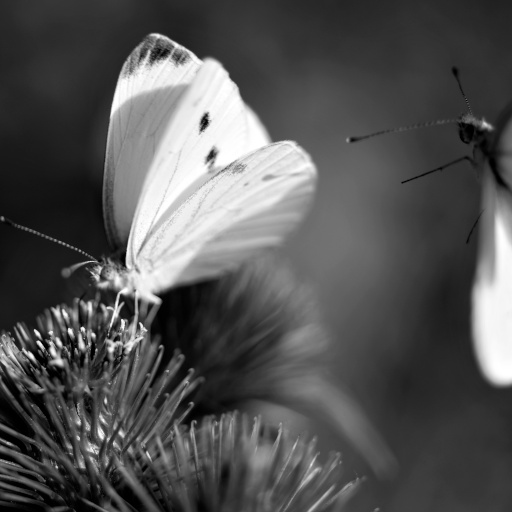}
\includegraphics[width=.24\textwidth]{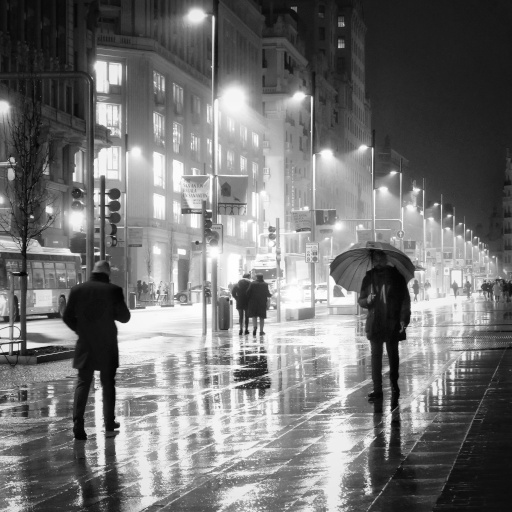}
\includegraphics[width=.24\textwidth]{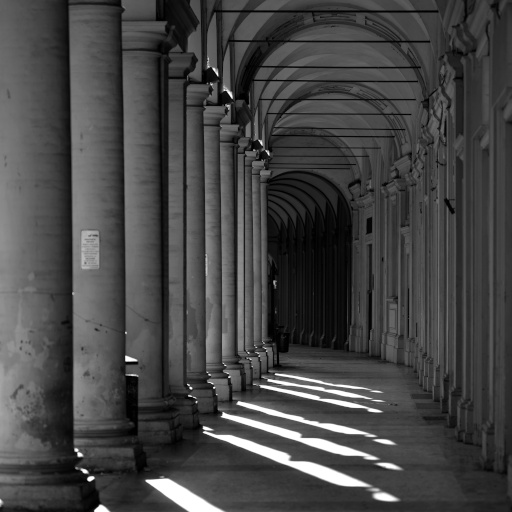}
\includegraphics[width=.24\textwidth]{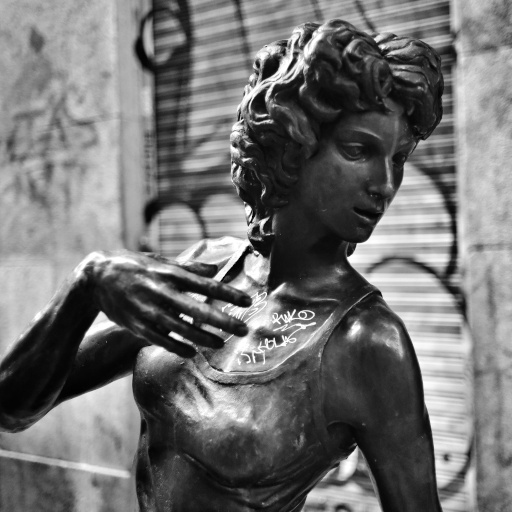}\vspace{4pt}\\
\includegraphics[width=.24\textwidth]{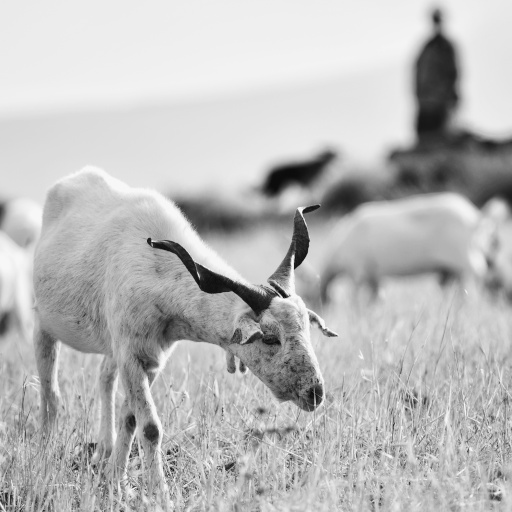}
\includegraphics[width=.24\textwidth]{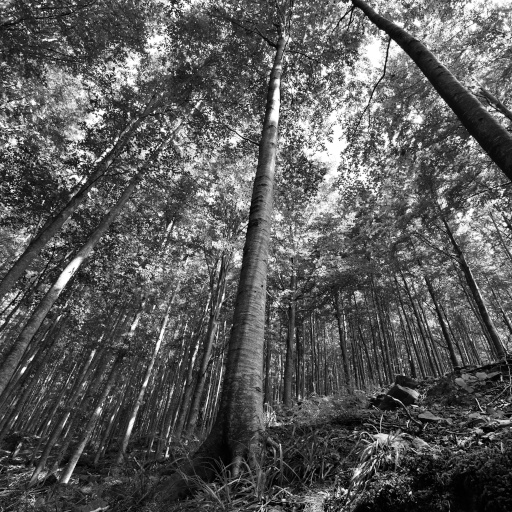}
\includegraphics[width=.24\textwidth]{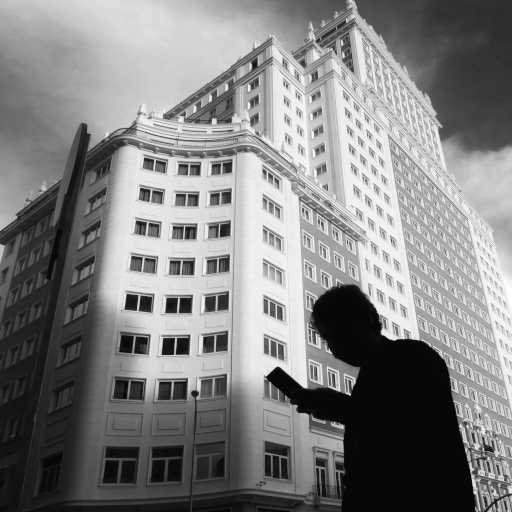}
\includegraphics[width=.24\textwidth]{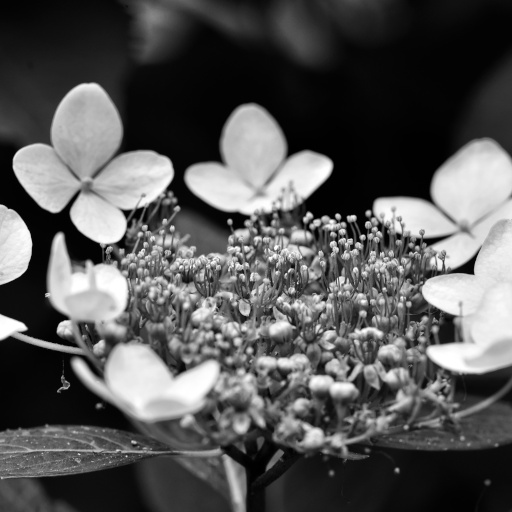}\vspace{2ex}\\
\includegraphics[width=.24\textwidth]{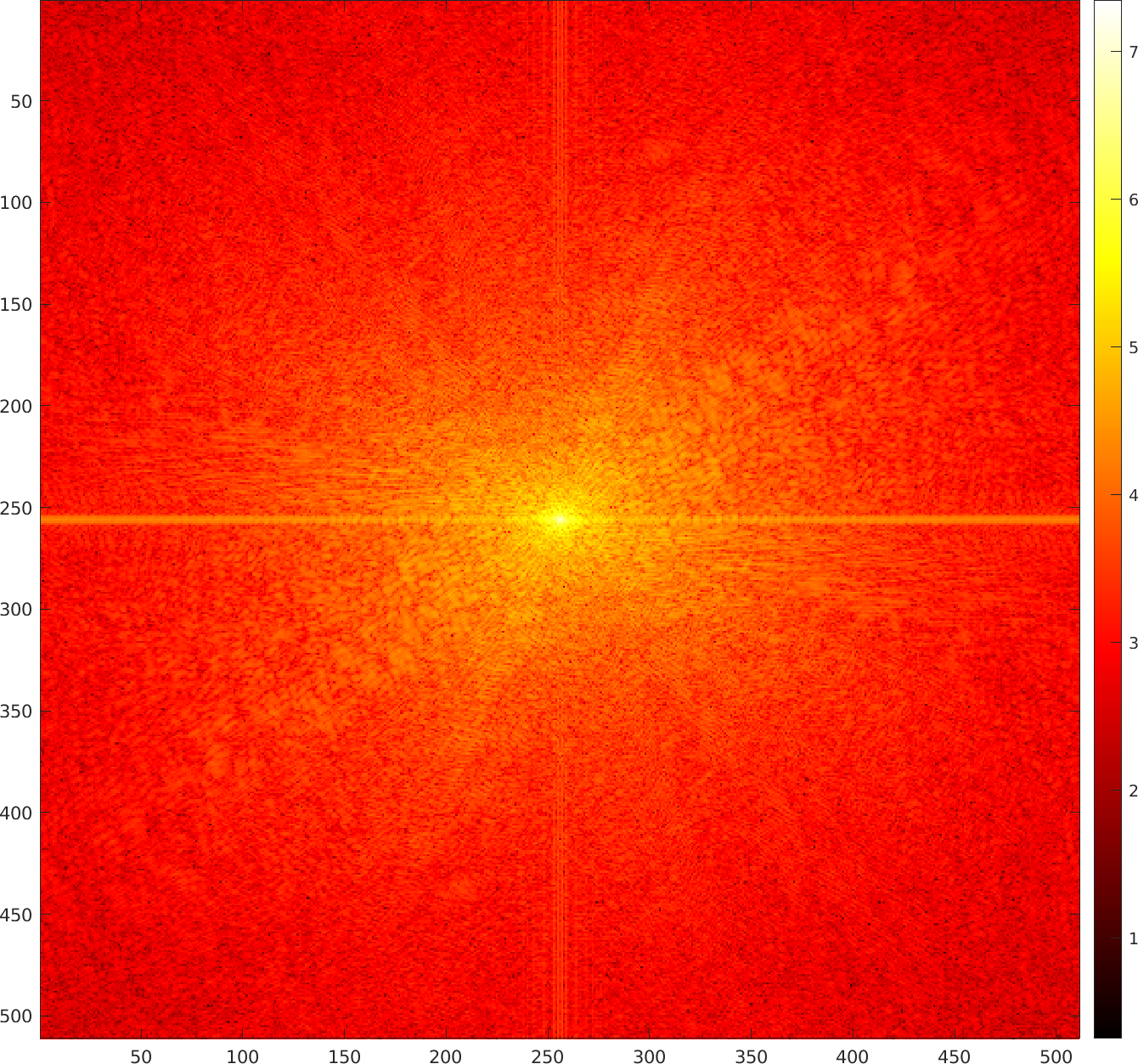}
\includegraphics[width=.24\textwidth]{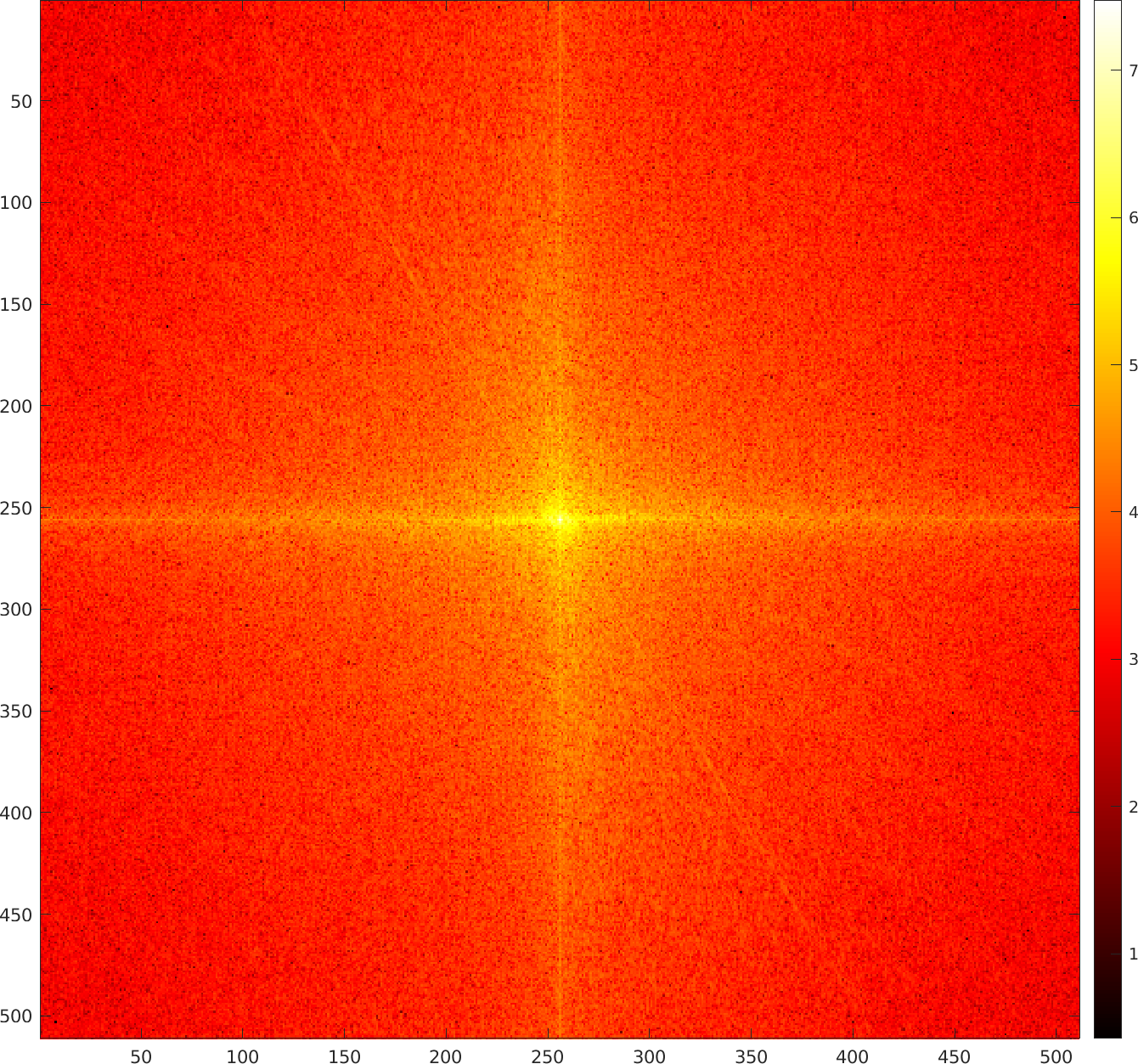}
\includegraphics[width=.24\textwidth]{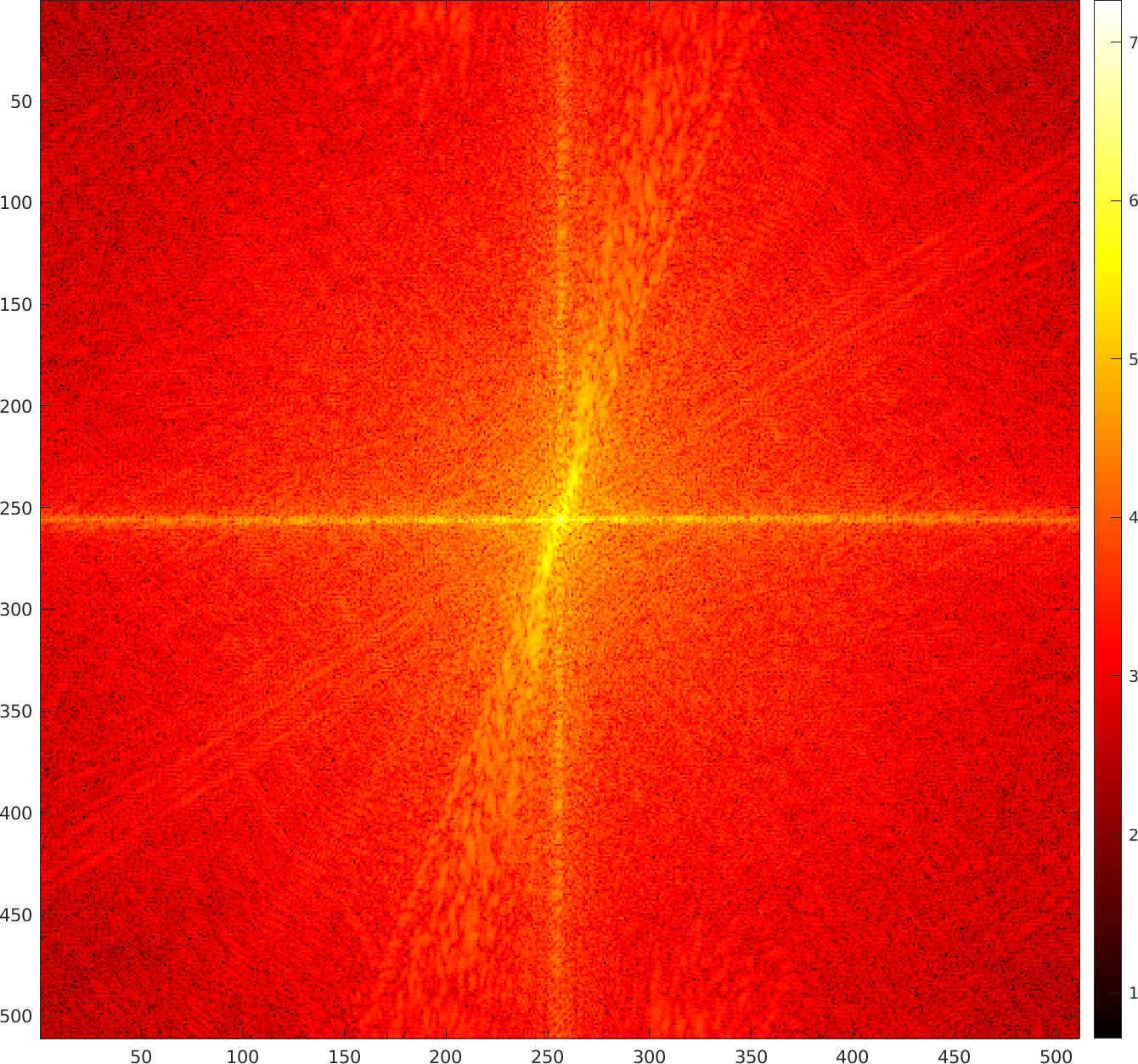}
\includegraphics[width=.24\textwidth]{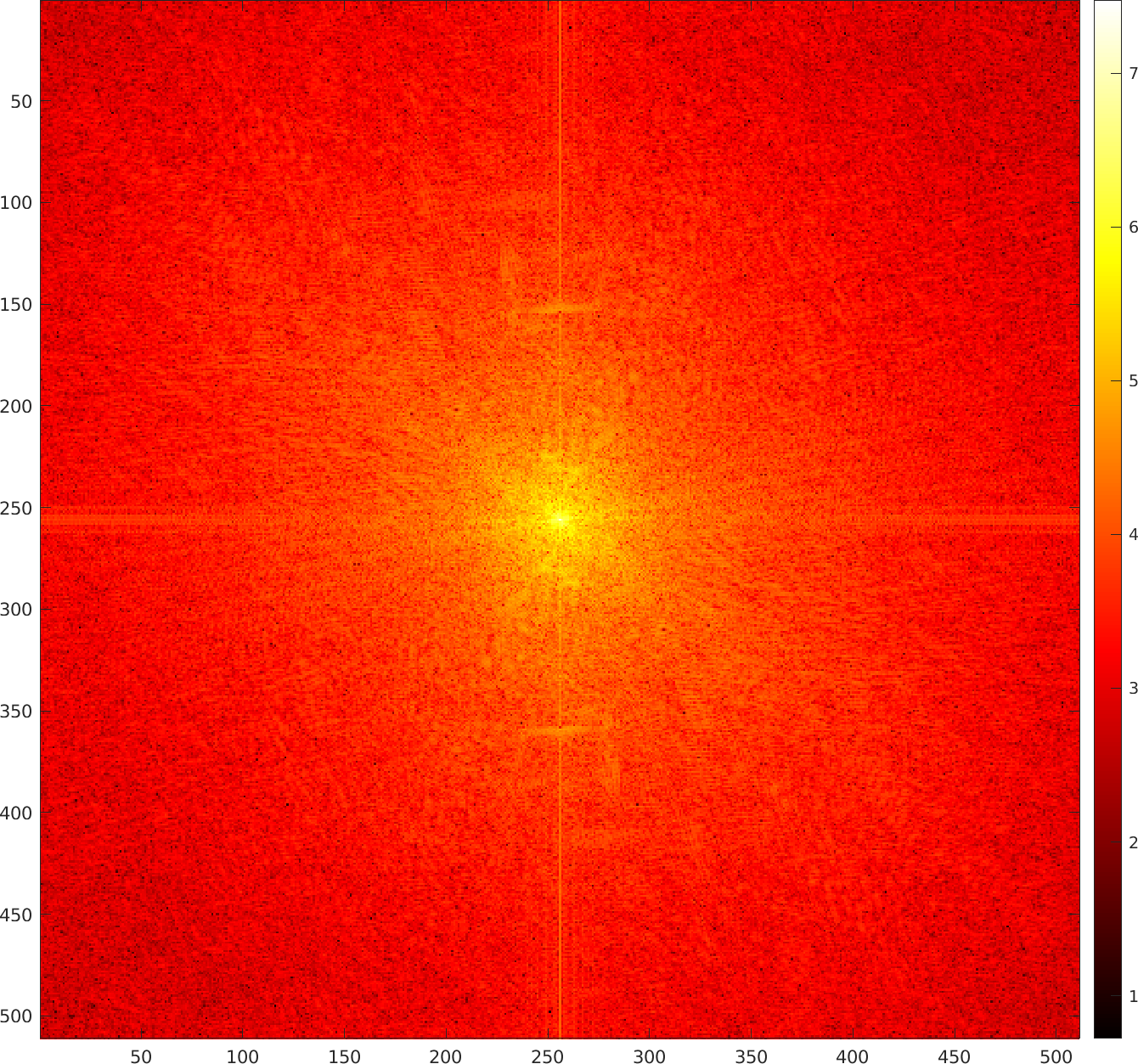}\vspace{4pt}\\
\includegraphics[width=.24\textwidth]{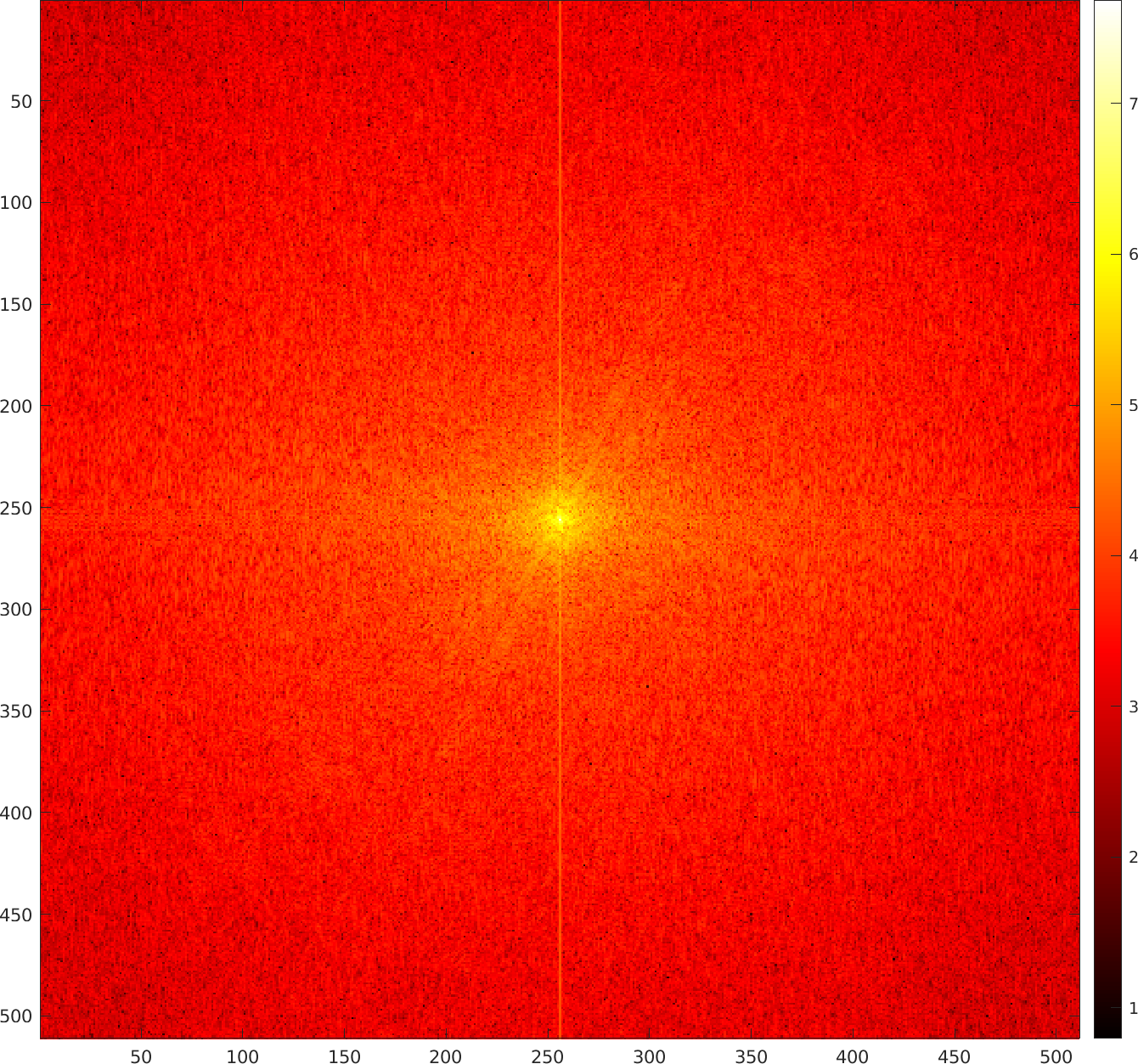}
\includegraphics[width=.24\textwidth]{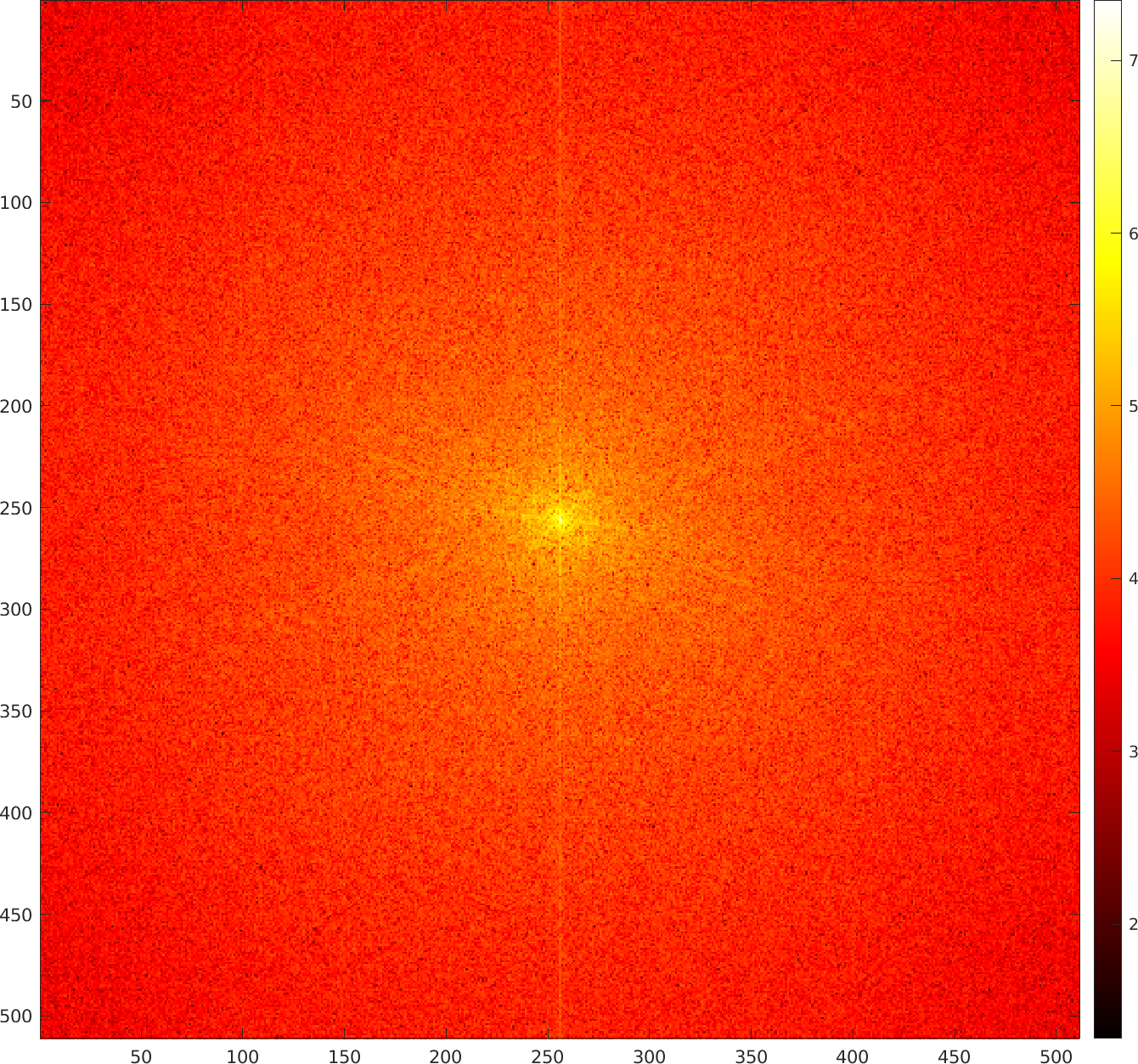}
\includegraphics[width=.24\textwidth]{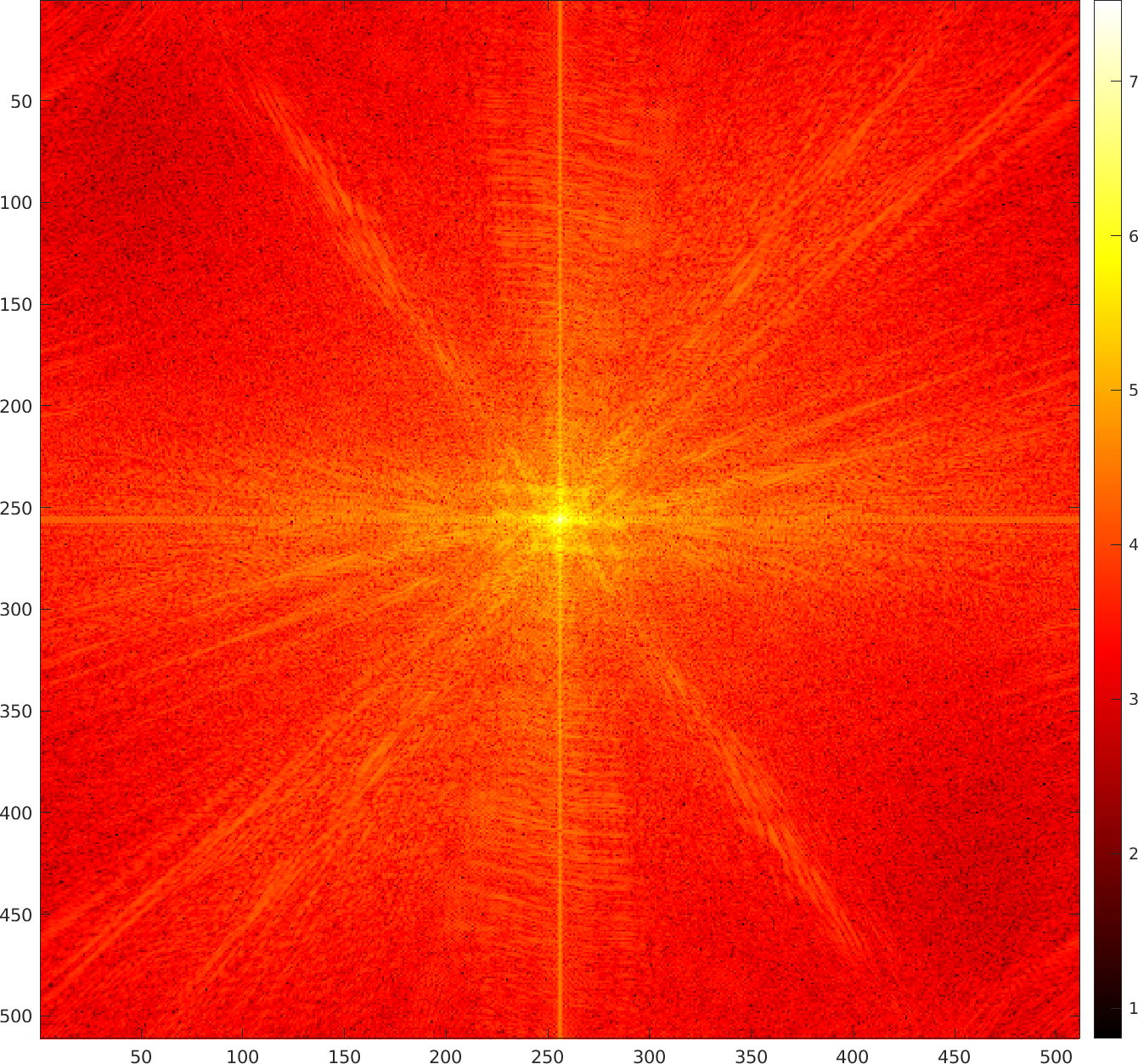}
\includegraphics[width=.24\textwidth]{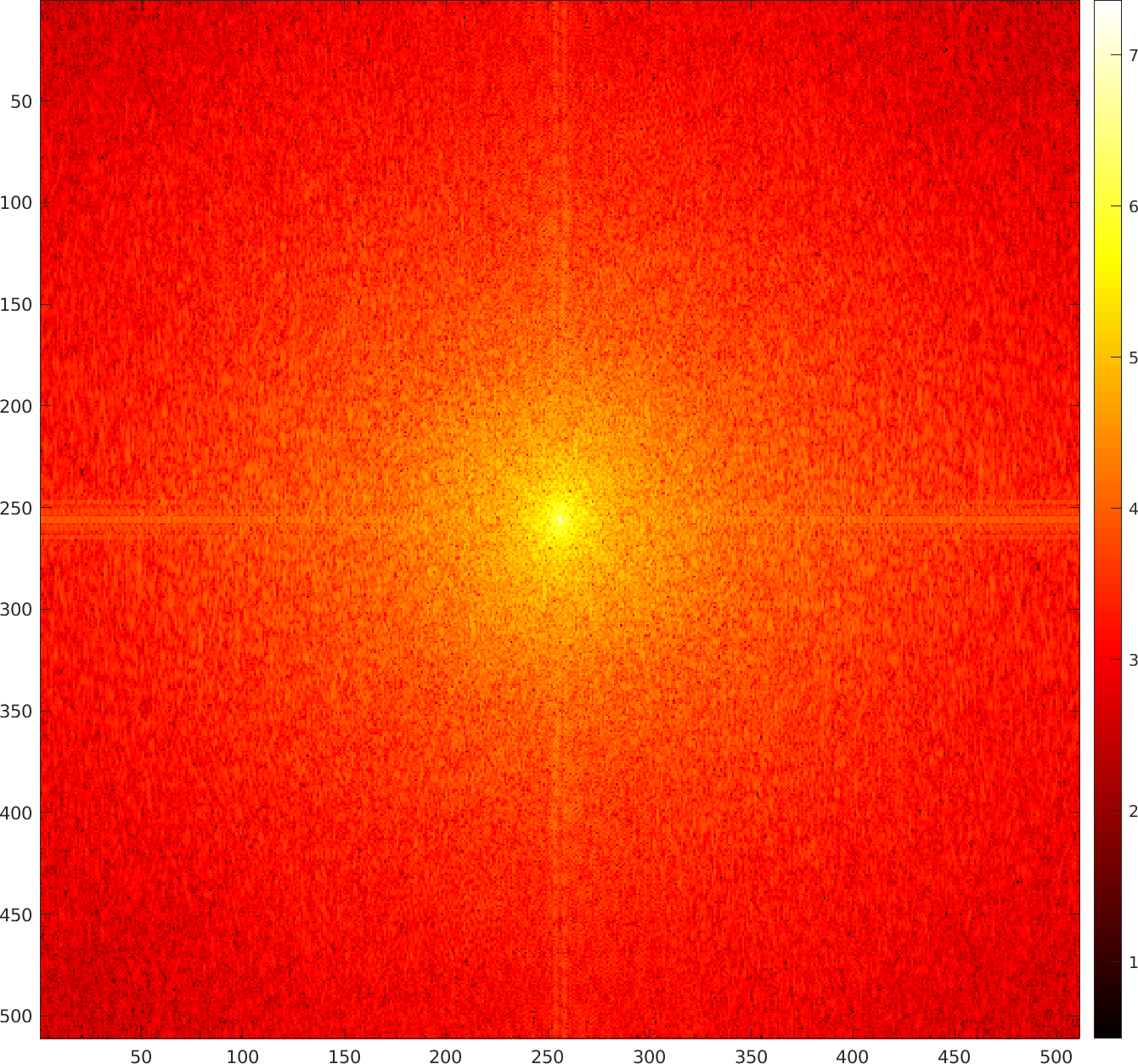}
\end{center}
\caption{Original test images (top) and their Fourier spectra in $\log_{10}$ scale (bottom).}\label{fig:dataset}
\end{figure}

\newpage

\begin{figure}[H]
\begin{center}
\includegraphics[height=.9\textheight]{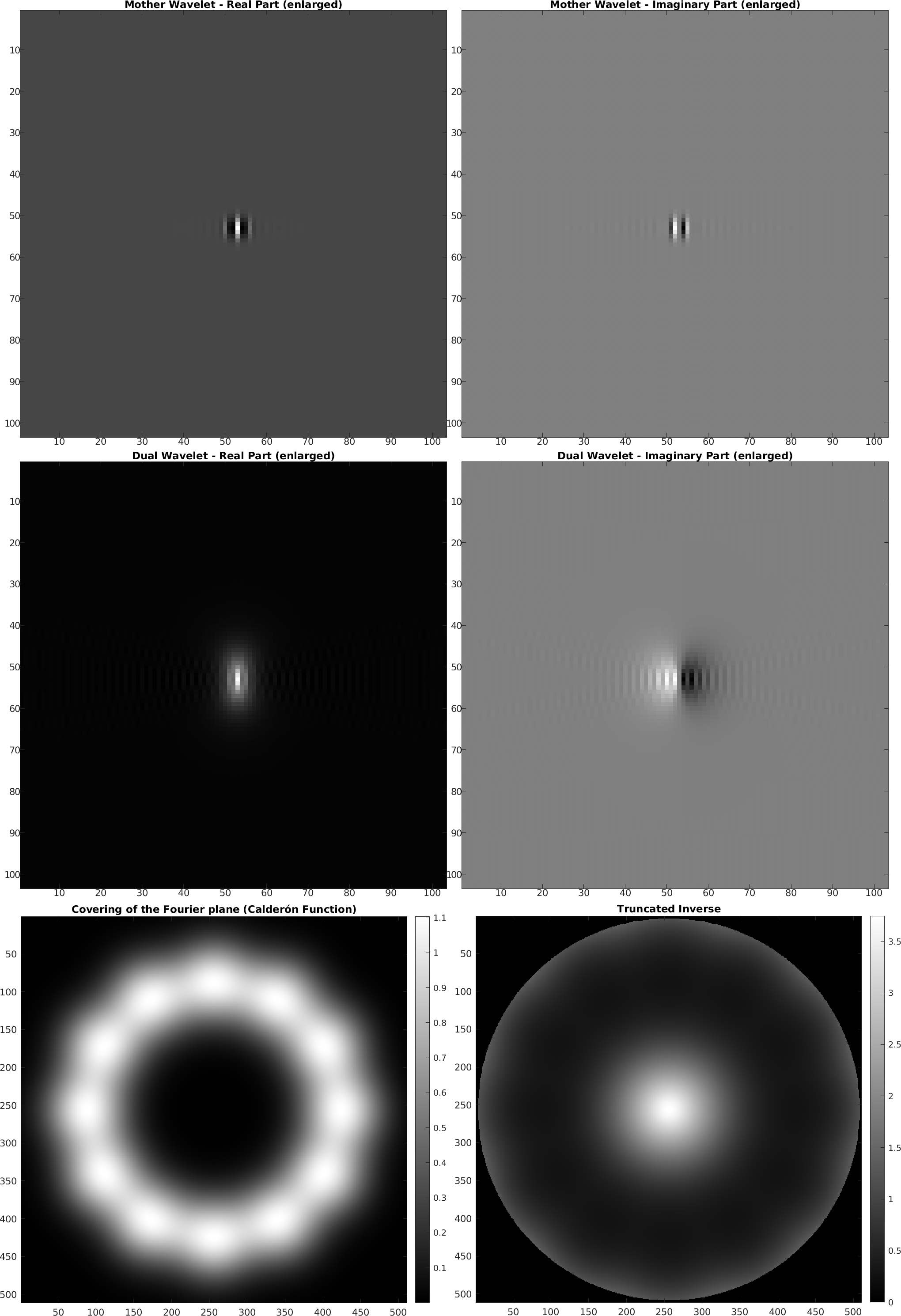}
\end{center}
\caption{Top: mother wavelet $\psi$. Center: dual wavelet $\gamma$. Bottom, left: Calder\'on's function. Bottom, right: inverse of the Calder\'on's function in $\log_{10}$ scale, bandlimited with $R = 252$.}\label{fig:SE2}
\end{figure}

\newpage

\begin{figure}[H]
\begin{center}
\includegraphics[width=.98\textwidth]{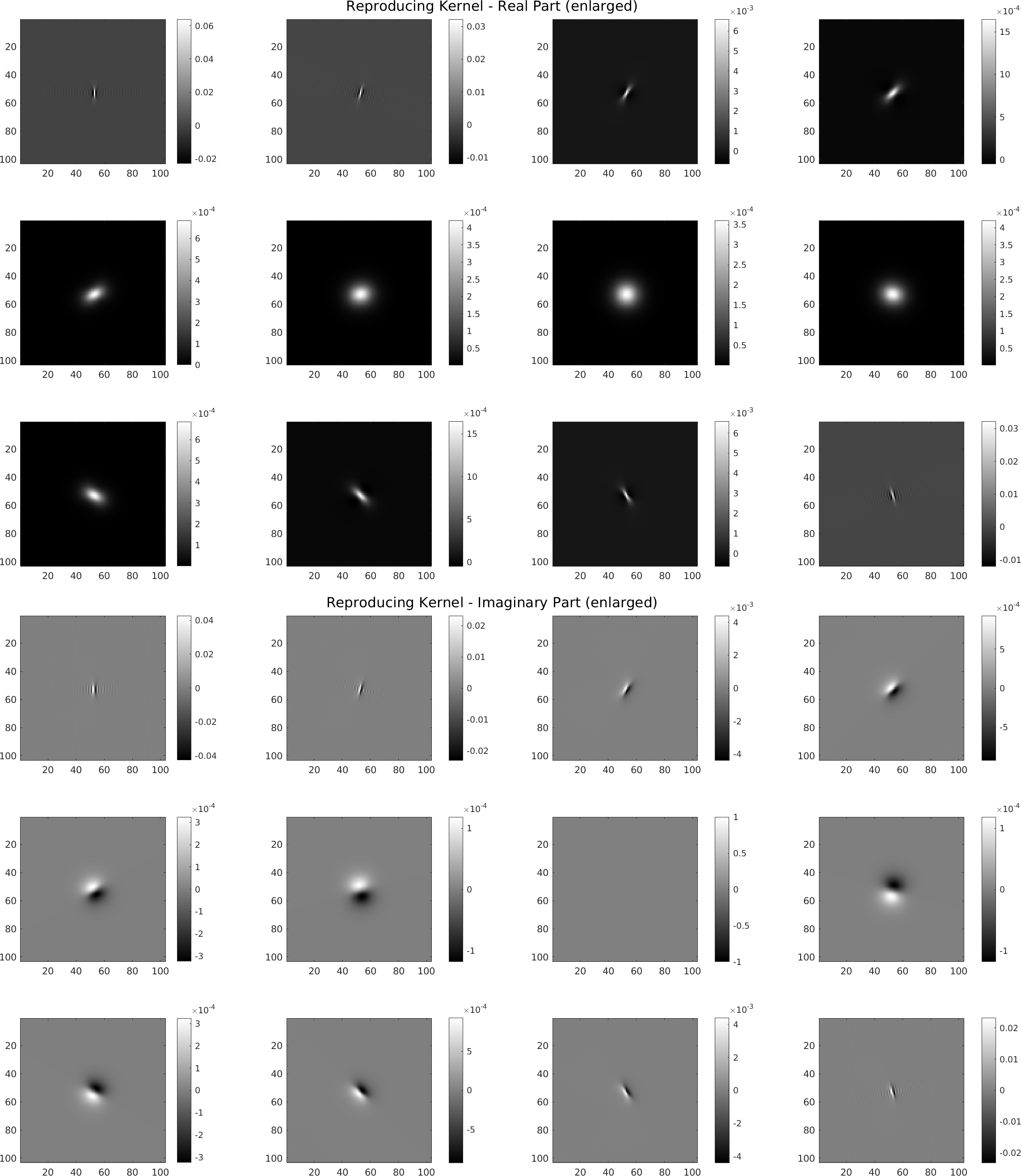}
\end{center}
\caption{Reproducing kernel for the wavelet of Figure \ref{fig:SE2}. Top: real part for the 12 angles. Bottom: imaginary part for the 12 angles.}\label{fig:RK}
\end{figure}

\newpage

\begin{figure}[H]
\begin{center}
\includegraphics[height=.9\textheight]{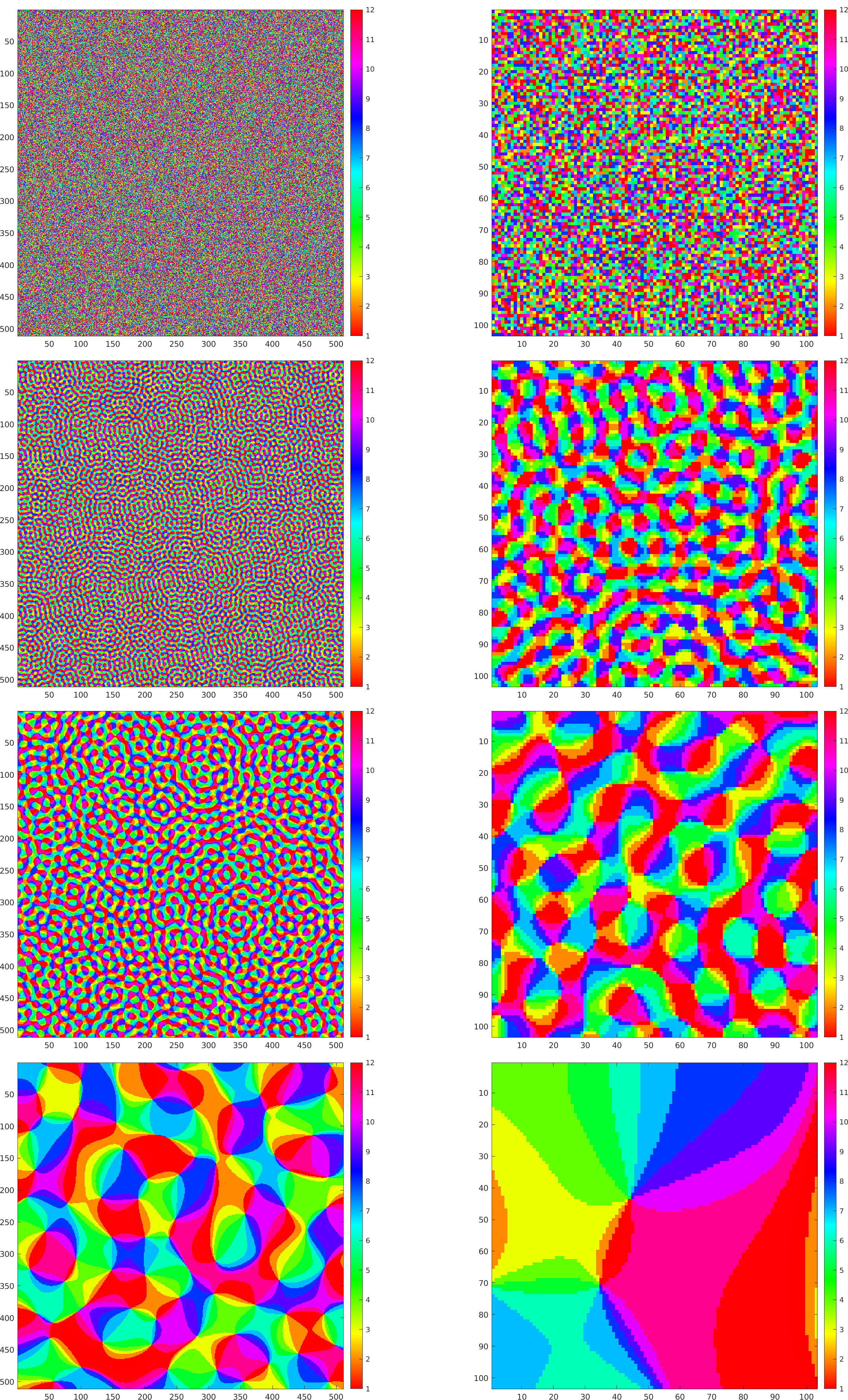}
\end{center}
\caption{Left column: maps $\Theta$ for the simulations in Figures \ref{fig:S1}, \ref{fig:S2}, \ref{fig:S3}, \ref{fig:S4}. Right column: enlargements of the same maps. First line: purely random $\Theta$. Second, third and fourth line: maps $\Theta_\rho$ generated according to \eqref{eq:generatepinwheels} with $\rho$ respectively given by $\rho = 0.8$, $\rho = 0.4$ and $\rho = 0.06$.}\label{fig:pins}
\end{figure}

\begin{figure}[H]
\begin{center}
\includegraphics[width=.325\textwidth]{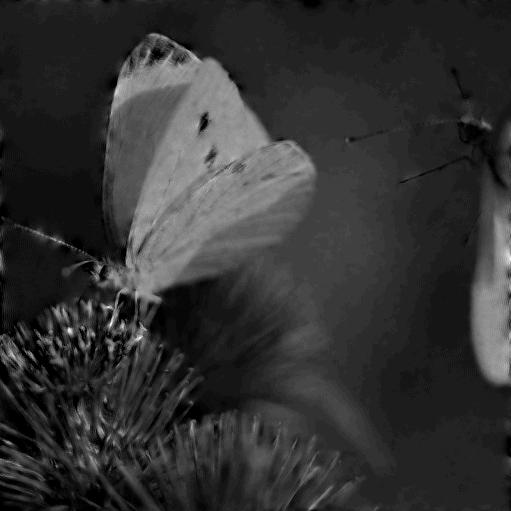}
\includegraphics[width=.325\textwidth]{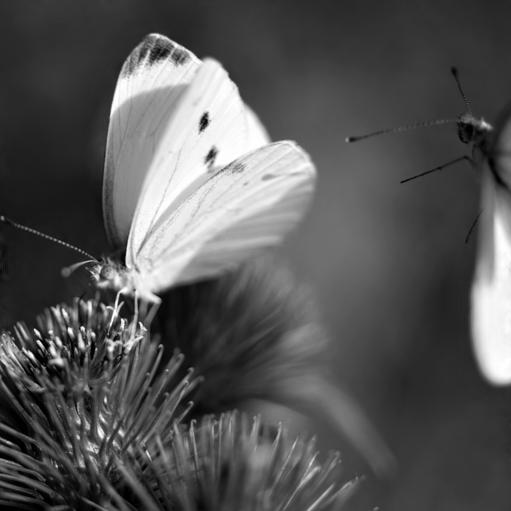}
\includegraphics[width=.33\textwidth]{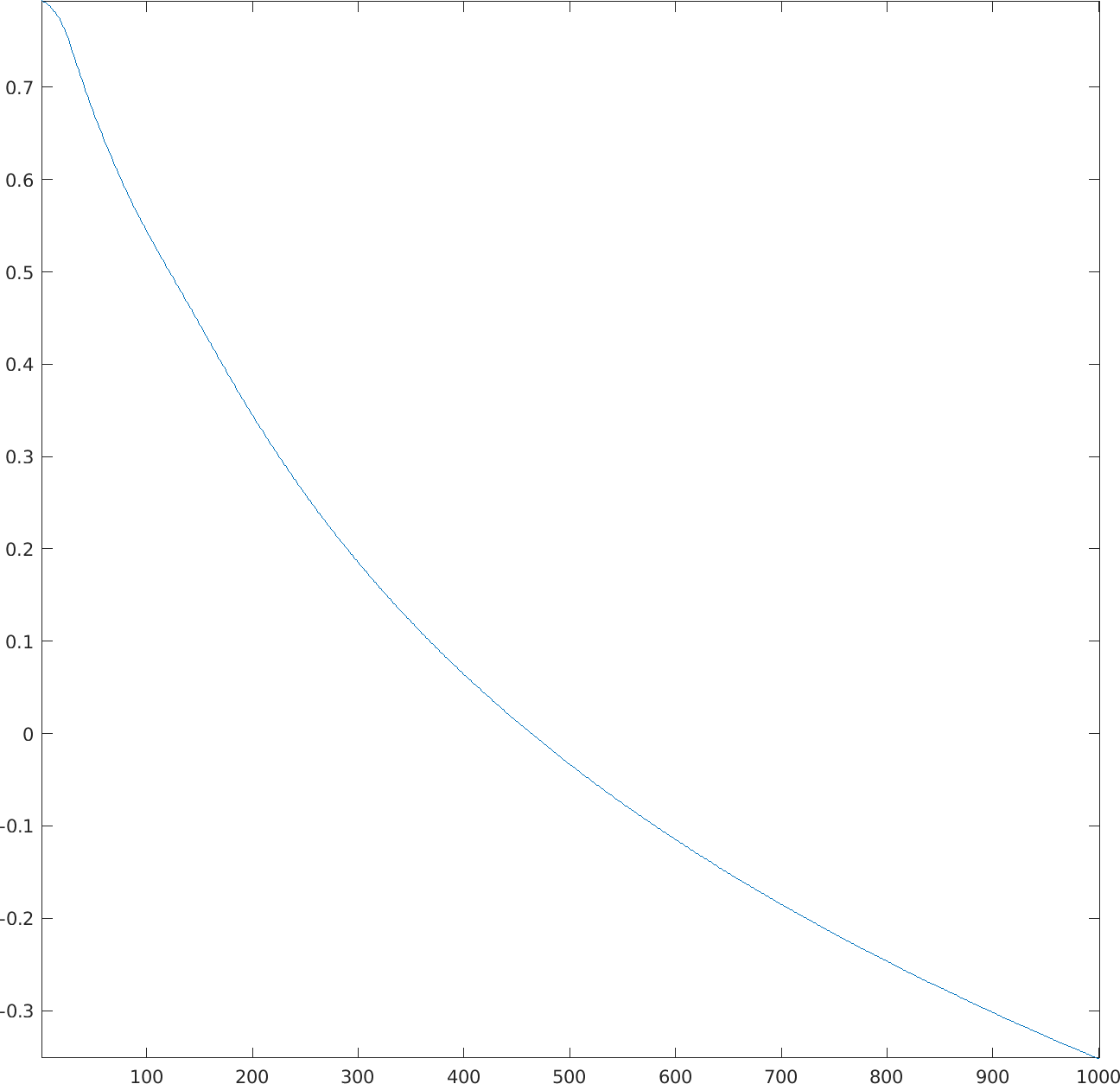}\vspace{4pt}\\
\includegraphics[width=.325\textwidth]{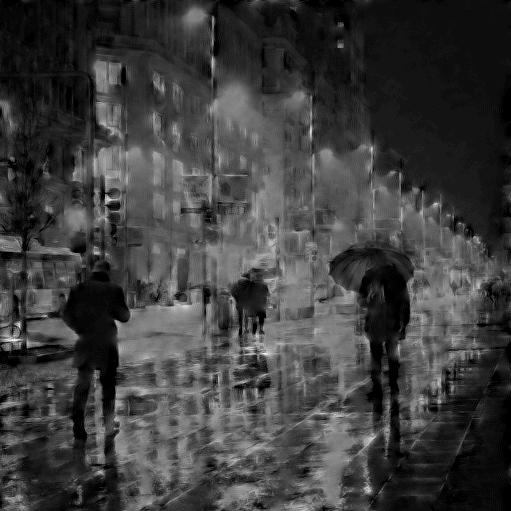}
\includegraphics[width=.325\textwidth]{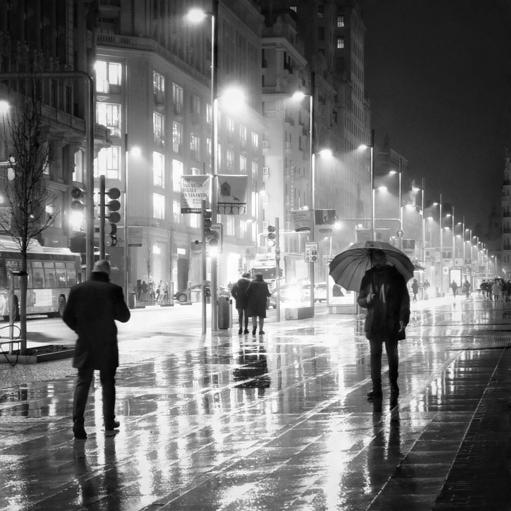}
\includegraphics[width=.33\textwidth]{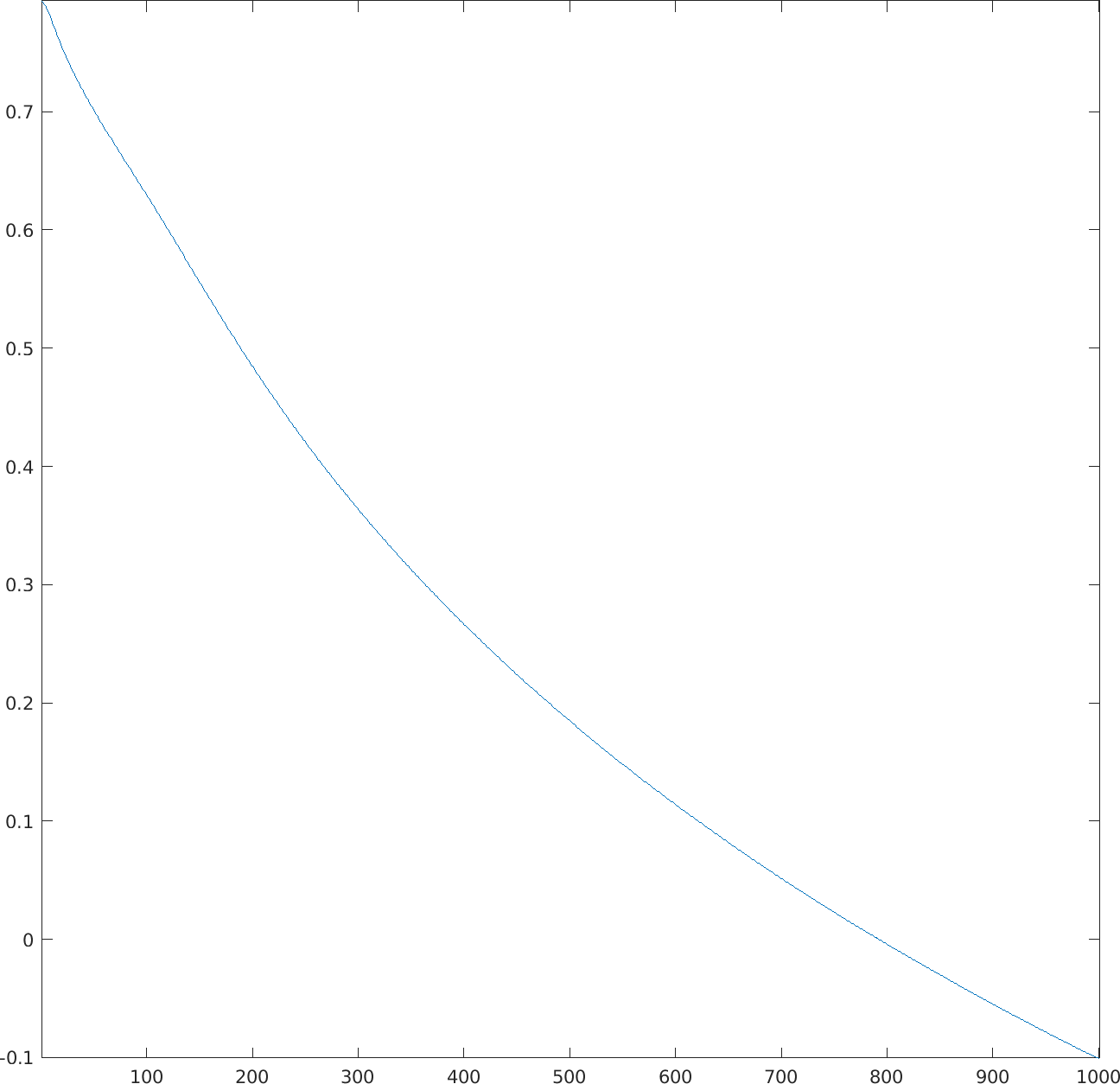}\vspace{4pt}\\
\includegraphics[width=.325\textwidth]{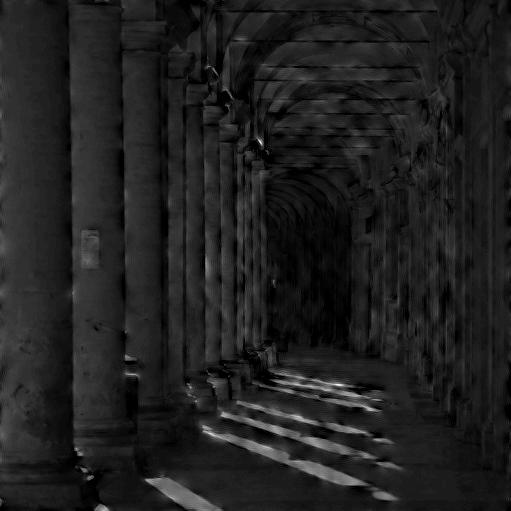}
\includegraphics[width=.325\textwidth]{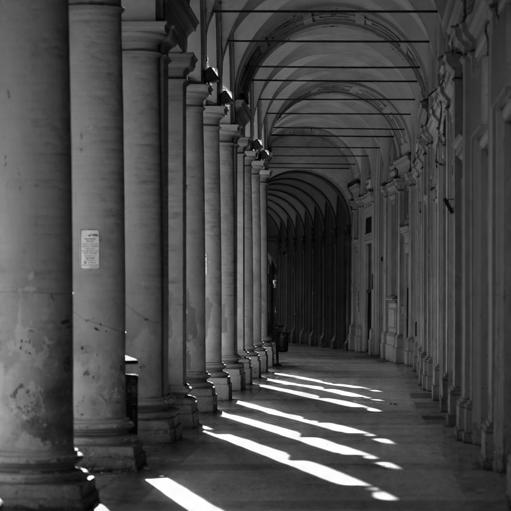}
\includegraphics[width=.33\textwidth]{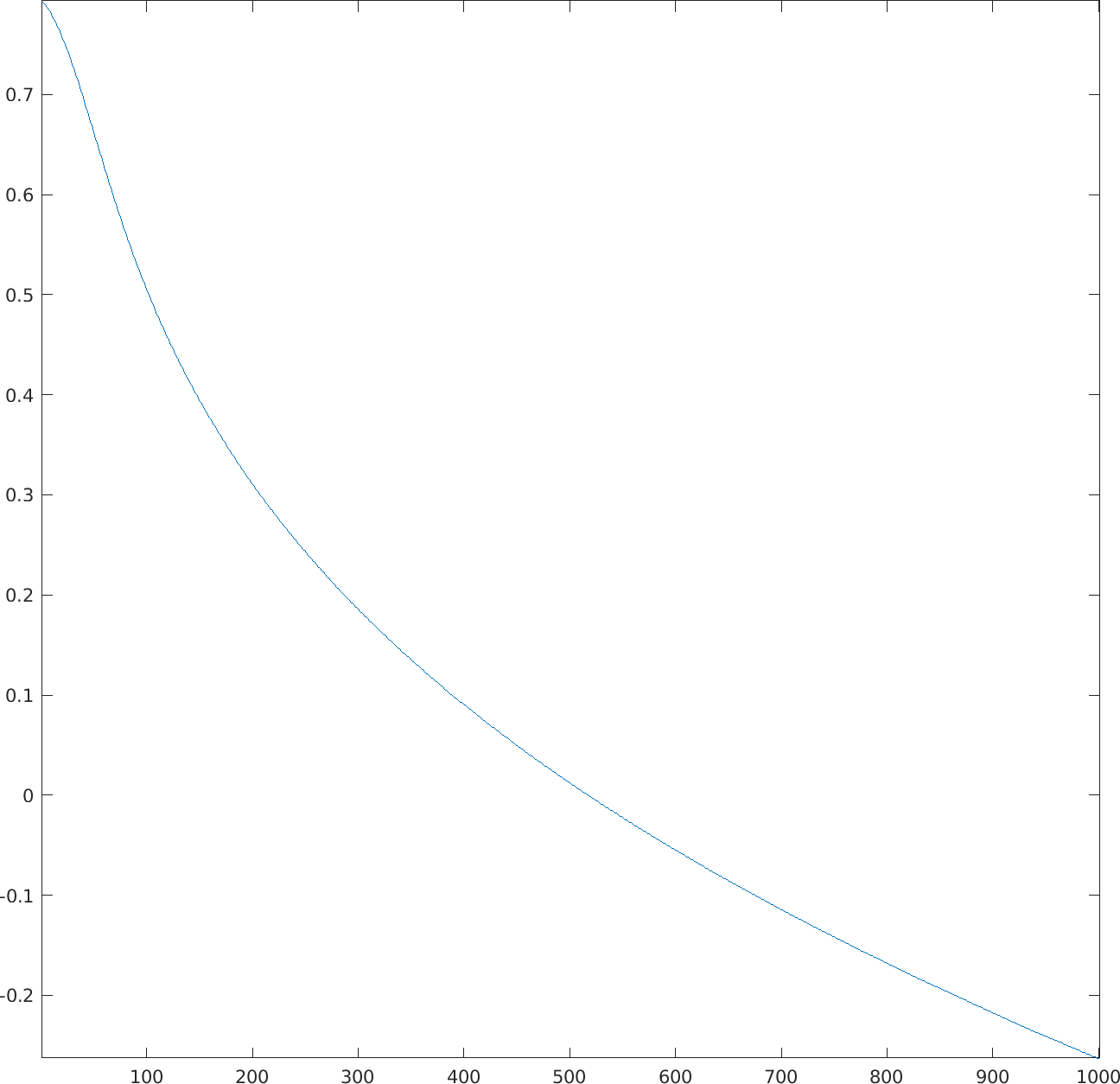}\vspace{4pt}\\
\includegraphics[width=.325\textwidth]{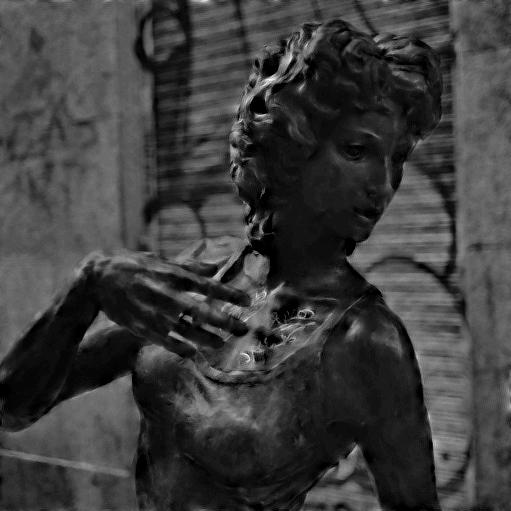}
\includegraphics[width=.325\textwidth]{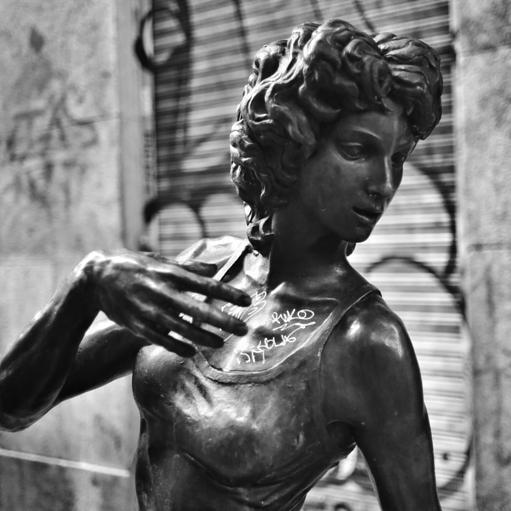}
\includegraphics[width=.33\textwidth]{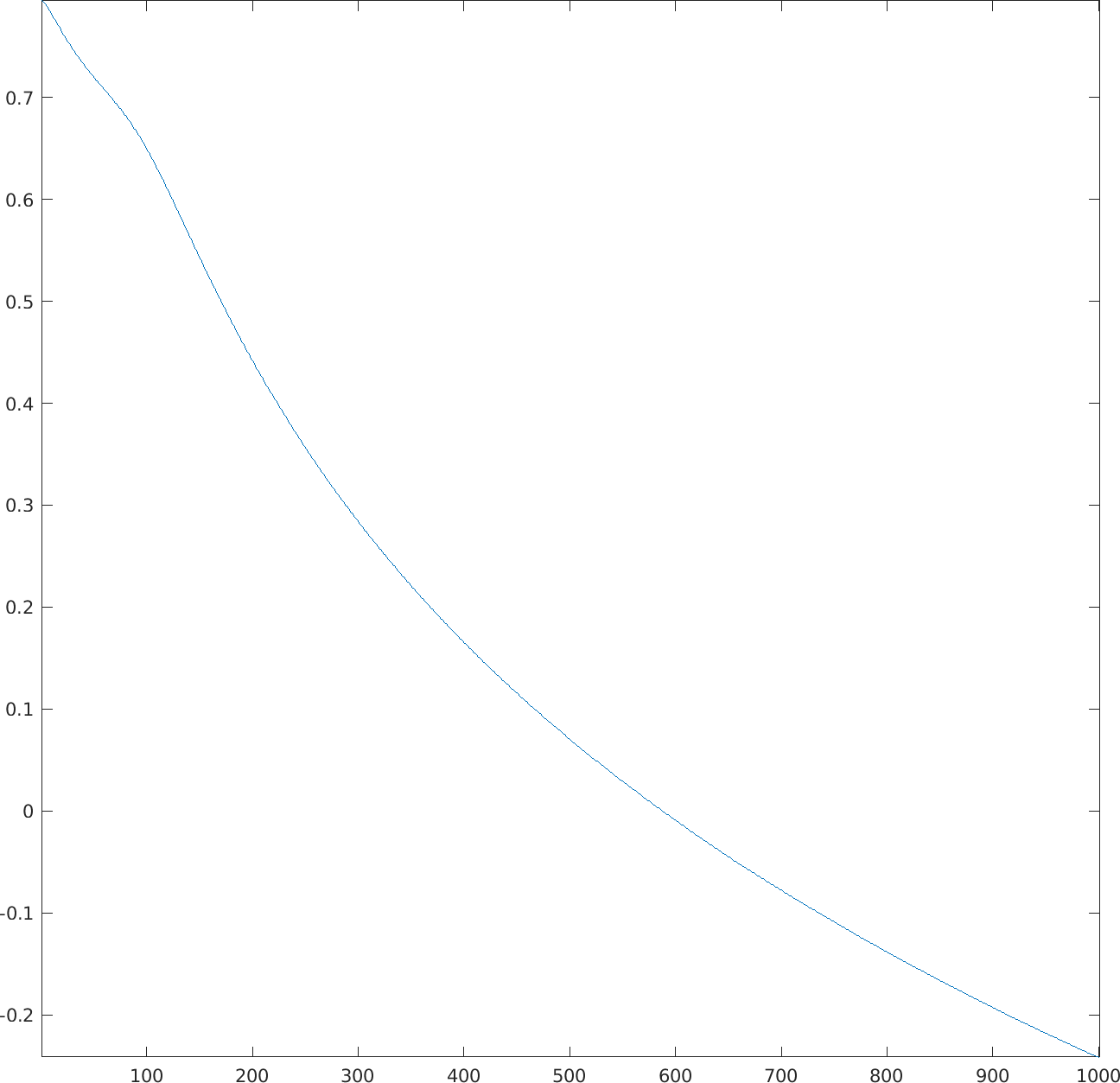}
\end{center}
\caption{Iteration on the images 1 to 4 of Figure \ref{fig:dataset} for the purely random $\Theta$ shown in the first line of Figure \ref{fig:pins}. Left: first step of the iteration. Center: after 1000 iterations. Right: $\log_{10}(\Delta_n)$, for the error \eqref{eq:DELTA}.}\label{fig:S1}
\end{figure}

\newpage

\begin{figure}[H]
\begin{center}
\includegraphics[width=.325\textwidth]{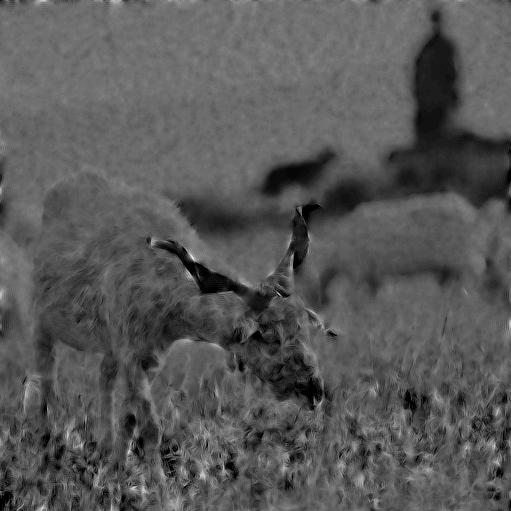}
\includegraphics[width=.325\textwidth]{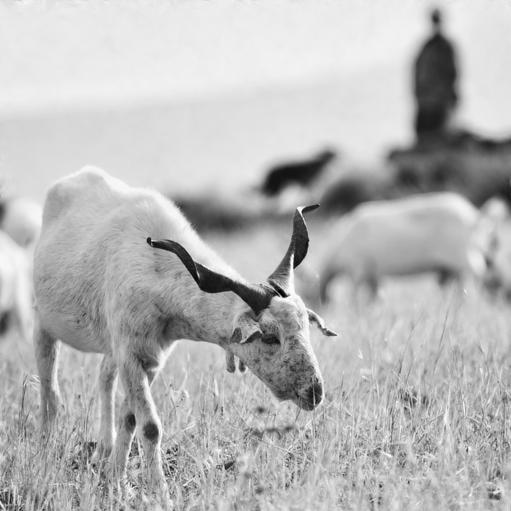}
\includegraphics[width=.33\textwidth]{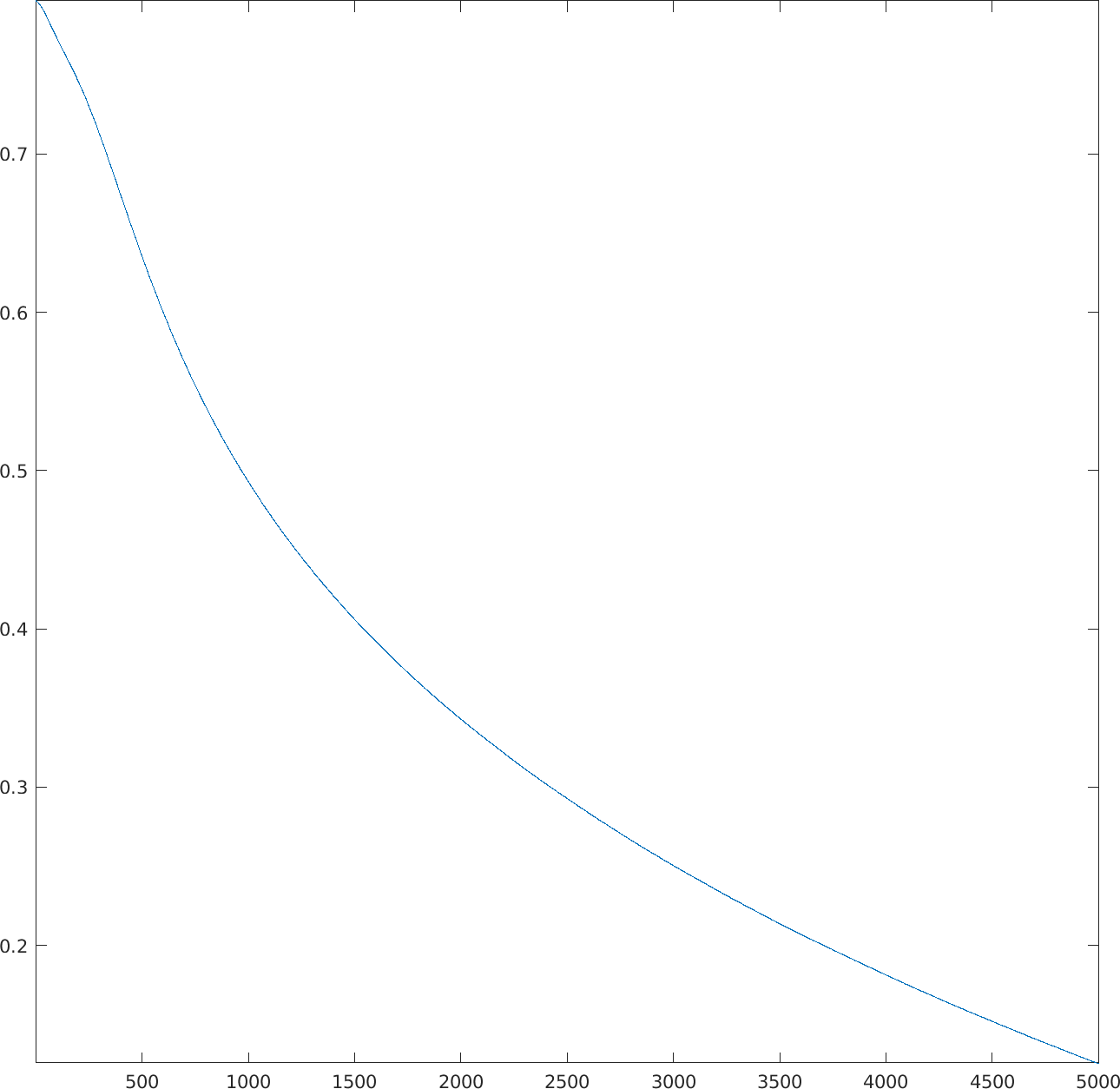}\vspace{4pt}\\
\includegraphics[width=.325\textwidth]{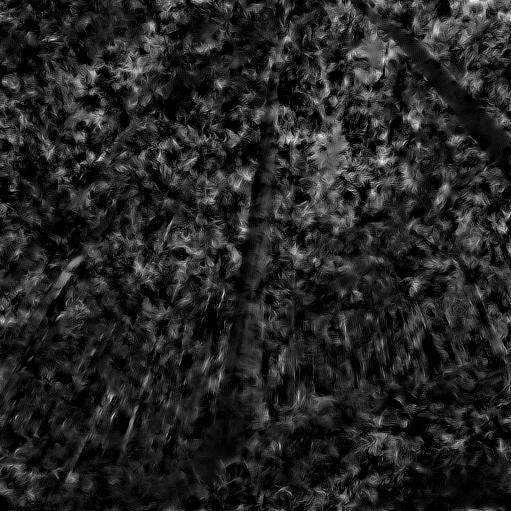}
\includegraphics[width=.325\textwidth]{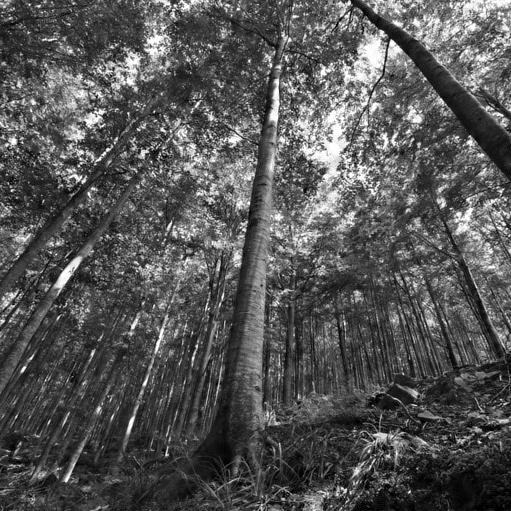}
\includegraphics[width=.33\textwidth]{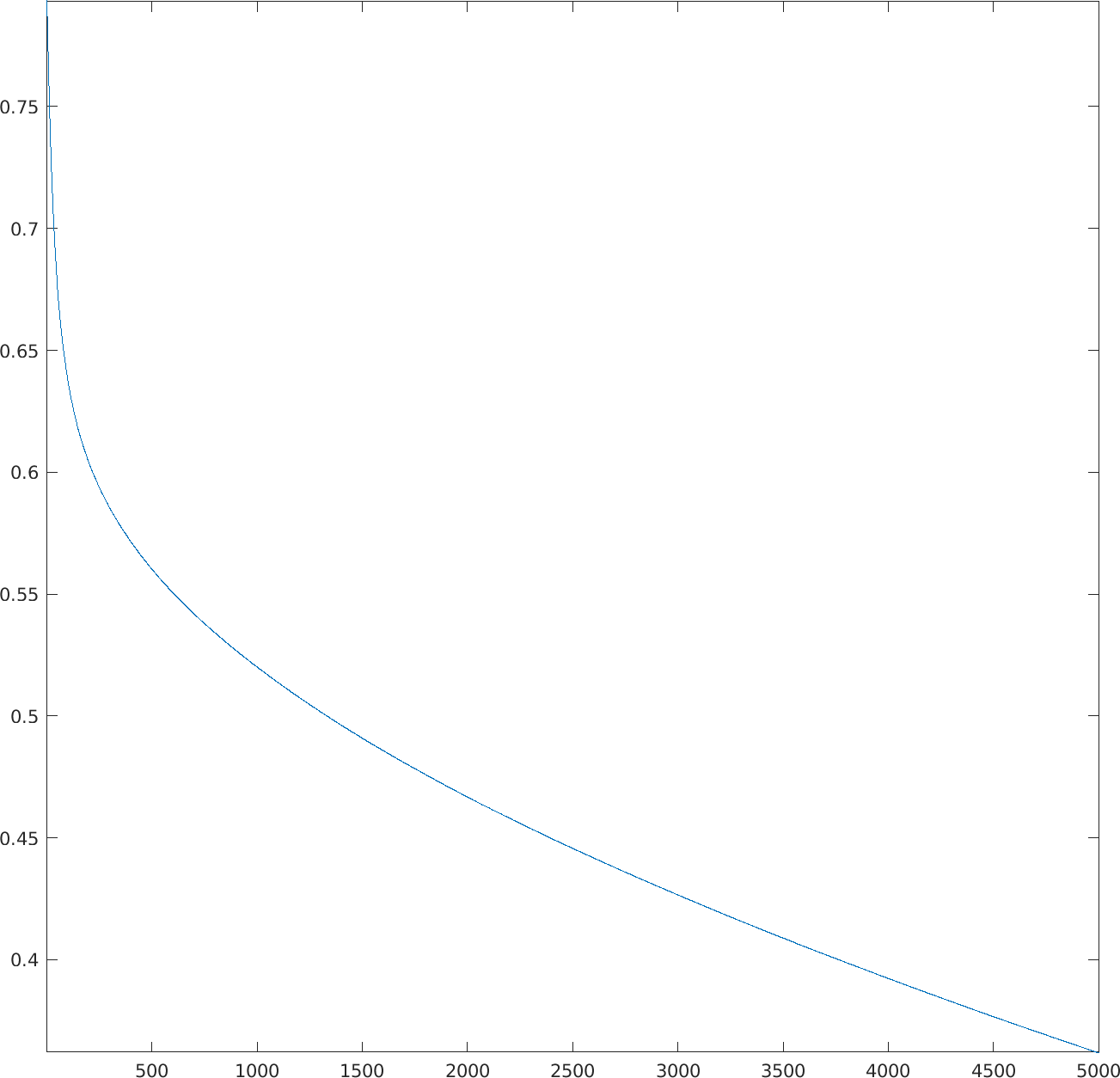}\vspace{4pt}\\
\includegraphics[width=.325\textwidth]{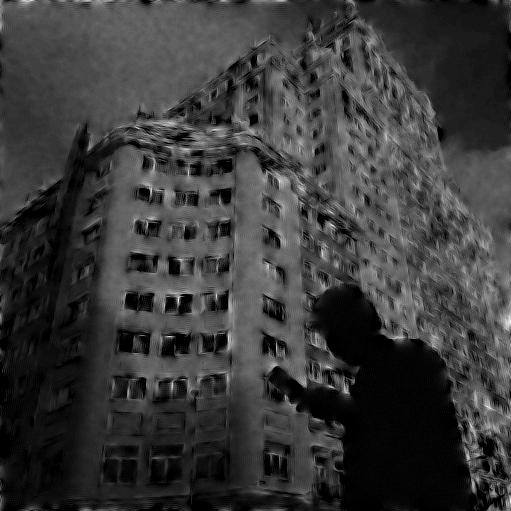}
\includegraphics[width=.325\textwidth]{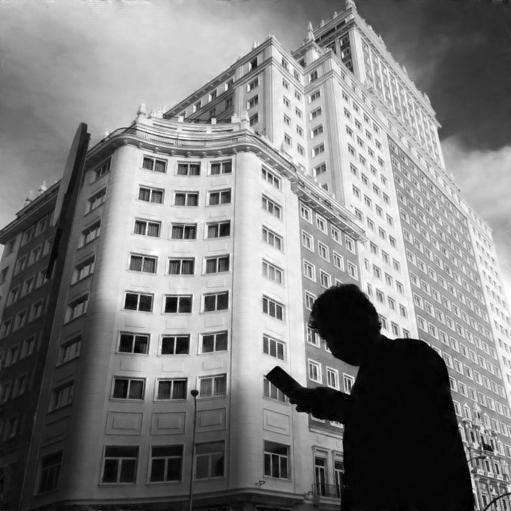}
\includegraphics[width=.33\textwidth]{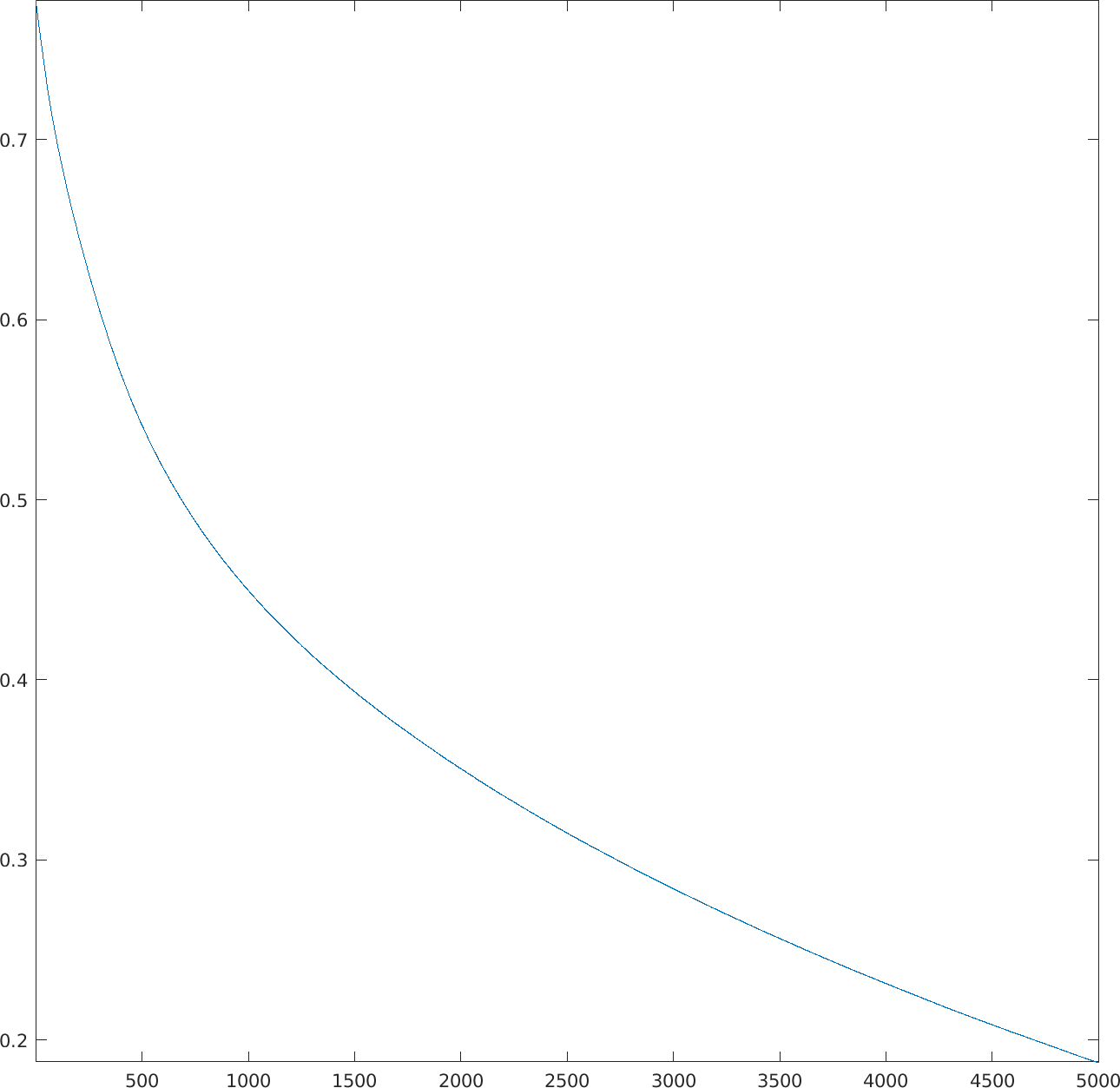}\vspace{4pt}\\
\includegraphics[width=.325\textwidth]{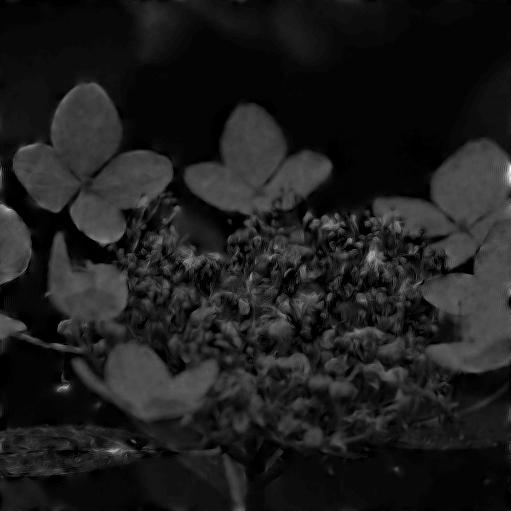}
\includegraphics[width=.325\textwidth]{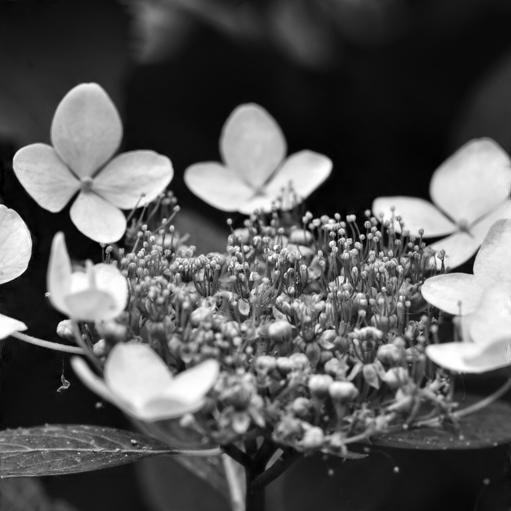}
\includegraphics[width=.33\textwidth]{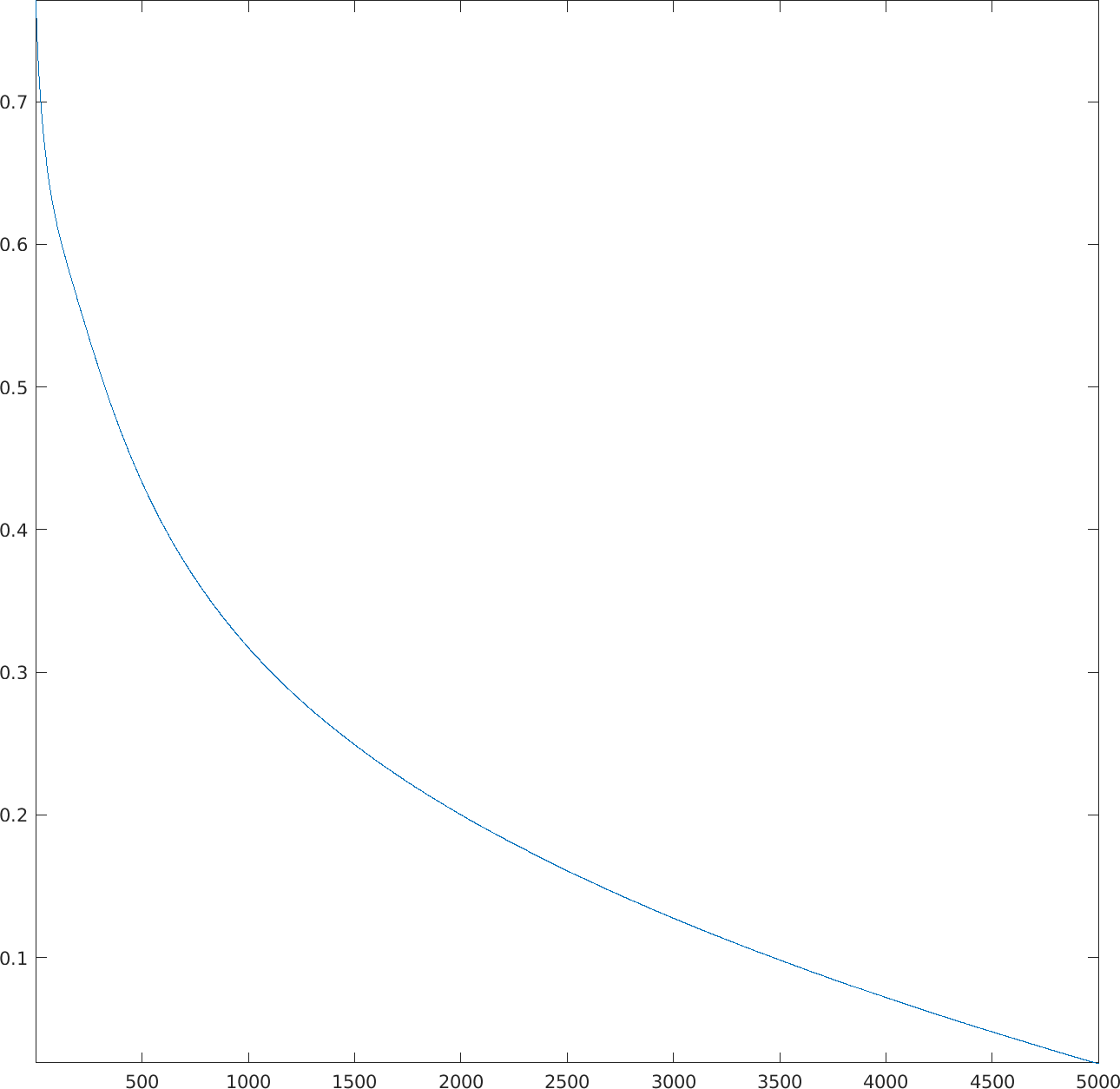}
\end{center}
\caption{Iteration on the images 5 to 8 of Figure \ref{fig:dataset} for $\Theta_{0.8}$ shown in the second line of Figure \ref{fig:pins}. Left: first step of the iteration. Center: after 5000 iterations. Right: $\log_{10}(\Delta_n)$, for the error \eqref{eq:DELTA}.}\label{fig:S2}
\end{figure}

\newpage

\begin{figure}[H]
\begin{center}
\includegraphics[width=.325\textwidth]{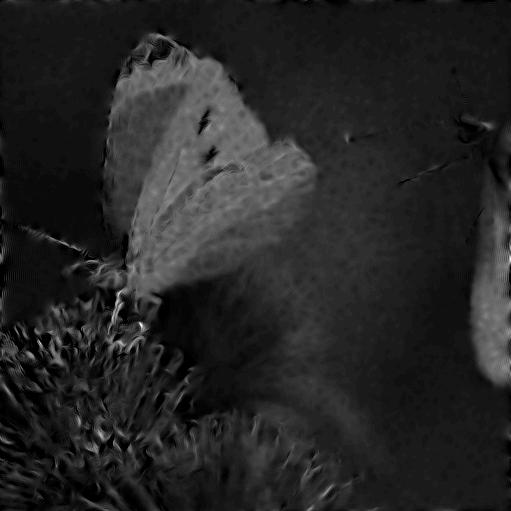}
\includegraphics[width=.325\textwidth]{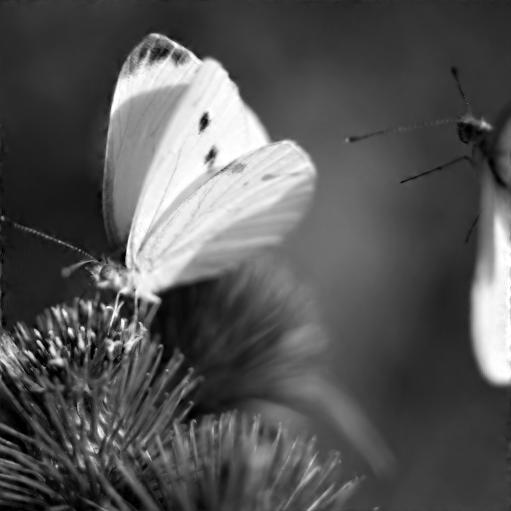}
\includegraphics[width=.33\textwidth]{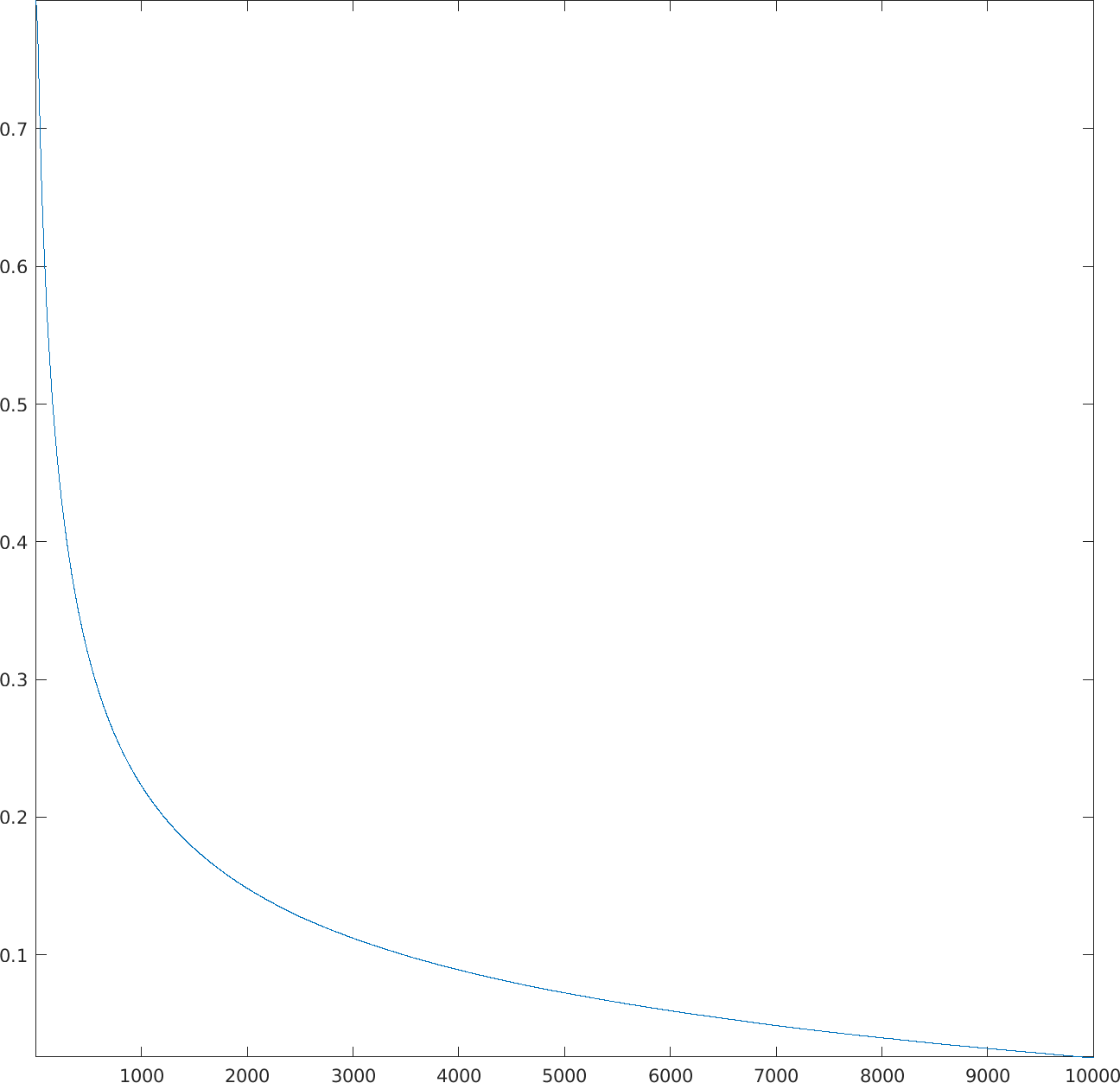}\vspace{4pt}\\
\includegraphics[width=.325\textwidth]{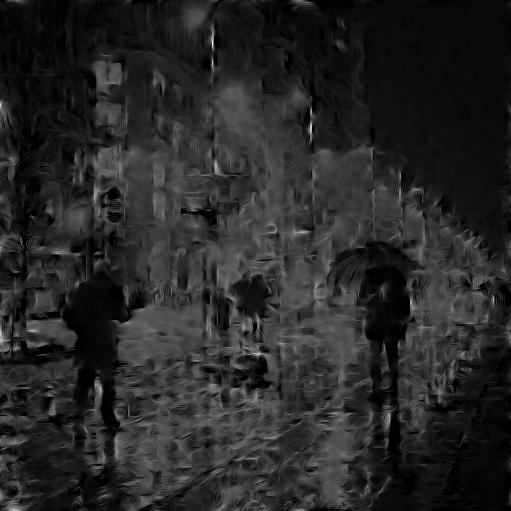}
\includegraphics[width=.325\textwidth]{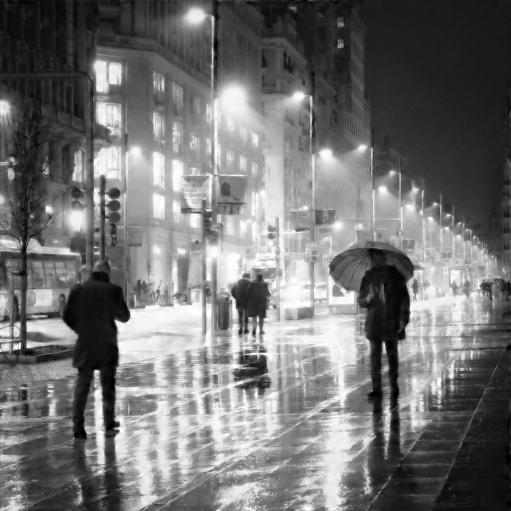}
\includegraphics[width=.33\textwidth]{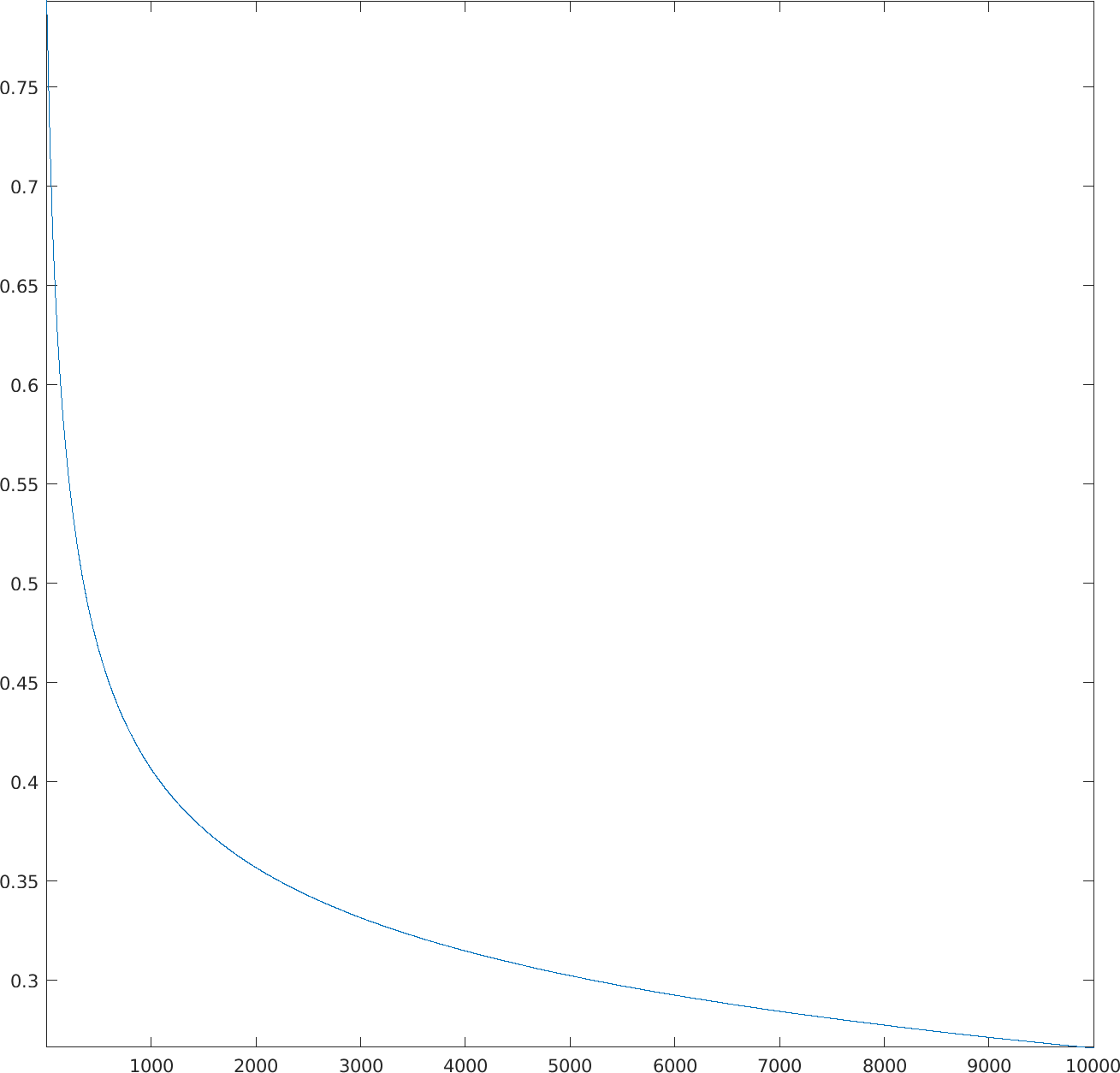}\vspace{4pt}\\
\includegraphics[width=.325\textwidth]{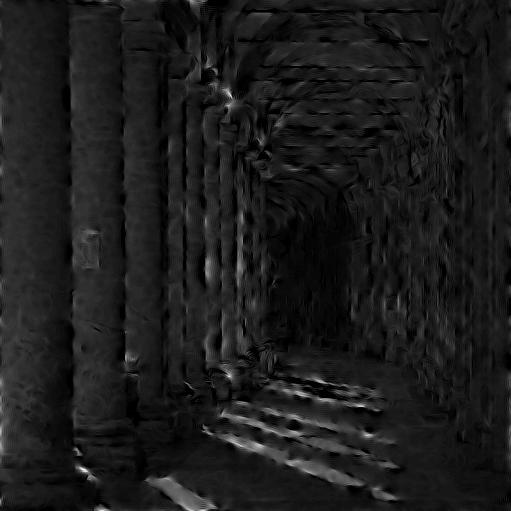}
\includegraphics[width=.325\textwidth]{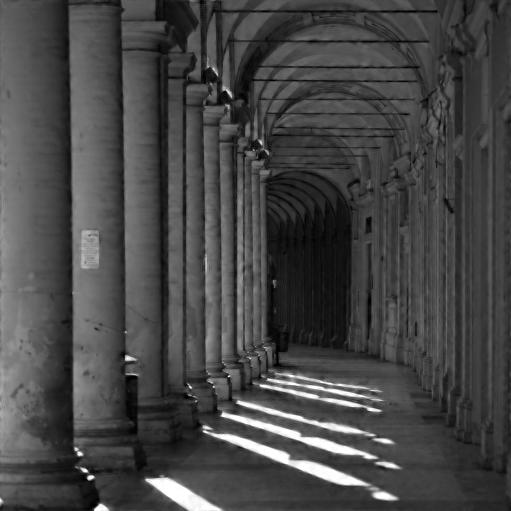}
\includegraphics[width=.33\textwidth]{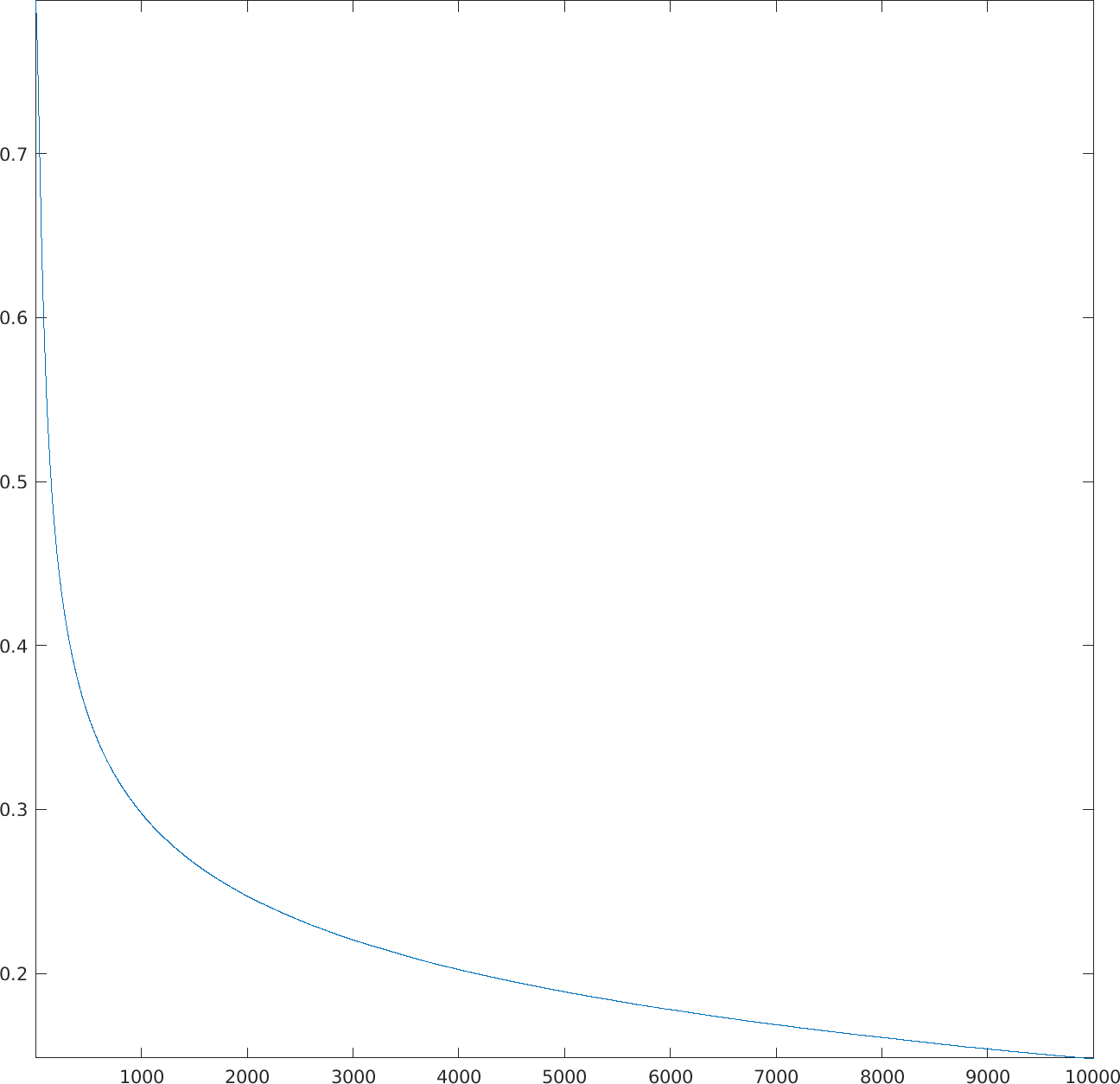}\vspace{4pt}\\
\includegraphics[width=.325\textwidth]{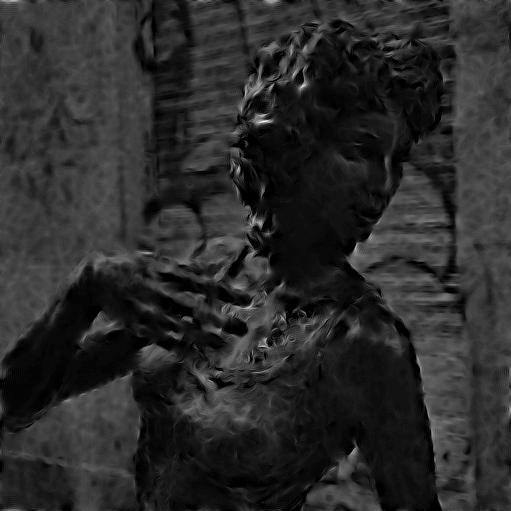}
\includegraphics[width=.325\textwidth]{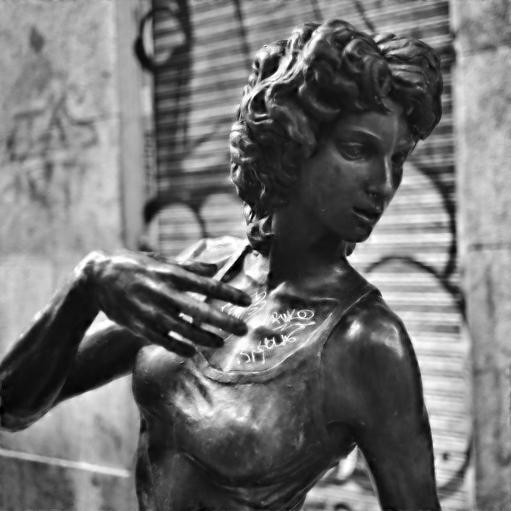}
\includegraphics[width=.33\textwidth]{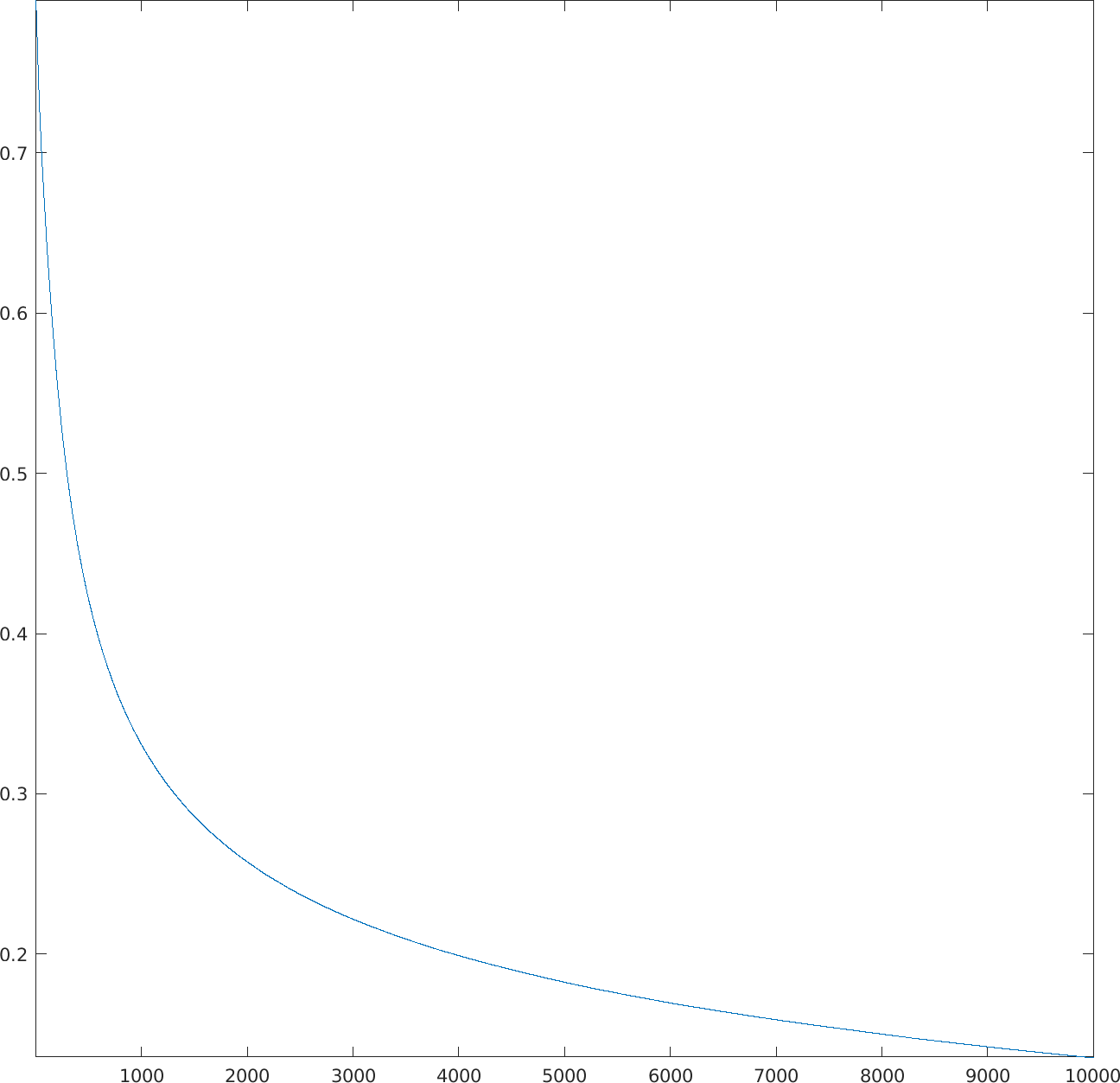}
\end{center}
\caption{Iteration on the images 1 to 4 of Figure \ref{fig:dataset} for $\Theta_{0.4}$ shown in the third line of Figure \ref{fig:pins}. Left: first step of the iteration. Center: after 10000 iterations. Right: $\log_{10}(\Delta_n)$, for the error \eqref{eq:DELTA}.}\label{fig:S3}
\end{figure}

\newpage

\begin{figure}[H]
\begin{center}
\includegraphics[width=.325\textwidth]{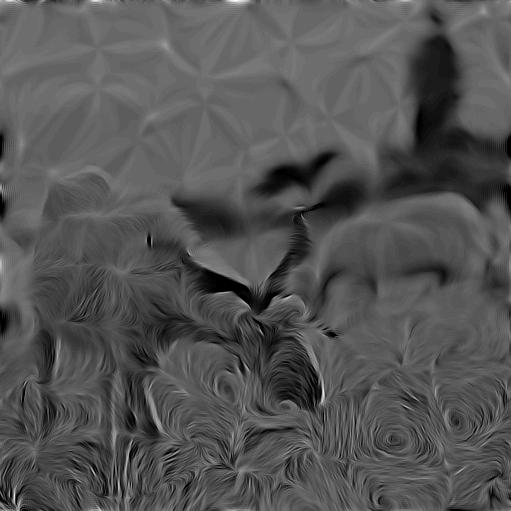}
\includegraphics[width=.325\textwidth]{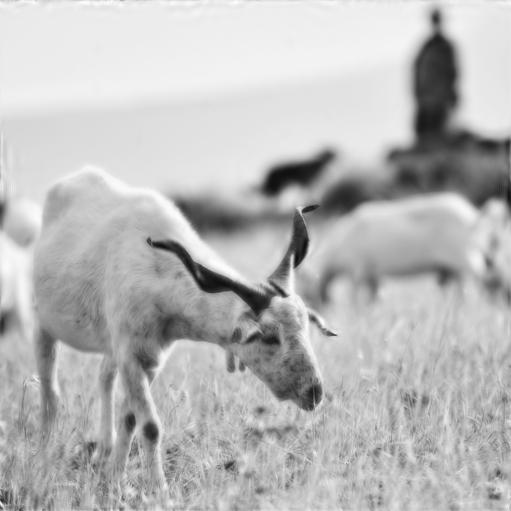}
\includegraphics[width=.33\textwidth]{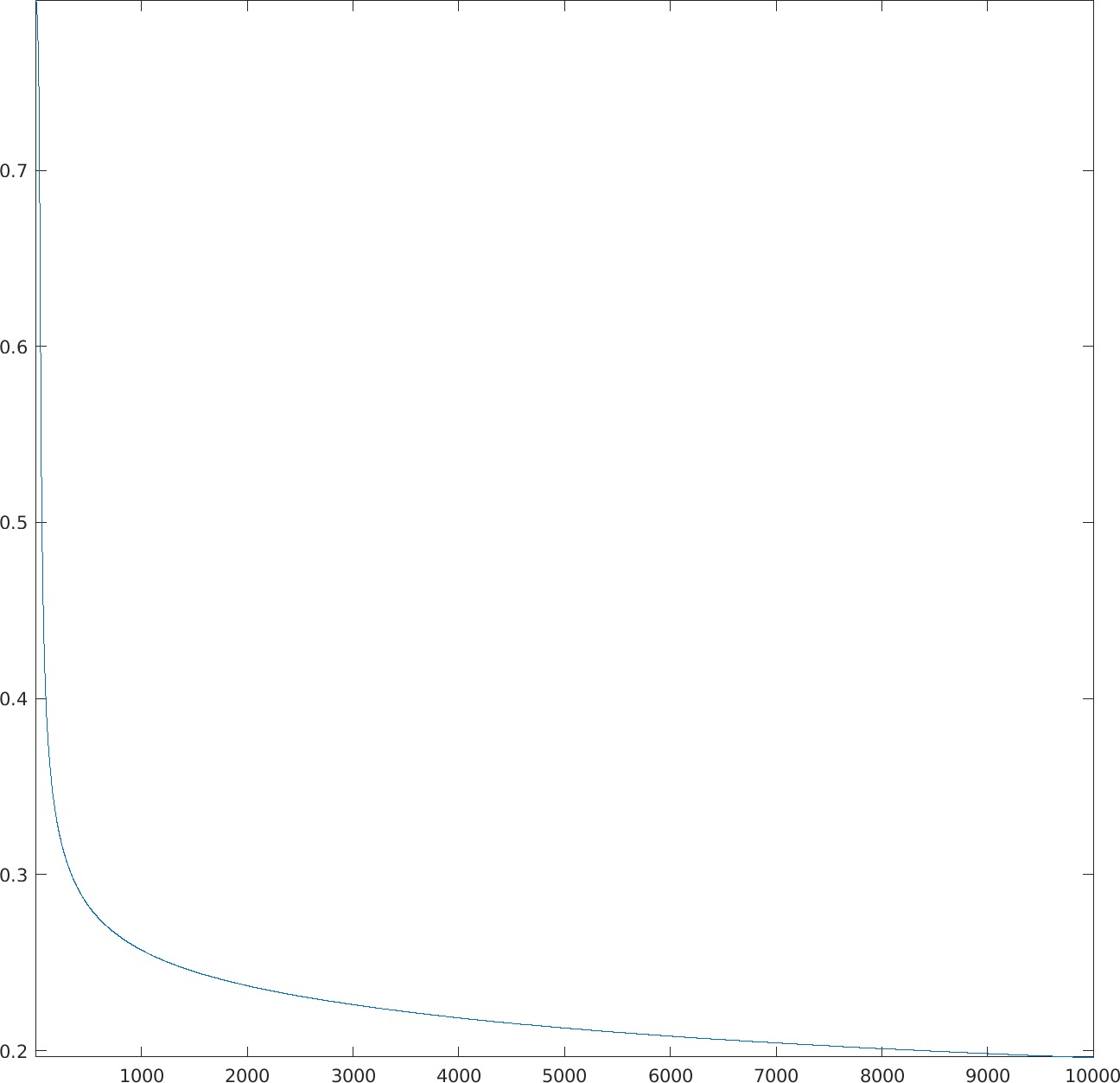}\vspace{4pt}\\
\includegraphics[width=.325\textwidth]{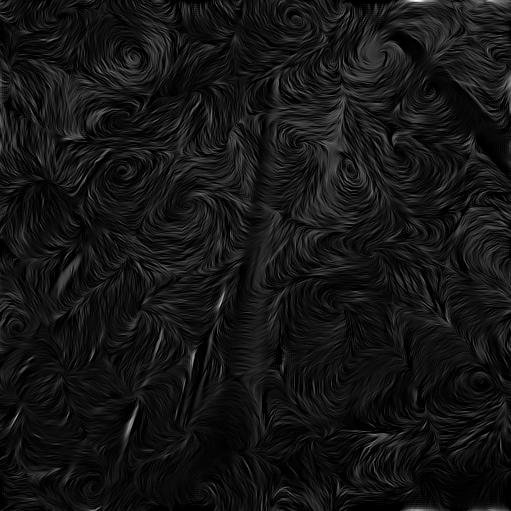}
\includegraphics[width=.325\textwidth]{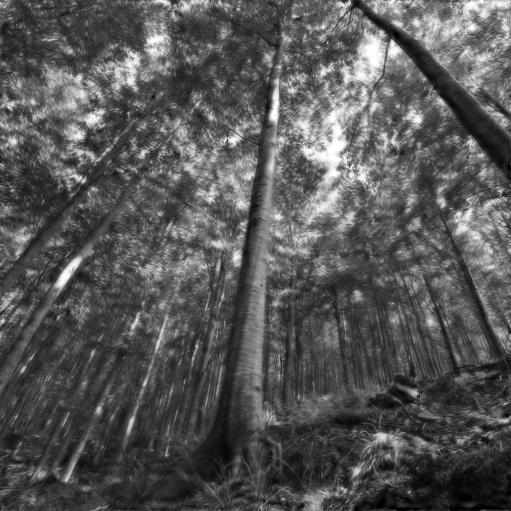}
\includegraphics[width=.33\textwidth]{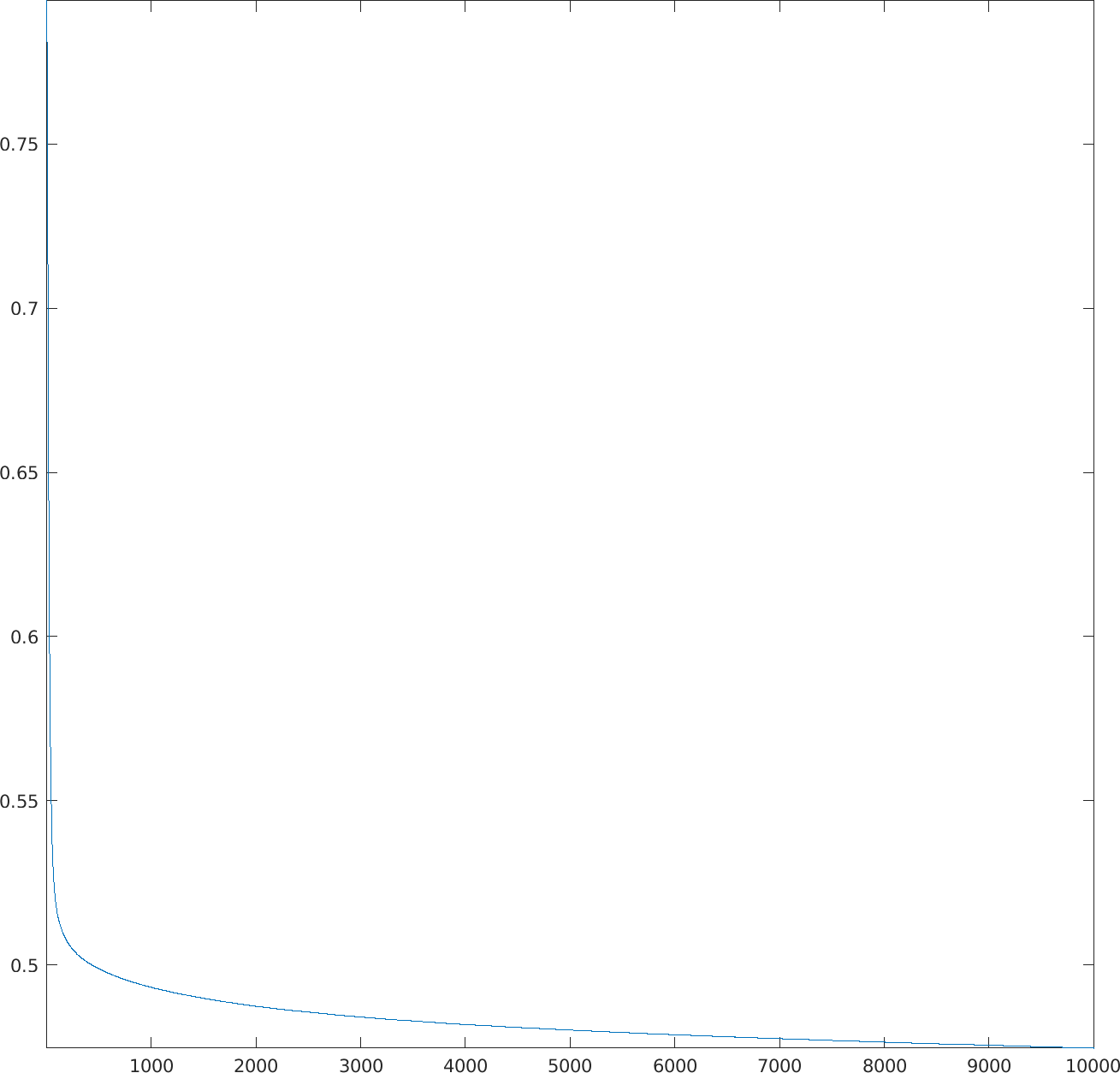}\vspace{4pt}\\
\includegraphics[width=.325\textwidth]{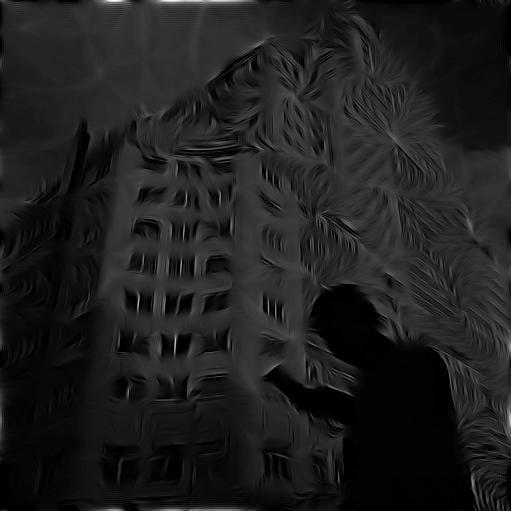}
\includegraphics[width=.325\textwidth]{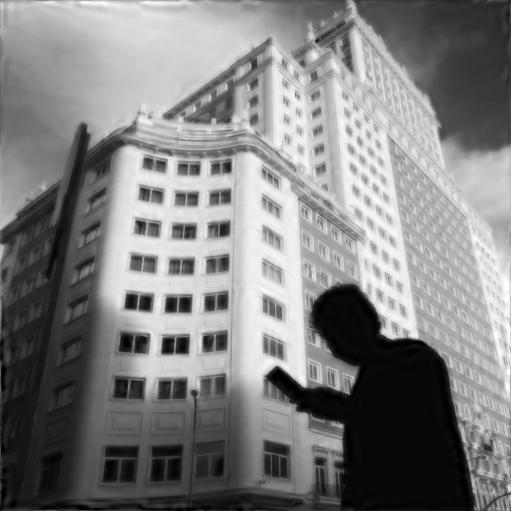}
\includegraphics[width=.33\textwidth]{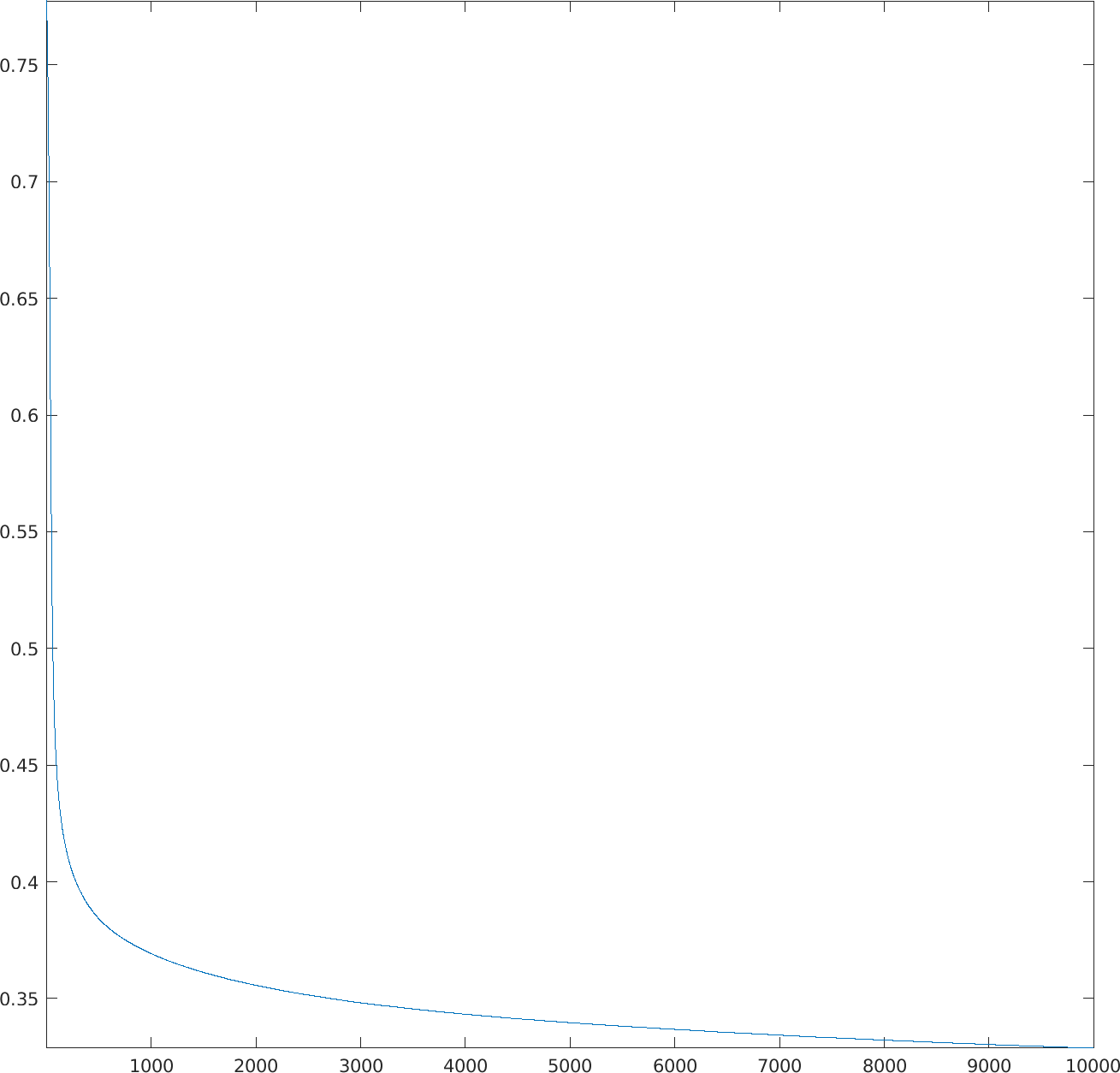}\vspace{4pt}\\
\includegraphics[width=.325\textwidth]{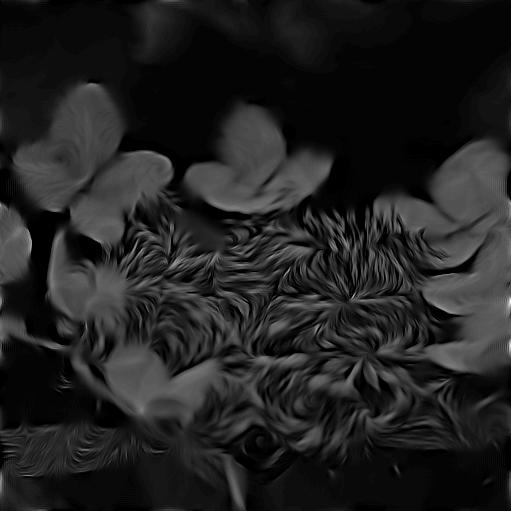}
\includegraphics[width=.325\textwidth]{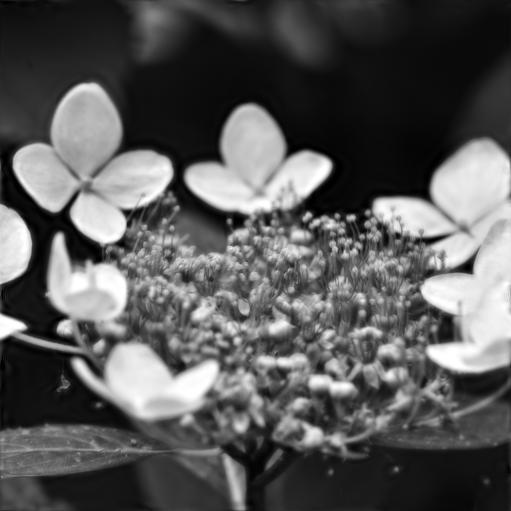}
\includegraphics[width=.33\textwidth]{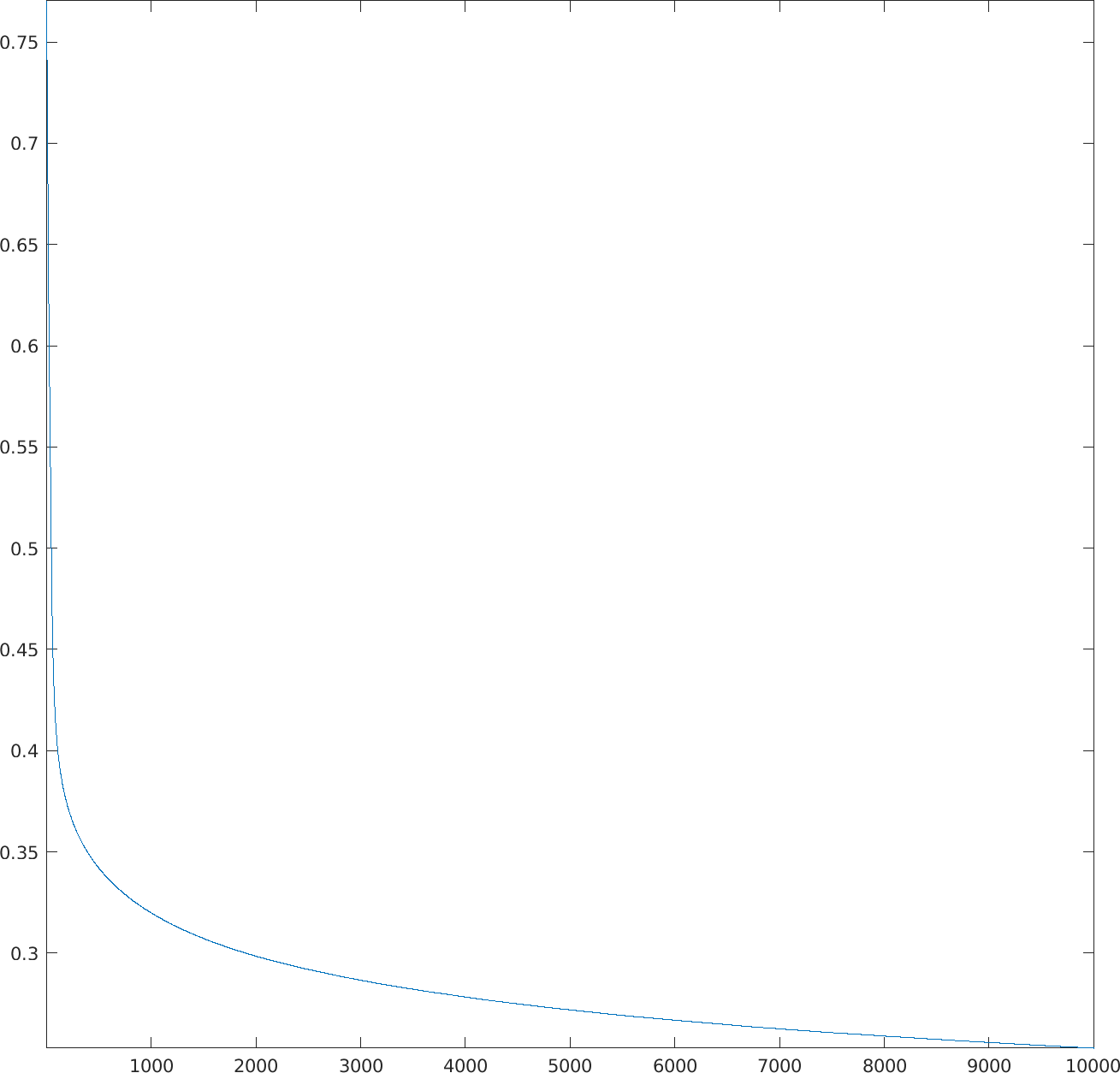}
\end{center}
\caption{Iteration on the images 5 to 8 of Figure \ref{fig:dataset} for $\Theta_{0.06}$, shown in the fourth line of Figure \ref{fig:pins}. Left: first step of the iteration. Center: after 10000 iterations. Right: $\log_{10}(\Delta_n)$, for the error \eqref{eq:DELTA}.}\label{fig:S4}
\end{figure}

\newpage

\begin{figure}[H]
\begin{center}
\includegraphics[width=.325\textwidth]{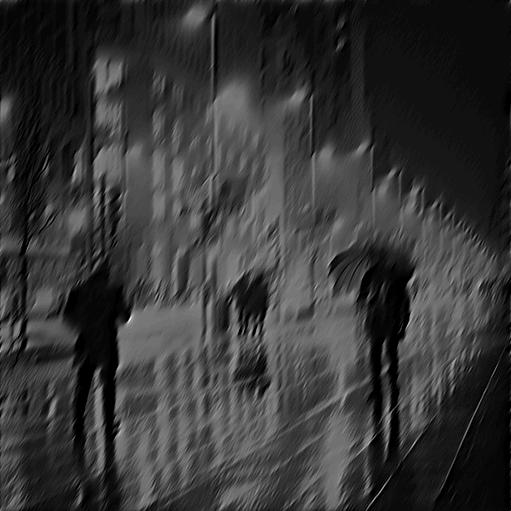}
\includegraphics[width=.325\textwidth]{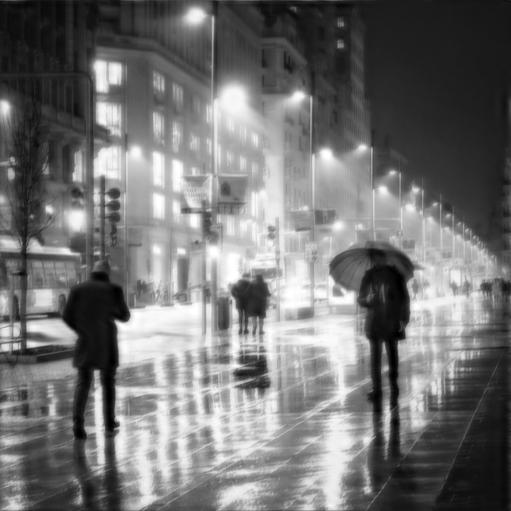}
\includegraphics[width=.33\textwidth]{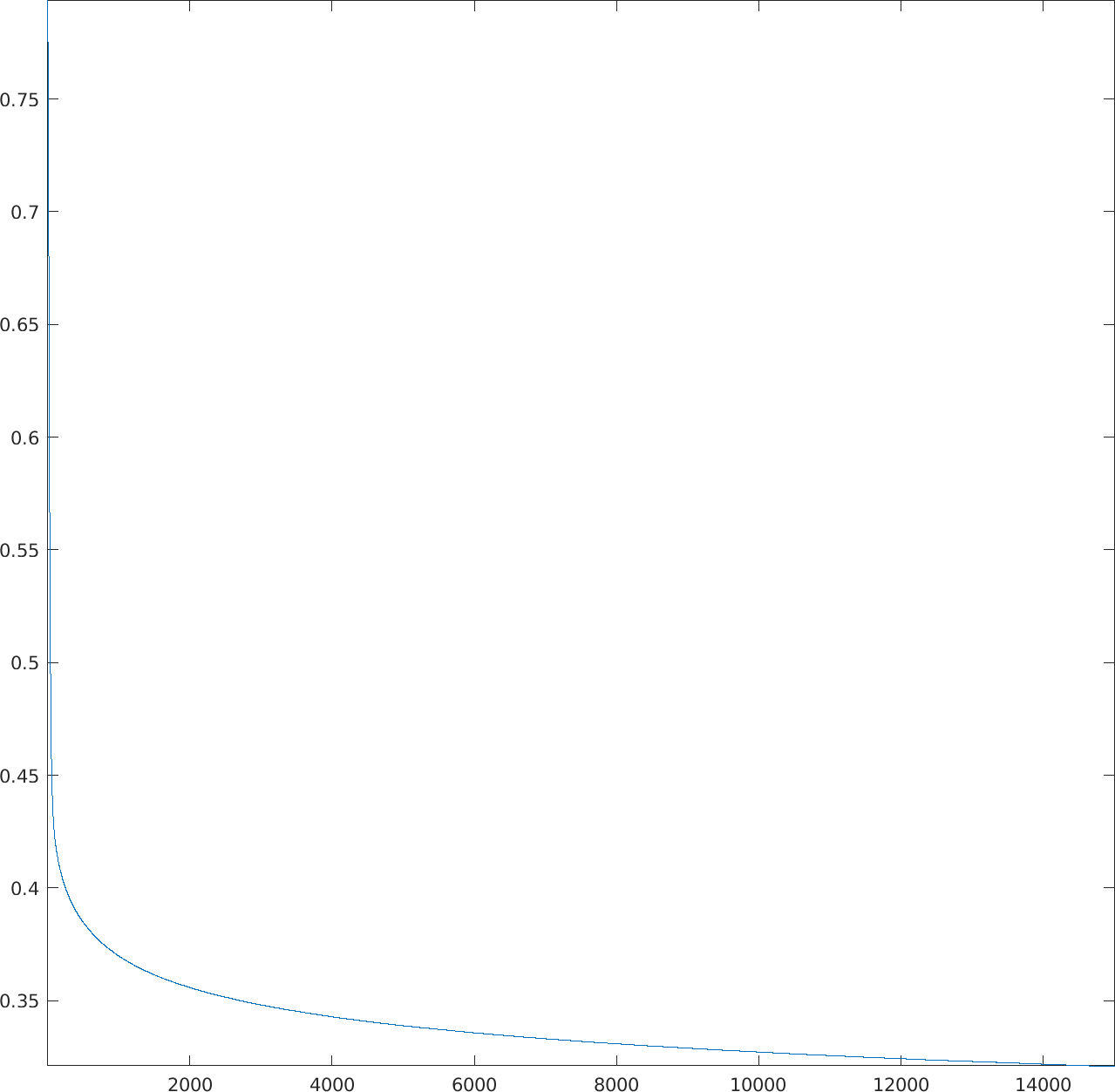}\vspace{4pt}\\
\includegraphics[width=.325\textwidth]{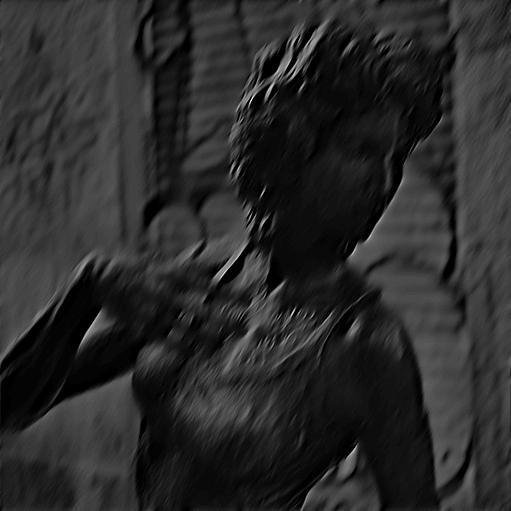}
\includegraphics[width=.325\textwidth]{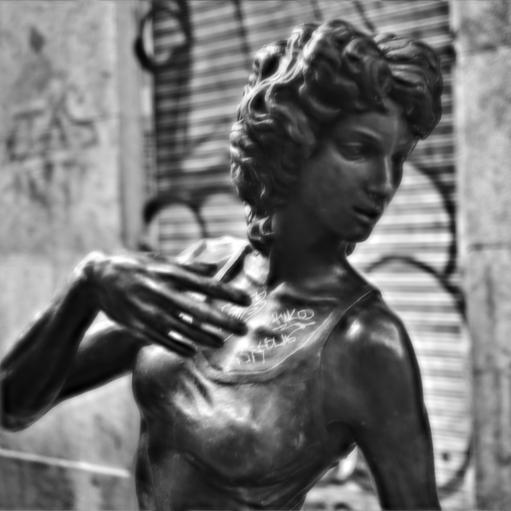}
\includegraphics[width=.33\textwidth]{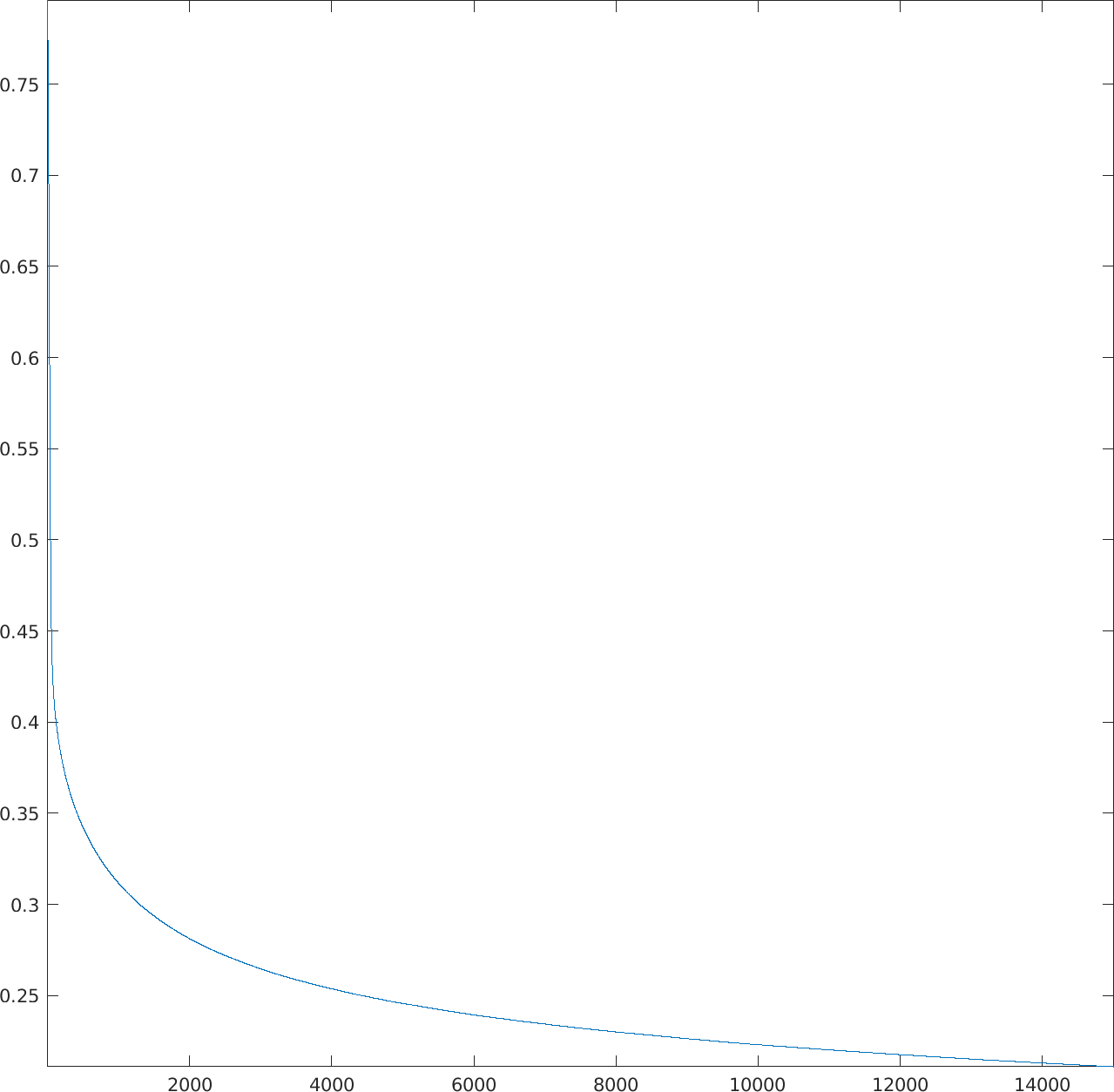}\vspace{4pt}\\
\includegraphics[width=.325\textwidth]{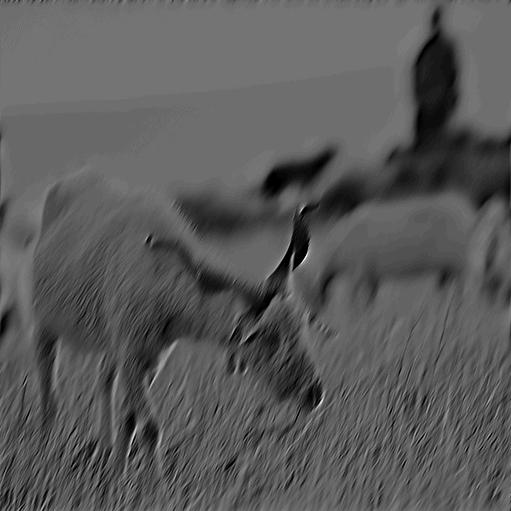}
\includegraphics[width=.325\textwidth]{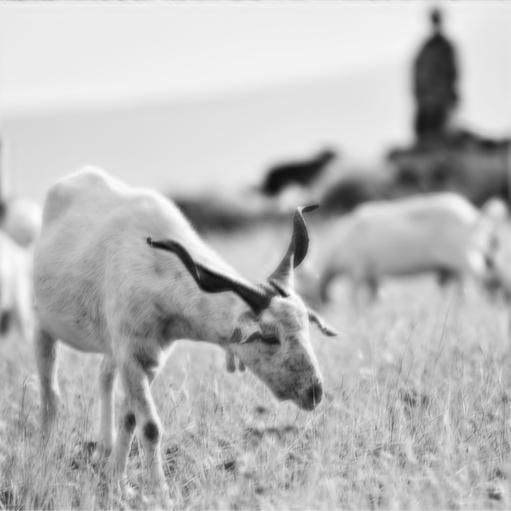}
\includegraphics[width=.33\textwidth]{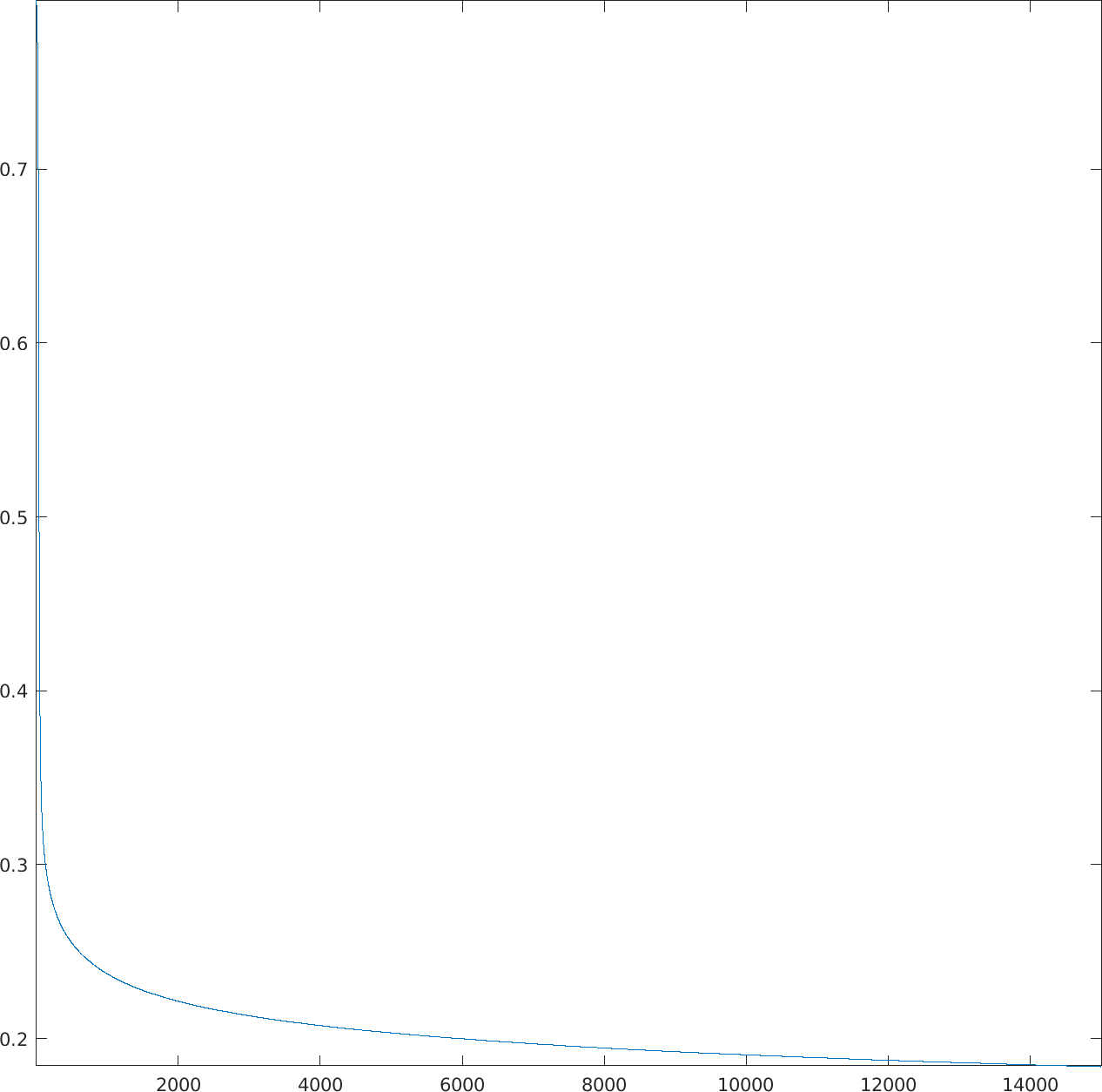}\vspace{4pt}\\
\includegraphics[width=.325\textwidth]{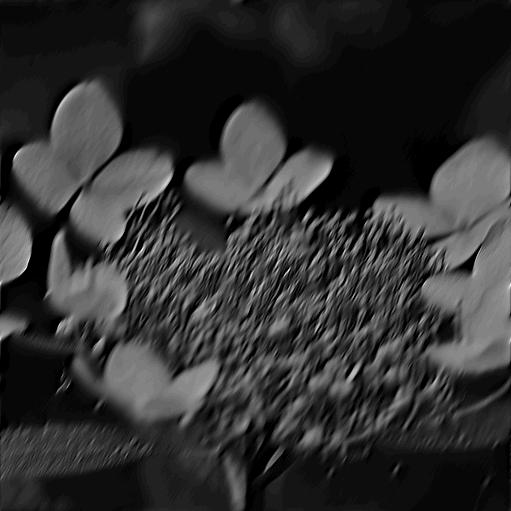}
\includegraphics[width=.325\textwidth]{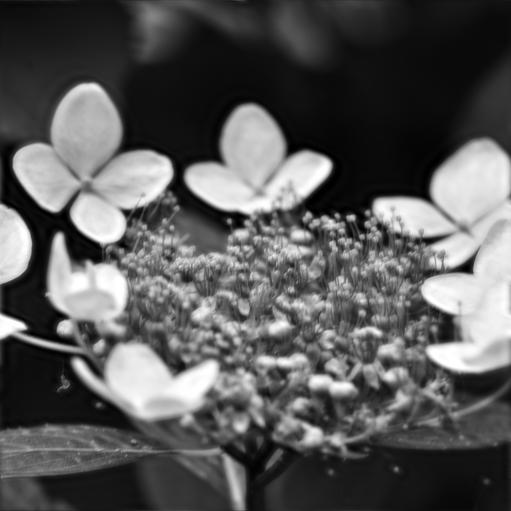}
\includegraphics[width=.33\textwidth]{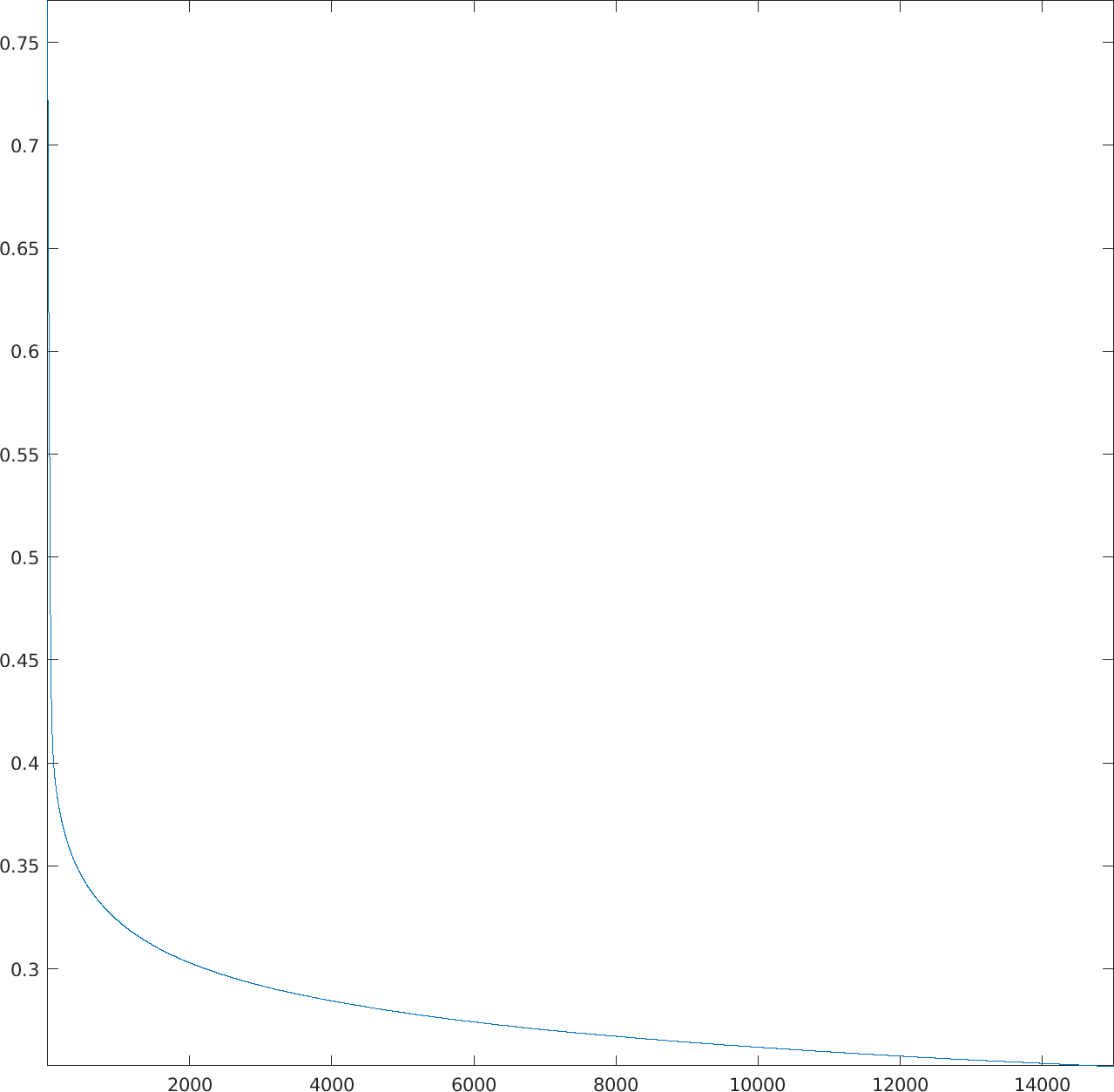}
\end{center}
\caption{Iteration for the $SE(2)$ deconvolution by $\psi$ on images of Figure \ref{fig:dataset}. Left: first step of the iteration. Center: after 15000 iterations. Right: $\log_{10}(\Delta_n)$, for the error \eqref{eq:DELTA}.}\label{fig:S5}
\end{figure}

\newpage

\bibliographystyle{plain}
\bibliography{frontiers.bib}

\newpage

\section*{Funding}
This project has received funding from the European Union's Horizon 2020 research and innovation programme under the Marie Sk\l odowska-Curie grant agreement No 777822, and from Grant PID2019-105599GB-I00, Ministerio de Ciencia, Innovaci\'on y Universidades, Spain.

\section*{Acknowledgments}
The author would like to thank G. Citti, A. Sarti, E. Hern\'andez, J. Antezana and F. Anselmi for inspiring discussion on topics related to the present work.

\section*{Address}
Davide Barbieri, Universidad Aut\'onoma de Madrid\\
\tt{davide.barbieri@uam.es}

\end{document}